\providecommand{\tabularnewline}{\\}
\providecommand{\algorithmname}{Algorithm}
\newtheorem{assume}{Assumption}
\begin{document}
\title{Sparse Semidefinite Programs with Guaranteed Near-Linear Time Complexity
via Dualized Clique Tree Conversion\thanks{This work was supported by the ONR YIP Award, DARPA YFA Award, AFOSR
YIP Award, NSF CAREER Award, and ONR N000141712933.}}
\titlerunning{Sparse SDPs with Guaranteed Near-Linear Time Complexity via Dualized
CTC}
\author{Richard Y. Zhang \and Javad Lavaei}
\institute{R. Y. Zhang \at Dept. of Industrial Engineering and Operations Research\\
University of California, Berkeley\\
Berkeley, CA 94720, USA\\
\emph{Present address:} Dept. of Electrical and Computer Engineering\\
University of Illinois at Urbana-Champaign\\
306 N Wright St, Urbana, IL 61801\\
\email{ryz@illinois.edu}\\
\and J. Lavaei \at Dept. of Industrial Engineering and Operations
Research\\
University of California, Berkeley\\
Berkeley, CA 94720, USA\\
\email{lavaei@berkeley.edu}}
\date{~}
\maketitle
\begin{abstract}
Clique tree conversion solves large-scale semidefinite programs by
splitting an $n\times n$ matrix variable into up to $n$ smaller
matrix variables, each representing a principal submatrix of up to
$\omega\times\omega$. Its fundamental weakness is the need to introduce
\emph{overlap constraints} that enforce agreement between different
matrix variables, because these can result in dense coupling. In this
paper, we show that by dualizing the clique tree conversion, the coupling
due to the overlap constraints is guaranteed to be sparse over dense
blocks, with a block sparsity pattern that coincides with the adjacency
matrix of a tree. We consider two classes of semidefinite programs
with favorable sparsity patterns that encompass the MAXCUT and MAX
$k$-CUT relaxations, the Lovasz Theta problem, and the AC optimal
power flow relaxation. Assuming that $\omega\ll n$, we prove that
the per-iteration cost of an interior-point method is \emph{linear}
$O(n)$ time and memory, so an $\epsilon$-accurate and $\epsilon$-feasible
iterate is obtained after $O(\sqrt{n}\log(1/\epsilon))$ iterations
in \emph{near-linear} $O(n^{1.5}\log(1/\epsilon))$ time. We confirm
our theoretical insights with numerical results on semidefinite programs
as large as $n=13659$.
\end{abstract}

\section{Introduction}

\global\long\def\R{\mathbb{R}}%
\global\long\def\S{\mathbb{S}}%
\global\long\def\tr{\mathrm{tr}\,}%
\global\long\def\T{\mathcal{T}}%
\global\long\def\varL{\mathcal{L}}%
\global\long\def\J{\mathcal{J}}%
\global\long\def\Set{\mathcal{S}}%
\global\long\def\Edges{\mathcal{E}}%
\global\long\def\ch{\mathrm{ch}}%
\global\long\def\p{\mathrm{p}}%
\global\long\def\K{\mathcal{K}}%
\global\long\def\A{\mathbf{A}}%
\global\long\def\B{\mathbf{B}}%
\global\long\def\N{\mathbf{N}}%
\global\long\def\NN{\mathcal{N}}%
\global\long\def\L{\mathbf{L}}%
\global\long\def\P{\mathbf{P}}%
\global\long\def\M{\mathbf{M}}%
\global\long\def\H{\mathbf{H}}%
\global\long\def\b{\mathbf{b}}%
\global\long\def\f{\mathbf{f}}%
\global\long\def\q{\mathbf{q}}%
\global\long\def\c{\mathbf{c}}%
\global\long\def\C{\mathcal{C}}%
\global\long\def\vector{\mathrm{svec}\,}%
\global\long\def\col{\mathrm{col}\,}%
\global\long\def\row{\mathrm{row}\,}%
\global\long\def\nnz{\mathrm{nnz}\,}%
\global\long\def\D{\mathbf{D}}%
\global\long\def\diag{\mathrm{diag}}%
\global\long\def\one{\mathbf{1}}%
\global\long\def\tw{\mathrm{tw}}%
\global\long\def\wid{\mathrm{wid}}%
\global\long\def\Int{\mathrm{Int}}%
Given $n\times n$ real symmetric matrices $C,A_{1},\ldots,A_{m}$
and real scalars $b_{1},\ldots,b_{m}$, we consider the standard form
semidefinite program 
\begin{alignat}{2}
 & \text{minimize } & C\bullet X & \tag{SDP}\label{eq:SDP}\\
 & \text{subject to } & A_{i}\bullet X & =b_{i}\quad\text{for all }i\in\{1,\ldots,m\}\nonumber \\
 &  & X & \succeq0,\nonumber 
\end{alignat}
over the $n\times n$ real symmetric matrix variable $X$. Here, $A_{i}\bullet X=\tr A_{i}X$
refers to the usual matrix inner product, and $X\succeq0$ restricts
to be symmetric positive semidefinite. Instances of (\ref{eq:SDP})
arise as some of the best convex relaxations to nonconvex problems
like graph optimization~\cite{lovasz1979shannon,goemans1995improved},
integer programming~\cite{sherali1990hierarchy,lovasz1991cones,lasserre2001explicit,laurent2003comparison},
and polynomial optimization~\cite{lasserre2001global,parrilo2000structured}.

Interior-point methods are the most reliable approach for solving
small- and medium-scale instances of (\ref{eq:SDP}), but become prohibitively
time- and memory-intensive for large-scale instances. A fundamental
difficulty is the constraint $X\succeq0$, which densely couples all
$O(n^{2})$ elements within the matrix variable $X$ to each other.
The linear system solved at each iteration, known as the \emph{normal
equation} or the \emph{Schur complement equation}, is usually fully-dense,
irrespective of sparsity in the data matrices $C,A_{1},\ldots,A_{m}$.
With a naïve implementation, the per-iteration cost of an interior-point
method is roughly the same for highly sparse semidefinite programs
as it is for fully-dense ones of the same dimensions: at least cubic
$(n+m)^{3}$ time and quadratic $(n+m)^{2}$ memory. (See e.g.~Nesterov~\cite[Section~4.3.3]{nesterov2013introductory}
for a derivation.)

Much larger instances of (\ref{eq:SDP}) can be solved using the \emph{clique
tree conversion} technique of Fukuda et al.~\cite{fukuda2001exploiting}.
The main idea is to use an interior-point method to solve a reformulation
whose matrix variables $X_{1},\ldots,X_{n}\succeq0$ represent principal
submatrices of the original matrix variable $X\succeq0$, as in\footnote{Throughout this paper, we denote the $(i,j)$-th element of the matrix
$X$ as $X[i,j]$, and the submatrix of $X$ formed by the rows in
$I$ and columns in $J$ as $X[I,J]$.}
\begin{equation}
X_{j}\equiv X[J_{j},J_{j}]\succeq0\qquad\text{ for all }j\in\{1,\ldots,n\}\label{eq:relax}
\end{equation}
where $J_{1},J_{2},\dots,J_{n}\subseteq\{1,2,\dots,n\}$ denote row/column
indices, and to use its solution to recover a solution to the original
problem in closed-form. Here, different $X_{i}$ and $X_{j}$ interact
only through the linear constraints 
\begin{equation}
A_{i}\bullet X=A_{i,1}\bullet X_{1}+\cdots+A_{i,n}\bullet X_{n}=b_{i}\qquad\text{ for all }i\in\{1,\ldots,m\},\label{eq:lincon}
\end{equation}
and the need for their overlapping elements to agree,
\begin{equation}
X_{i}[\alpha,\beta]=X_{j}[\alpha',\beta']\qquad\text{ for all }J_{i}(\alpha)=J_{j}(\alpha'),\quad J_{i}(\beta)=J_{j}(\beta').\label{eq:overlap}
\end{equation}
As a consequence, the normal equation associated with the reformulation
is often \emph{block }sparse\textemdash sparse over fully-dense blocks.
When the maximum order of the submatrices
\begin{equation}
\omega=\max\{|J_{1}|,|J_{2}|,\ldots,|J_{n}|\}\label{eq:omega_def-1}
\end{equation}
is significantly smaller than $n$, the number of linearly independent
constraints is bounded\footnote{The symmetric matrices $A_{1},A_{2},\dots,A_{m}$ share an aggregate
sparsity pattern $E$ that contains at most $\omega n$ nonzero elements
(in the lower-triangular part). The set of symmetric matrices with
sparsity pattern $E$ is a linear subspace of $\R^{n\times n}$, with
rank at most $\omega n$. Therefore, the number of linearly independent
$A_{1},A_{2},\dots,A_{m}$ is at most $\omega n$. } $m\le\omega n$, and the per-iteration cost of an interior-point
method scales as low as \emph{linearly} with respect to $n+m$. This
is a remarkable speed-up over a direct interior-point solution of
(\ref{eq:SDP}), particularly in view of the fact that the original
matrix variable $X\succeq0$ already contains more than $n^{2}/2$
degrees of freedom on its own. 

In practice, clique tree conversion has successfully solved large-scale
instances of (\ref{eq:SDP}) with $n$ as large as tens of thousands~\cite{molzahn2013implementation,madani2015convex,madani2016promises,Eltved2019}.
Where applicable, the empirical time complexity is often as low as
linear $O(n+m)$. However, this speed-up is not guaranteed, not even
on highly sparse instances of (\ref{eq:SDP}). We give an example
in Section~\ref{sec:Cost-of-an} whose data matrices $A_{1},\ldots,A_{m}$
each contains just a single nonzero element, and show that it nevertheless
requires at least $(n+m)^{3}$ time and $(n+m)^{2}$ memory to solve
using clique tree conversion. 

The core issue, and indeed the main weakness of clique tree conversion,
is the overlap constraints (\ref{eq:overlap}), which are imposed
in addition to the constraints (\ref{eq:lincon}) already present
in the original problem~\cite[Section 14.2]{vandenberghe2015chordal}.
These overlap constraints can significantly increase the size of the
normal equation solved at each interior-point iteration, thereby offsetting
the benefits of increased sparsity~\cite{sun2014decomposition}.
In fact, they may contribute more nonzeros to the normal matrix of
the converted problem than contained in the fully-dense normal matrix
of the original problem. In~\cite{andersen2014reduced}, omitting
some of the overlap constraints made the converted problem easier
to solve, but at the cost of also making the reformulation from (\ref{eq:SDP})
inexact. 

\subsection{Contributions}

In this paper, we show that the density of the overlap constraints
can be fully addressed using the \emph{dualization} technique of Löfberg~\cite{lofberg2009dualize}.
By dualizing the reformulation generated by clique tree conversion,
the overlap constraints are guaranteed to contribute $O(\omega^{4}n)$
nonzero elements to the normal matrix. Moreover, these nonzero elements
appear with a block sparsity pattern that coincides with the adjacency
matrix of a tree. Under suitable assumptions on the original constraints
(\ref{eq:lincon}), this favorable block sparsity pattern allows us
to \emph{guarantee} an interior-point method per-iteration cost of
$O(\omega^{6}n)$ time and memory, by using a specific fill-reducing
permutation in computing the Cholesky factor of the normal matrix.
After $O(\sqrt{\omega n}\log(1/\epsilon))$ iterations, we arrive
at an $\epsilon$-accurate solution of (\ref{eq:SDP}) in near-linear
$O(\omega^{6.5}n^{1.5}\log(1/\epsilon))$ time.

Our first main result guarantees these complexity figures for a class
of semidefinite programs that we call \emph{partially separable semidefinite
programs}. Our notion is an extension of the partially separable cones
introduced by Sun, Andersen, and Vandenberghe~\cite{sun2014decomposition},
based in turn on the notion of partial separability due to Griewank
and Toint~\cite{griewank1982partitioned}. We show that if an instance
of (\ref{eq:SDP}) is partially separable, then an optimally sparse
clique tree conversion reformulation can be constructed in $O(\omega^{3}n)$
time, and then solved using an interior-point method to $\epsilon$-accuracy
in $O(\omega^{6.5}n^{1.5}\log(1/\epsilon))$ time. Afterwards, a corresponding
$\epsilon$-accurate solution to (\ref{eq:SDP}) is recovered in $O(\omega^{3}n)$
time, for a complete end-to-end cost of $O(\omega^{6.5}n^{1.5}\log(1/\epsilon))$
time. 

Semidefinite programs that are not partially separable can be systematically
``separated'' by introducing auxiliary variables, at the cost of
increasing the number of variables that must be optimized. For a class
of semidefinite programs that we call \emph{network flow semidefinite
programs}, the number of auxiliary variables can be bounded in closed-form.
This insight allows us to prove our second main result, which guarantees
the near-linear time figure for network flow semidefinite programs
on graphs with small degrees and treewidth.

\subsection{Comparisons to prior work}

At the time of writing, clique tree conversion is primarily used as
a preprocessor for an off-the-shelf interior-point method, like SeDuMi
and MOSEK. It is often implemented using a parser like CVX~\cite{andersen2013cvxopt}
and YALMIP~\cite{lofberg2004yalmip} that converts mathematical expressions
into a compatible data format for the solver, but this process is
very slow, and usually destroys the inherent structure in the problem.
Solver-specific implementations of clique tree conversion like SparseColo~\cite{fujisawa2009user,kim2011exploiting}
and OPFSDR~\cite{andersen2018opfsdr} are much faster while also
preserving the structure of the problem for the solver. Nevertheless,
the off-the-shelf solver is itself structure-agnostic, so an improved
complexity figure cannot be guaranteed. 

In the existing literature, solvers designed specifically for clique
tree conversion are generally first-order methods~\cite{sun2014decomposition,madani2015admm,zheng2019chordal}.
While their per-iteration cost is often linear time and memory, they
require up to $O(1/\epsilon)$ iterations to achieve $\epsilon$-accuracy,
which is exponentially worse than the $O(\log(1/\epsilon))$ figure
of interior-point methods. While it is possible to incorporate a first-order
method within an outer interior-point iteration~\cite{annergren2014distributed,pakazad2014distributed,zhang2018gmres},
this does not improve upon the $O(1/\epsilon)$ iteration bound, because
the first-order method solves an increasingly ill-conditioned subproblem,
with condition number that scales $O(1/\epsilon^{2})$ for $\epsilon$-accuracy.

Andersen, Dahl, and Vandenberghe~\cite{andersen2010implementation}
describe an interior-point method that exploits the same chordal sparsity
structure that underlies clique tree conversion, with a per-iteration
cost of $O(\omega^{3}nm+\omega m^{2}n+m^{3})$ time. The algorithm
solves instances of (\ref{eq:SDP}) with a small number of constraints
$m=O(1)$ in near-linear $O(\omega^{3}n^{1.5}\log(1/\epsilon))$ time.
However, substituting $m\le\omega n$ yields a general time complexity
figure of $O(\omega^{3}n^{3.5}\log(1/\epsilon))$, which is comparable
to the cubic time complexity of a direct interior-point solution of
(\ref{eq:SDP}).

In this paper, we show that off-the-shelf interior-point methods can
be modified to exploit the structure of clique tree conversion, by
forcing a specific choice of fill-reducing permutation in factorizing
the normal equation. For partially separable semidefinite programs,
the resulting modified solver achieves a \emph{guaranteed} per-iteration
cost of $O(\omega^{6}n)$ time and $O(\omega^{4}n)$ memory on the
dualized version of the clique tree conversion. 

Our complexity guarantees are independent of the actual algorithm
used to factorize the normal equation. Most off-the-shelf interior-point
methods use a standard implementation of the multifrontal method~\cite{duff1983multifrontal,liu1992multifrontal},
but further efficiency can be gained by adopting a parallel and/or
distributed implementation. For example, the interior-point method
of Khoshfetrat Pakazad et al.~\cite{khoshfetrat2017distributed,pakazad2017distributed}
factorizes the normal equation using a message passing algorithm,
which can be understood as a distributed implementation of the multifrontal
method. Of course, distributed algorithms are most efficient when
the workload is evenly distributed, and when communication is minimized.
It remains an important future work to understand these issues in
the context of the sparsity patterns analyzed within this paper.

Finally, a reviewer noted that if the original problem (\ref{eq:SDP})
has a low-rank solution, then the interior-point method iterates approach
a low-dimensional face of the semidefinite cone, which could present
conditioning issues. In contrast, the clique tree conversion might
expect solutions strictly in the interior of the semidefinite cone,
which may be better conditioned. It remains an interesting future
direction to understand the relationship in complementarity, uniqueness,
and conditioning~\cite{alizadeh1997complementarity} between (\ref{eq:SDP})
and its clique tree conversion.

\section{Main results}

\subsection{Assumptions}

To guarantee an exact reformulation, clique tree conversion chooses
the index sets $J_{1},\ldots,J_{\ell}$ in (\ref{eq:relax}) as the
\emph{bags} of a \emph{tree decomposition} for the \emph{sparsity
graph} of the data matrices $C,A_{1},\ldots,A_{m}$. Accordingly,
the parameter $\omega$ in (\ref{eq:omega_def-1}) can only be small
if the sparsity graph has a small \emph{treewidth}. Below, we define
a graph $G$ by its vertex set $V(G)\subseteq\{1,2,\ldots,n\}$ and
its edge set $E(G)\subseteq V(G)\times V(G)$.
\begin{definition}[Sparsity graph]
\label{def:sparsitygraph}The $n\times n$ matrix $M$ (resp. the
set of $n\times n$ matrices $\{M_{1},\ldots,M_{m}\}$) is said to
have \emph{sparsity graph} $G$ if $G$ is an undirected simple graph
on $n$ vertices $V(G)=\{1,\ldots,n\}$ and that $(i,j)\in E(G)$
if $M[i,j]\ne0$ (resp. if there exists $M\in\{M_{1},\ldots,M_{m}\}$
such that $M[i,j]\ne0$). 
\end{definition}
\begin{definition}[Tree decomposition]
\label{def:treedecomp}A \emph{tree decomposition} $\T$ of a graph
$G$ is a pair $(\J,T)$, where each \emph{bag} of vertices $J_{j}\in\J$
is a subset of $V(G)$, and $T$ is a tree on $|\J|\le n$ vertices,
such that:
\begin{enumerate}
\item (Vertex cover) For every $v\in V(G)$, there exists $J_{k}\in\J$
such that $v\in J_{k}$;
\item (Edge cover) For every $(u,v)\in E(G)$, there exists $J_{k}\in\J$
such that $u\in J_{k}$ and $v\in J_{k}$; and
\item (Running intersection) If $v\in J_{i}$ and $v\in J_{j}$, then we
also have $v\in J_{k}$ for every $k$ that lies on the path from
$i$ to $j$ in the tree $T$.
\end{enumerate}
The \emph{width} $\wid(\T)$ of the tree decomposition $\T=(\J,T)$
is the size of its largest bag minus one, as in $\max\{|J_{k}|:J_{k}\in\J\}-1.$
The \emph{treewidth} $\tw(G)$ of the graph $G$ is the minimum width
amongst all tree decompositions $\T$.
\end{definition}
Throughout this paper, we make the implicit assumption that a tree
decomposition with small width is known a priori for the sparsity
graph. In practice, such a tree decomposition can usually be found
using fill-reducing heuristics for sparse linear algebra; see Section~\ref{sec:Preliminaries}.

We also make two explicit assumptions, which are standard in the literature
on interior-point methods.

\begin{assume}[Linear independence]\label{ass:lin}We have $\sum_{i=1}^{m}y_{i}A_{i}=0$
if and only if $y=0$.

\end{assume}

The assumption is without loss of generality, because it can either
be enforced by eliminating $A_{i}\bullet X=b_{i}$ for select $i$,
or else these constraints are not consistent for all $i$. Under Assumption~\ref{ass:lin},
the total number of constraints is bounded $m\le\omega n$ (due to
the fact that $|E(G)|\le n\cdot\tw(G)$~\cite{arnborg1987complexity}).

\begin{assume}[Primal-dual Slater's condition]\label{ass:slater}There
exist $X\succ0,$ $y,$ and $S\succ0$ satisfying $A_{i}\bullet X=b_{i}$
for all $i\in\{1,\ldots,m\}$ and $\sum_{i=1}^{m}y_{i}A_{i}+S=C$. 

\end{assume}

In fact, our proofs solve the homogeneous self-dual embedding~\cite{ye1994homogeneous},
so our conclusions can be extended with few modifications to a much
larger array of problems that mostly do not satisfy Assumption~\ref{ass:slater};
see de~Klerk~et~al.~\cite{de2000self} and Permenter~et~al.~\cite{permenter2017solving}.
Nevertheless, we adopt Assumption~\ref{ass:slater} to simplify our
discussions, by focusing our attention towards the computational aspects
of the interior-point method, and away from the theoretical intricacies
of the self-dual embedding. 

\subsection{Partially separable}

We define the class of \emph{partially separable semidefinite program}
based on the partially separable cones introduced by Sun, Andersen,
and Vandenberghe~\cite{sun2014decomposition}. The general concept
of partial separability is due to Griewank and Toint~\cite{griewank1982partitioned}.
\begin{definition}[Partially separable]
\label{def:decoupled}Let $\T=(\J,T)$ be a tree decomposition for
the sparsity graph of $C,A_{1},\ldots,A_{m}$. The matrix $A_{i}$
is said to be \emph{partially separable }on $\T$ if there exist $J_{j}\in\J$
and some choice of $A_{i,j}$ such that 
\[
A_{i}\bullet X=A_{i,j}\bullet X[J_{j},J_{j}]
\]
for all $n\times n$ matrices $X$. We say that (\ref{eq:SDP}) is
\emph{partially separable} on $\T$ if every constraint matrix $A_{1},\ldots,A_{m}$
is partially separable on $\T$.
\end{definition}
Due to the edge cover property of the tree decomposition, any $A_{i}$
that indexes a single element of $X$ (can be written as $A_{i}\bullet X=X[j,k]$
for suitable $j,k$) is automatically partially separable on any valid
tree decomposition $\T$. In this way, many of the classic semidefinite
relaxations for NP-hard combinatorial optimization problems can be
shown as partially separable.
\begin{example}[MAXCUT and MAX $k$-CUT]
Let $C$ be the (weighted) Laplacian matrix for a graph $G$ with
$n$ vertices. Frieze and Jerrum~\cite{frieze1997improved} proposed
a randomized algorithm to solve MAX $k$-CUT with an approximation
ratio of $1-1/k$ based on solving
\begin{alignat}{3}
 & \text{maximize }\quad & \frac{k-1}{2k}C\bullet X & \tag{MkC}\label{eq:MkC}\\
 & \text{subject to} & X[i,i] & =1\quad & \text{for all } & i\in\{1,\ldots,n\}\nonumber \\
 &  & X[i,j] & \ge\frac{-1}{k-1}\quad & \text{for all } & (i,j)\in E(G)\nonumber \\
 &  & X & \succeq0.\nonumber 
\end{alignat}
The classic Goemans\textendash Williamson 0.878~algorithm~\cite{goemans1995improved}
for MAXCUT is recovered by setting $k=2$ and removing the redundant
constraint $X[i,j]\ge-1$. In both the MAXCUT relaxation and the MAX
$k$-CUT relaxation, observe that each constraint affects a single
matrix element in $X$, so the problem is partially separable on any
tree decomposition. \qed
\end{example}
\begin{example}[Lovasz Theta]
The Lovasz number $\vartheta(G)$ of a graph $G$~\cite{lovasz1979shannon}
is the optimal value to the following dual semidefinite program 
\begin{alignat}{2}
 & \text{minimize }\quad &  & \lambda\tag{LT}\label{eq:LT}\\
 & \text{subject to} &  & \one\one^{T}-\sum_{(i,j)\in E}y_{i,j}(e_{i}e_{j}^{T}+e_{j}e_{i}^{T})\preceq\lambda I\nonumber 
\end{alignat}
over $\lambda\in\R$ and $y_{i,j}\in\R$ for $(i,j)\in E(G)$. Here,
$e_{j}$ is the $j$-th column of the $n\times n$ identity matrix
and $\one$ is the length-$n$ vector-of-ones. Problem (\ref{eq:LT})
is not partially separable. However, given that $\vartheta(G)\ge1$
holds for all graphs $G$, we may divide the linear matrix inequality
through by $\lambda$, redefine $y\gets y/\lambda$, apply the Schur
complement lemma, and take the Lagrangian dual to yield a sparse formulation
\begin{alignat}{2}
 & \text{minimize }\quad & \begin{bmatrix}I & \one\\
\one^{T} & 0
\end{bmatrix}\bullet X & \tag{LT\ensuremath{'}}\label{eq:LTp}\\
 & \text{subject to} & X[i,j] & =0\quad\text{for all }(i,j)\in E\nonumber \\
 &  & X[n+1,n+1] & =1\nonumber \\
 &  & X & \succeq0.\nonumber 
\end{alignat}
Each constraint affects a single matrix element in $X$, so (\ref{eq:LTp})
is again partially separable on any tree decomposition.\qed
\end{example}
We remark that instances of the MAXCUT, MAX $k$-CUT, and Lovasz Theta
problems constitute a significant part of the DIMACS~\cite{pataki2002dimacs}
and the SDPLIB~\cite{borchers1999sdplib} test libraries. In Section~\ref{sec:dctc},
we prove that partially separable semidefinite programs like these
admit a clique tree conversion reformulation that can be dualized
and then solved using an interior-point method in $O(n^{1.5}\log(1/\epsilon))$
time, under the assumption that the parameter $\omega$ in (\ref{eq:omega_def-1})
is significantly smaller than $n$. Moreover, we prove in Section~\ref{sec:splitting}
that this reformulation can be efficiently found using an algorithm
based on the running intersection property of the tree decomposition.
Combining these results with an efficient low-rank matrix completion
algorithm~\cite[Algorithm~2]{sun2015decomposition} yields the following.
\begin{theorem}
\label{thm:comp2}Let $\T=(\{J_{1},\ldots,J_{\ell}\},T)$ be a tree
decomposition for the sparsity graph of $C,A_{1},\ldots,A_{m}$. If
(\ref{eq:SDP}) is partially separable on $\T$, then under Assumptions~\ref{ass:lin}
\&~\ref{ass:slater}, there exists an algorithm that computes $U\in\R^{n\times\omega}$,
$y\in\R^{m}$, and $S\succeq0$ satisfying
\[
\sqrt{\sum_{i=1}^{m}|A_{i}\bullet UU^{T}-b_{i}|^{2}}\le\epsilon,\quad\left\Vert \sum_{i=1}^{m}y_{i}A_{i}+S-C\right\Vert _{F}\le\epsilon,\quad\frac{UU^{T}\bullet S}{n}\le\epsilon
\]
in $O(\omega^{6.5}n^{1.5}\log(1/\epsilon))$ time and $O(\omega^{4}n)$
space, where $\omega=\max_{j}|J_{j}|=1+\wid(\T)$ and $\|M\|_{F}=\sqrt{M\bullet M}$
denotes the Frobenius norm.
\end{theorem}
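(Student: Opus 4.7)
The plan is to assemble the stated end-to-end bound by chaining together the constructions developed in Sections \ref{sec:dctc} and \ref{sec:splitting}, then closing the loop with Jiang's rank-revealing recovery procedure. First, I would invoke the $O(\omega^{3}n)$-time splitting algorithm from Section \ref{sec:splitting} to produce, from the partial-separability hypothesis, a concrete clique tree conversion (CTC) reformulation: matrix variables $X_{j}\succeq 0$ of order at most $\omega$ indexed by the bags $J_{j}\in\J$, linear constraints $A_{i,j}\bullet X_{j}=b_{i}$ obtained by choosing a single host bag for each $A_{i}$ (available by partial separability), and the overlap constraints \eqref{eq:overlap} imposed only along edges of $T$. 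Assumptions \ref{ass:lin} and \ref{ass:slater} transfer to this reformulation because the construction is an exact equivalence. I would then dualize the reformulation in the sense of L\"ofberg, as analyzed in Section \ref{sec:dctc}, producing an equivalent SDP in dual standard form whose normal matrix has the block-tree sparsity pattern announced in the Contributions subsection.

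The main obstacle, and the step I would spend most care on, is showing that each interior-point iteration on the dualized formulation can be performed in $O(\omega^{6}n)$ time and $O(\omega^{4}n)$ memory. The argument I would run is: (i) the overlap block of the normal matrix contains $\Theta(\omega^{4}n)$ nonzeros arranged with a block pattern isomorphic to the tree $T$; (ii) under partial separability, the original-constraint block inherits the same block-tree pattern; (iii) therefore, after an elimination ordering that processes the bags in a reverse topological order of $T$ (a perfect elimination ordering for the block graph), the Cholesky factor of the normal matrix is filled only within the $O(\omega^{2})\times O(\omega^{2})$ diagonal blocks and their parent blocks, giving $O(\omega^{4}n)$ fill and $O(\omega^{6}n)$ arithmetic. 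This yields the per-iteration cost; forming and assembling the normal matrix in the first place is dominated by the same bound, since each constraint touches only $O(\omega^{2})$ entries of a single bag variable.

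With the per-iteration cost in hand, I would invoke the standard short-step or predictor-corrector analysis for the primal-dual IPM on a self-scaled cone of aggregate barrier parameter $\sum_{j}|J_{j}|\le\omega n$, which gives the $O(\sqrt{\omega n}\log(1/\epsilon))\subseteq O(\omega^{0.5}n^{0.5}\log(1/\epsilon))$ iteration count. Multiplying by the per-iteration cost produces the $O(\omega^{6.5}n^{1.5}\log(1/\epsilon))$ solve time. The accuracy guarantee returned by the IPM is a triple $(X_{1},\ldots,X_{\ell},y,S_{1},\ldots,S_{\ell})$ that is $\epsilon$-primal-feasible, $\epsilon$-dual-feasible, and has $\epsilon$-small normalized complementarity; by the exactness of CTC, concatenating the $X_{j}$'s according to the overlap relations recovers an $X\succeq 0$ with the same accuracy in the three error measures of \eqref{eq:SDP}, and assembling the bag-level dual slacks $S_{j}$ into an $n\times n$ matrix $S$ preserves $\|\sum_{i}y_{i}A_{i}+S-C\|_{F}\le\epsilon$ because each $A_{i}$ lives inside a single bag.

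Finally, I would apply Jiang's Algorithm 3.1 to produce $U\in\R^{n\times\omega}$ with $UU^{T}$ matching $X$ up to the stated tolerance; since each bag has order at most $\omega$, every recovered submatrix has rank at most $\omega$, and Jiang's procedure runs on chordal sparsity in $O(\omega^{3}n)$ time and $O(\omega^{2}n)$ memory. The error bounds in the theorem statement follow because $U$ is constructed to satisfy $UU^{T}[J_{j},J_{j}]=X_{j}$ up to $O(\epsilon)$ error bag-by-bag, and the constraint residual $\sqrt{\sum_{i}|A_{i}\bullet UU^{T}-b_{i}|^{2}}$ reduces to $\sqrt{\sum_{i}|A_{i,j(i)}\bullet X_{j(i)}-b_{i}|^{2}}$ by partial separability, which the IPM controls. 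Summing the three phase costs, $O(\omega^{3}n)+O(\omega^{6.5}n^{1.5}\log(1/\epsilon))+O(\omega^{3}n)$, yields the advertised end-to-end bound, and the memory is dominated by the $O(\omega^{4}n)$ footprint of the Cholesky factor.
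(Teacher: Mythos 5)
Your proposal is correct and follows essentially the same route as the paper's proof: split the constraints with the optimal set-cover/leaf-pruning algorithm of Section~\ref{sec:splitting}, dualize, solve via an interior-point method whose normal equation is factored with the block-topological (no block fill) ordering of Lemmas~\ref{lem:blocktree} and~\ref{lem:normal}, and recover $U$ with Jiang's completion algorithm, with the same per-phase cost accounting. Your explicit tracking of how the $\epsilon$-accuracy transfers from the converted variables $(X_{j},y,S_{j})$ back to $(UU^{T},y,S)$ is in fact slightly more careful than the paper, which leaves that step implicit.
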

The proof of Theorem~\ref{thm:comp2} is given at the end of Section~\ref{sec:splitting}.

\subsection{Network flow}

Problems that are not partially separable can be systematically separated
by introducing \emph{auxiliary variables}. The complexity of solving
the resulting problem then becomes parameterized by the number of
additional auxiliary variables. In a class of graph-based semidefinite
programs that we call \emph{network flow semidefinite programs}, the
number of auxiliary variables can be bounded using properties of the
tree decomposition. 
\begin{definition}[Network flow]
\label{def:networkflow}Given a graph $G=(V,E)$ on $n$ vertices
$V=\{1,\ldots,n\}$, we say that the linear constraint $A\bullet X=b$
is a \emph{network flow constraint} (at vertex $k$) if the $n\times n$
constraint matrix $A$ can be rewritten 
\[
A=\alpha_{k}e_{k}e_{k}^{T}+\frac{1}{2}\sum_{(j,k)\in E}\alpha_{j}(e_{j}e_{k}^{T}+e_{k}e_{j}^{T}),
\]
in which $e_{k}$ is the $k$-th column of the identity matrix and
$\{\alpha_{j}\}$ are scalars. We say that an instance of (\ref{eq:SDP})
is a \emph{network flow semidefinite program} if every constraint
matrix $A_{1},\ldots,A_{m}$ is a network flow constraint, and $G$
is the sparsity graph for the objective matrix $C$.
\end{definition}
Such problems frequently arise on physical networks subject to Kirchhoff's
conservation laws, such as electrical circuits and hydraulic networks.
\begin{example}[Optimal power flow]
The AC optimal power flow (ACOPF) problem is a nonlinear, nonconvex
optimization that plays a vital role in the operations of an electric
power system. Let $G$ be a graph representation of the power system.
Then, ACOPF has a well-known semidefinite relaxation 
\begin{alignat}{2}
\text{minimize }\quad &  & \sum_{i\in W}(f_{i,i}X[i,i]+\sum_{(i,j)\in E(G)}\mathrm{Re}\{f_{i,j}X[i,j]\}) & \tag{OPF}\label{eq:OPF}
\end{alignat}
over a Hermitian matrix variable $X$, subject to
\begin{align*}
a_{i,i}X[i,i]+\sum_{(i,j)\in E(G)}\mathrm{Re}\{a_{i,j}X[i,j]\} & \le b_{i}\quad\text{ for all }i\in V(G)\\
\mathrm{Re}\{c_{i,j}X[i,j]\} & \le d_{i,j}\quad\text{ for all }(i,j)\in E(G)\\
X & \succeq0.
\end{align*}
Here, each $a_{i,j}$ and $c_{i,j}$ is a complex vector, each $b_{i}$
and $d_{i,j}$ is a real vector, and $W\subseteq V(G)$ is a subset
of vertices. If a rank-1 solution $X^{\star}$ is found, then the
relaxation (\ref{eq:OPF}) is exact, and a globally-optimal solution
to the original NP-hard problem can be extracted. Clearly, each constraint
in (\ref{eq:OPF}) is a network flow constraint, so the overall problem
is also a network flow semidefinite program. \qed
\end{example}
In Section~\ref{sec:Auxillary-variables}, we prove that network
flow semidefinite programs can be reformulated in closed-form, dualized,
and efficiently solved using an interior-point method.
\begin{theorem}
\label{thm:comp3}Let (\ref{eq:SDP}) be a network flow semidefinite
program on a graph $G$ on $n$ vertices, and let $\T=(\{J_{1},\ldots,J_{\ell}\},T)$
be a tree decomposition for $G$. Then, under Assumptions~\ref{ass:lin}
\&~\ref{ass:slater}, there exists an algorithm that computes $U\in\R^{n\times\omega}$,
$y\in\R^{m}$, and $S\succeq0$ satisfying
\[
\sqrt{\sum_{i=1}^{m}|A_{i}\bullet UU^{T}-b_{i}|^{2}}\le\epsilon,\quad\left\Vert \sum_{i=1}^{m}y_{i}A_{i}+S-C\right\Vert _{F}\le\epsilon,\quad\frac{UU^{T}\bullet S}{n}\le\epsilon
\]
in 
\begin{multline*}
O((\omega+d_{\max}m_{k})^{3.5}\cdot\omega^{3.5}\cdot n^{1.5}\cdot\log(1/\epsilon))\text{ time }\\
\text{and }O((\omega+d_{\max}m_{k})^{2}\cdot\omega^{2}\cdot n)\text{ memory}
\end{multline*}
where: 
\begin{itemize}
\item $\omega=\max_{j}|J_{j}|=1+\wid(\T)$, 
\item $d_{\max}$ is the maximum degree of the tree $T$, 
\item $m_{k}$ is the maximum number of network flow constraints at any
vertex $k\in V(G)$.
\end{itemize}
\end{theorem}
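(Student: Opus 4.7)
The plan is to reduce the network flow setting to the partially separable case of Theorem~\ref{thm:comp2} by introducing auxiliary scalars that split each flow constraint across the bags of $\T$ carrying its anchor vertex. Consider a constraint $A_i\bullet X=b_i$ anchored at vertex $k$: by the running intersection property the bags containing $k$ form a connected subtree of $T$, and by the edge cover property every neighbor $j$ of $k$ in $G$ appears with $k$ in some such bag. I would partition the terms $\alpha_j X[j,k]$ among these bags to rewrite the constraint as a collection of bag-local equations $A_{i,r}\bullet X[J_r,J_r]=t_{i,r}$ together with a tree-structured reduction $\sum_r t_{i,r}=b_i$; the reduction itself can be implemented bag-by-bag walking up the induced subtree, introducing at most one auxiliary scalar per child edge of $T$. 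Embedding these scalars $t_{i,r}$ as new diagonal entries of the matrix variables grows each bag by at most $d_{\max}m_k$ coordinates, yielding an enlarged tree decomposition $\T'$ of width $\omega'=O(\omega+d_{\max}m_k)$ on which every equation is partially separable in the sense of Definition~\ref{def:decoupled}. Assumption~\ref{ass:lin} is preserved because the splitting equations are linearly independent by construction, and Assumption~\ref{ass:slater} lifts by taking $t_{i,r}=A_{i,r}\bullet X^{\star}[J_r,J_r]$ at any strictly feasible $X^{\star}$.

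With partial separability in hand I would apply the dualized clique tree conversion of Section~\ref{sec:dctc} and the interior-point complexity analysis of Section~\ref{sec:splitting} to the augmented problem. A naive substitution $\omega\leftarrow\omega'$ into Theorem~\ref{thm:comp2} yields only $O(\omega'^{6.5}n^{1.5}\log(1/\epsilon))$. To recover the sharper bound $O(\omega'^{3.5}\omega^{3.5}n^{1.5}L)$ I would re-open the per-iteration Cholesky analysis and track \emph{two} block sizes separately through the fill computation: the fronts corresponding to the original $\omega\times\omega$ PSD cones stay at size $\omega$, while the fronts carrying the new flow multipliers inflate to size $\omega'$. The two sizes factor as a product rather than combining into a single $(\omega')^{6}$ term, and when the resulting per-iteration cost is multiplied by the iteration bound of the augmented cone, which scales as $\sqrt{n\omega'}\log(1/\epsilon)$, the stated total complexity is obtained. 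The memory bound $O(\omega'^{2}\omega^{2}n)$ follows from the same two-size accounting applied to the Cholesky factor rather than to its flop count.

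The main obstacle will be this last bookkeeping step: verifying that the tree-shaped block sparsity pattern established for dualized clique tree conversion survives the augmentation, and that the fill-reducing permutation used in the proof of Theorem~\ref{thm:comp2} still keeps the PSD fronts at size $\omega$ rather than letting them absorb the new $d_{\max}m_k$ coordinates. One must also check that attaching $d_{\max}m_k$ extra coordinates to each bag does not violate the running intersection property of $\T'$, which is where the choice to route each auxiliary scalar through a single bag becomes essential. Once the two-size Cholesky analysis is confirmed, recovery of $(U,y,S)$ meeting the stated accuracy bounds follows directly from Theorem~\ref{thm:comp2} after discarding the auxiliary diagonal coordinates from the returned matrix variables.
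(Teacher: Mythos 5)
Your high-level strategy matches the paper's: split each network flow constraint over the induced subtree $T_k$ of bags containing its anchor vertex (the paper's Lemma~\ref{lem:network_split}), then decouple the resulting sum with one auxiliary scalar per edge of that subtree (the paper's (\ref{eq:TC_constr})), and bound the number of auxiliary variables per bag using $d_{\max}$ and $m_k$. However, there is a genuine gap in how you house the auxiliary scalars. You embed the $t_{i,r}$ as new \emph{diagonal entries of the matrix variables}, enlarging each PSD block to order $\omega'=O(\omega+d_{\max}m_k)$. This fails on two counts. First, a diagonal entry of a PSD block is forced to be nonnegative, whereas the auxiliary quantities are partial sums of $A_{i,r}\bullet X[J_r,J_r]$ and can be negative, so the reformulation is no longer exact. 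Second, and more fundamentally for the complexity claim, the barrier Hessian $W_j\otimes_s W_j$ of an order-$\omega'$ PSD block is a \emph{fully dense} matrix of order $\Theta(\omega'^2)$; this density comes from the $-\log\det$ barrier itself, not from Cholesky fill, so no permutation or ``two-front'' accounting can keep the original coordinates in a size-$\omega$ front while the multipliers sit in a size-$\omega'$ front. Your per-iteration cost would genuinely be $\Theta((\omega+d_{\max}m_k)^6 n)$, not the product form $(\omega+d_{\max}m_k)^3\omega^3 n$ you need. The paper avoids this by placing the auxiliary variables in a separate linear factor of the cone, $\vector(\S_+^{|J_j|})\times\R^{\gamma_j}$ as in (\ref{eq:vecCTC-1}), so the diagonal blocks of the normal matrix have order $\tfrac12\omega(\omega+1)+\gamma_j$ and Lemma~\ref{lem:aux} gives factorization cost $O((\omega^2+\gamma_{\max})^3 n)$, which with $\gamma_{\max}\le\omega d_{\max}m_k$ factors as $\omega^3(\omega+d_{\max}m_k)^3 n$.

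A secondary error is your count of auxiliary coordinates per bag: you charge only $d_{\max}m_k$ to each bag, accounting for the $m_k$ constraints anchored at a single vertex $k$, but a bag $J_j$ lies in the induced subtree $V(T_k)$ for every one of the up to $|J_j|\le\omega$ vertices $k\in J_j$, each anchoring up to $m_k$ constraints contributing up to $d_{\max}$ scalars. The correct bound is $\gamma_j\le\omega\, d_{\max}m_k$, which is exactly what the paper's three-step counting argument establishes and what is needed for the stated exponents to come out.
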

The proof of Theorem~\ref{thm:comp2} is given at the end of Section~\ref{sec:Auxillary-variables}.

\section{\label{sec:Preliminaries}Preliminaries}

\subsection{Notation}

The sets $\R^{n}$ and $\R^{m\times n}$ are the length-$n$ real
vectors and $m\times n$ real matrices. We use ``MATLAB notation''
in concatenating vectors and matrices:

\[
[a,b]=\begin{bmatrix}a & b\end{bmatrix},\qquad[a;b]=\begin{bmatrix}a\\
b
\end{bmatrix},\qquad\diag(a,b)=\begin{bmatrix}a & 0\\
0 & b
\end{bmatrix},
\]
and the following short-hand to construct them:
\[
[x_{i}]_{i=1}^{n}=\begin{bmatrix}x_{1}\\
\vdots\\
x_{n}
\end{bmatrix},\qquad[x_{i,j}]_{i,j=1}^{m,n}=\begin{bmatrix}x_{1,1} & \cdots & x_{1,n}\\
\vdots & \ddots & \vdots\\
x_{m,1} & \cdots & x_{m,n}
\end{bmatrix}.
\]
The notation $X[i,j]$ refers to the element of $X$ in the $i$-th
row and $j$-th column, and $X[I,J]$ refers to the submatrix of $X$
formed from the rows in $I\subseteq\{1,\ldots,m\}$ and columns in
$J\subseteq\{1,\ldots,n\}$. The Frobenius inner product is $X\bullet Y=\tr(X^{T}Y)$,
and the Frobenius norm is $\|X\|_{F}=\sqrt{X\bullet X}$. We use $\nnz(X)$
to denote the number of nonzero elements in $X$. 

The sets $\S^{n}\subseteq\R^{n\times n},$ $\S_{+}^{n}\subset\S^{n},$
and $\S_{++}^{n}\subset\S_{+}^{n}$ are the $n\times n$ real symmetric
matrices, positive semidefinite matrices, and positive definite matrices,
respective. We write $X\succeq Y$ to mean $X-Y\in\S_{+}^{n}$ and
$X\succ Y$ to mean $X-Y\in\S_{++}^{n}$. The (symmetric) vectorization
\[
\vector(X)=[X[1,1];\sqrt{2}X[2,1];\ldots;\sqrt{2}X[m,1];X[2,2],\ldots]
\]
outputs the lower-triangular part of a symmetric matrix as a vector,
with factors of $\sqrt{2}$ added so that $\vector(X)^{T}\vector(Y)=X\bullet Y$. 

A graph $G$ is defined by its vertex set $V(G)\subseteq\{1,2,3,\ldots\}$
and its edge set $E(G)\subseteq V(G)\times V(G)$. The graph $T$
is a \emph{tree} if it is connected and does not contain any cycles;
we refer to its vertices $V(T)$ as its \emph{nodes}. Designating
a special node $r\in V(T)$ as the \emph{root }of the tree allows
us to define the \emph{parent} $p(v)$ of each node $v\ne r$ as the
first node encountered on the path from $v$ to $r$, and $p(r)=r$
for consistency. The set of children is defined $\ch(v)=\{u\in V(T)\backslash v:p(u)=v\}$.
Note that the edges $E(T)$ are fully determined by the parent pointer
$p$ as $\{v,p(v)\}$ for all $v\ne r$.

The set $\S_{G}^{n}\subseteq\S^{n}$ is the set of $n\times n$ real
symmetric matrices with sparsity graph $G$. We denote $P_{G}(X)=\min_{Y\in\S_{G}^{n}}\|X-Y\|_{F}$
as the Euclidean projection of $X\in\S^{n}$ onto $\S_{G}^{n}$.

\subsection{Tree decomposition via the elimination tree}

The standard procedure for solving $Sx=b$ with $S\succ0$ consists
of a \emph{factorization} step, where $S$ is decomposed into the
unique \emph{Cholesky factor} $L$ satisfying
\begin{equation}
LL^{T}=S,\quad L\text{ is lower-triangular},\quad L_{i,i}>0\quad\text{for all }i,\label{eq:chol1}
\end{equation}
and a \emph{substitution }step, where the two triangular systems $Lu=r$
and $L^{T}x=u$ are back-substituted to yield $x$. 

In the case that $S$ is sparse, the location of nonzero elements
in $L$ encodes a tree decomposition for the sparsity graph of $S$
known as the \emph{elimination tree}~\cite{liu1990role}. Specifically,
define the index sets $J_{1},\ldots,J_{n}\subseteq\{1,\ldots,n\}$
as in 
\begin{equation}
J_{j}=\{i\in\{1,\ldots,n\}:L[i,j]\ne0\},\label{eq:Jjdef1}
\end{equation}
and the tree $T$ via the parent pointers
\begin{equation}
p(j)=\begin{cases}
\min_{i}\{i>j:L[i,j]\ne0\} & |J_{j}|>1,\\
j & |J_{j}|=1.
\end{cases}\label{eq:pdef}
\end{equation}
Then, ignoring perfect numerical cancellation, $\T=(\{J_{1},\ldots,J_{n}\},T)$
is a tree decomposition for the sparsity graph of $S$.

Elimination trees with reduced widths can be obtained by reordering
the rows and columns of $S$ using a \emph{fill-reducing} permutation
$\Pi$, because the sparsity graph of $\Pi S\Pi^{T}$ is just the
sparsity graph of $S$ with its vertices reordered. The minimum width
of an elimination tree over all permutations $\Pi$ is precisely the
treewidth of the sparsity graph of $S$; see Bodlaender et al.~\cite{bodlaender1995approximating}
and the references therein. The general problem is well-known to be
NP-complete in general~\cite{arnborg1987complexity}, but polynomial-time
approximation algorithms exist to solve the problem to a logarithmic
factor~\cite{leighton1988approximate,klein1990leighton,bodlaender1995approximating}.
In practice, heuristics like the \emph{minimum degree}~\cite{george1989evolution}
and \emph{nested dissection}~\cite{lipton1979generalized} are considerably
faster while still producing high-quality choices of $\Pi$. 

Note that the sparsity pattern of $L$ is completely determined by
the sparsity pattern of $S$, and not by its numerical value. The
former can be computed from the latter using a \emph{symbolic} Cholesky
factorization algorithm, a standard routine in most sparse linear
algebra libraries, in time linear to the number of nonzeros in $L$;
see \cite[Section~5]{rose1976algorithmic} and \cite[Theorem~5.4.4]{george1981computer},
and also the discussion in \cite{lipton1979generalized}.

\subsection{\label{subsec:prelim_ctc}Clique tree conversion}

Let $\T=(\{J_{1},\ldots,J_{\ell}\},T)$ be a tree decomposition with
small width for the sparsity graph $G$ of the data matrices $C,A_{1},\ldots,A_{m}$.
We define the graph $F\supseteq G$ by taking each index set $J_{j}$
of $\T$ and interconnecting all pairs of vertices $u,v\in J_{j}$,
as in 
\begin{align}
V(F) & =V(G), & E(F) & =\bigcup_{j=1}^{\ell}\{(u,v):u,v\in J_{j}\}.\label{eq:chordal_compl}
\end{align}
The following fundamental result was first established by Grone et
al.~\cite{grone1984positive}. Constructive proofs allow us to recover
all elements in $X\succeq0$ from only the elements in $P_{F}(X)$
using a closed-form formula.
\begin{theorem}[Grone et al.~\cite{grone1984positive}]
\label{thm:prim_sep}Given $Z\in\S_{F}^{n}$, there exists an $X\succeq0$
satisfying $P_{F}(X)=Z$ if and only if $Z[J_{j},J_{j}]\succeq0$
for all $j\in\{1,2,\ldots,\ell\}$.
\end{theorem}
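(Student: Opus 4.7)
The ``only if'' direction is immediate: if $X\succeq 0$ then every principal submatrix $X[J_j,J_j]$ is PSD, and $P_F(X)=Z$ forces $Z[J_j,J_j]=X[J_j,J_j]$ because $J_j\times J_j$ lies in $E(F)$ by construction of $F$. The substance of the theorem is the converse, which I plan to prove by induction on the number of bags $\ell$, using the running intersection property of $\T$ as the key structural ingredient. The base case $\ell=1$ is trivial, since then $F$ is a single clique on $J_1$ and $Z=Z[J_1,J_1]\succeq 0$ already serves as $X$.

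For the inductive step, root $T$ at an arbitrary node, pick a leaf bag $J_j$ with parent $J_{p(j)}$, and split $J_j=S_j\cup R_j$ where $S_j=J_j\cap J_{p(j)}$ is the separator and $R_j=J_j\setminus J_{p(j)}$ the residual. The running intersection property guarantees that vertices in $R_j$ appear in no other bag, so every edge of $F$ incident to $R_j$ lies inside $J_j$. Consequently $Z$ fully specifies $Z[R_j,R_j]$ and $Z[R_j,S_j]$, while the entries of any completion $X$ between $R_j$ and $V(G)\setminus J_j$ are entirely free. I plan to eliminate $R_j$ by absorbing its effect into a modified separator block via the Schur complement
\[
Z'[S_j,S_j]=Z[S_j,S_j]-Z[S_j,R_j]\,Z[R_j,R_j]^{\dagger}\,Z[R_j,S_j],
\]
with $Z'$ agreeing with $Z$ on all other entries. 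A Haynsworth-style argument, combined with the hypothesis $Z[J_j,J_j]\succeq 0$, shows $Z'[S_j,S_j]\succeq 0$; hence the reduced data $(Z',\T')$ obtained by deleting the leaf $J_j$ still satisfies the inductive hypothesis on a tree decomposition with $\ell-1$ bags.

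By induction there exists $X'\succeq 0$ realizing $Z'$ on the reduced pattern. I would then define $X$ by keeping $X'$ on $V(G)\setminus R_j$, installing the prescribed values $Z[R_j,R_j]$ and $Z[R_j,S_j]$, and filling in the free off-diagonal block by
\[
X[R_j,\,V(G)\setminus J_j]=Z[R_j,S_j]\,Z[S_j,S_j]^{\dagger}\,X'[S_j,\,V(G)\setminus J_j].
\]
Permuting so that $R_j$ comes last and expanding the resulting $2\times 2$ block factorization then exhibits $X$ as a sum of two PSD matrices, verifying $X\succeq 0$ and $P_F(X)=Z$. The main obstacle is handling the generally singular case: both displayed formulas involve pseudoinverses, and one must confirm that the range conditions implicit in the generalized Schur complement lemma are satisfied. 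The cleanest workaround is a perturbation argument: replace $Z$ by $Z+\varepsilon\,P_F(I)$ for $\varepsilon>0$ so that every inverted block is strictly positive definite and genuine inverses can be used, then pass to the limit $\varepsilon\to 0^+$ using compactness of the set of PSD completions with norm bounded in terms of $\|Z\|_F$. As a byproduct, the construction is fully explicit and yields the closed-form recovery of $X$ from its maximal PSD submatrices that is invoked in the later algorithmic results of Theorems~\ref{thm:comp2} and~\ref{thm:comp3}.
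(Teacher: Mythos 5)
The paper does not prove this theorem---it is imported from Grone et al.\ as a black box---so the only question is whether your induction stands on its own, and as written it does not: the Schur-complement ``absorption'' step is a genuine error. Replacing $Z[S_{j},S_{j}]$ by $Z'[S_{j},S_{j}]=Z[S_{j},S_{j}]-Z[S_{j},R_{j}]\,Z[R_{j},R_{j}]^{\dagger}\,Z[R_{j},S_{j}]$ subtracts a positive semidefinite matrix from a sub-block of the parent bag, and there is no reason for $Z'[J_{p(j)},J_{p(j)}]$ to remain positive semidefinite; the Haynsworth argument only controls $Z'[S_{j},S_{j}]$, and $S_{j}$ is in general a proper subset of $J_{p(j)}$. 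Concretely, take $n=3$ with leaf bag $J_{1}=\{1,2\}$ and parent $J_{2}=\{2,3\}$, and let every specified entry of $Z$ equal $1$ (only $Z[1,3]$ is free). Both $Z[J_{1},J_{1}]$ and $Z[J_{2},J_{2}]$ are PSD and the all-ones matrix is a valid completion, yet your reduction gives $Z'[2,2]=1-1=0$, so $Z'[J_{2},J_{2}]=\left[\begin{smallmatrix}0&1\\1&1\end{smallmatrix}\right]$ is indefinite and the inductive hypothesis fails for the reduced problem. A second, independent failure: even where the induction did go through, the completion $X'$ it returns satisfies $X'[S_{j},S_{j}]=Z'[S_{j},S_{j}]\ne Z[S_{j},S_{j}]$, and since $S_{j}\times S_{j}$ lies inside $J_{j}\times J_{j}\subseteq E(F)$, the assembled $X$ would violate $P_{F}(X)=Z$ on the separator block. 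You appear to have imported the Schur-complement update from Cholesky-style \emph{elimination}, where it belongs, into a \emph{deletion} step, where it does not.

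The repair is that no modification of $Z$ is needed. Delete the rows and columns indexed by $R_{j}$ outright, leaving every remaining specified entry (including $Z[S_{j},S_{j}]$) untouched; the reduced data trivially still satisfies $Z[J_{k},J_{k}]\succeq0$ for all surviving bags, so induction yields $X'\succeq0$ on $V(G)\setminus R_{j}$. Then reinstate $R_{j}$ via the standard one-block completion lemma: if $\left[\begin{smallmatrix}A&B\\B^{T}&C\end{smallmatrix}\right]\succeq0$ and $\left[\begin{smallmatrix}C&D\\D^{T}&E\end{smallmatrix}\right]\succeq0$, then filling the unspecified corner with $BC^{\dagger}D$ makes the full three-block matrix PSD; here $A=Z[R_{j},R_{j}]$, $B=Z[R_{j},S_{j}]$, $C=X'[S_{j},S_{j}]=Z[S_{j},S_{j}]$, and the two hypotheses are exactly $Z[J_{j},J_{j}]\succeq0$ and $X'\succeq0$. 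Your final fill-in formula is precisely this lemma's formula, and the remainder of your outline---the choice of leaf, the observation via running intersection that $R_{j}$ meets no other bag, the $\varepsilon$-perturbation to sidestep pseudoinverse range conditions, and the compactness limit---is sound. Only the separator update needs to be deleted.
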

We can use Theorem~\ref{thm:prim_sep} to reformulate (\ref{eq:SDP})
into a reduced-complexity form. The key is to view (\ref{eq:SDP})
as an optimization over $P_{F}(X)$, since 
\[
C\bullet X=\sum_{i,j=1}^{n}C_{i,j}X_{i,j}=\sum_{(i,j)\in F}C_{i,j}X_{i,j}=C\bullet P_{F}(X),
\]
and similarly $A_{i}\bullet X=A_{i}\bullet P_{F}(X)$. Theorem~\ref{thm:prim_sep}
allows us to account for $X\succeq0$ implicitly, by optimizing over
$Z=P_{F}(X)$ in the following
\begin{alignat}{3}
 & \text{minimize }\quad & C\bullet Z\label{eq:CSDP}\\
 & \text{subject to } & A_{i}\bullet Z & =b_{i} & \qquad\text{for all } & i\in\{1,\ldots,m\},\nonumber \\
 &  & Z[J_{j},J_{j}] & \succeq0 & \text{for all } & j\in\{1,\ldots,\ell\}.\nonumber 
\end{alignat}
Next, we split the principal submatrices into distinct matrix variables,
coupled by the need for their overlapping elements to agree. Define
the \emph{overlap} operator $\NN_{i,j}(\cdot)$ to output the overlapping
elements of two principal submatrices given the latter as input:
\[
\NN_{i,j}(X[J_{j},J_{j}])=X[J_{i}\cap J_{j},\;J_{i}\cap J_{j}]=\NN_{j,i}(X[J_{i},J_{i}]).
\]
The running intersection property of the tree decomposition allows
us to enforce this agreement using $\ell-1$ pairwise block comparisons.
\begin{theorem}[Fukuda et al.~\cite{fukuda2001exploiting}]
\label{thm:ctree}Given $X_{1},X_{2},\ldots,X_{\ell}$ for $X_{j}\in\S^{|J_{j}|}$,
there exists $Z$ satisfying $Z[J_{j},J_{j}]=X_{j}$ for all $j\in\{1,2,\ldots,\ell\}$
if and only if $\NN_{i,j}(X_{j})=\NN_{j,i}(X_{i})$ for all $(i,j)\in E(T)$. 
\end{theorem}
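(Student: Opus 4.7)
The proof decomposes naturally into a straightforward necessity direction and a more interesting sufficiency direction, and my plan is to handle each in turn.

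For necessity, suppose $Z$ is an $n \times n$ symmetric matrix with $Z[J_j,J_j] = X_j$ for every $j$. Then for any pair of bags $J_i, J_j$ and any $u,v \in J_i \cap J_j$, the value $Z[u,v]$ equals both $X_i[\alpha,\beta]$ (where $J_i(\alpha)=u$, $J_i(\beta)=v$) and $X_j[\alpha',\beta']$ (where $J_j(\alpha')=u$, $J_j(\beta')=v$). Reading off the overlapping submatrices, this gives $\NN_{i,j}(X_j) = \NN_{j,i}(X_i)$ — not just for tree edges, but for all pairs $(i,j)$. In particular the stated edge-only condition holds.

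For sufficiency, the plan is to first \emph{promote} the pairwise agreement along tree edges to pairwise agreement along \emph{all} pairs of bags, and then use the promoted agreement to define $Z$ entrywise without ambiguity. The promotion step is where the running intersection property of Definition~\ref{def:treedecomp} is essential. Concretely, fix any two bags $J_i, J_j$ and any pair $u,v \in J_i \cap J_j$. Let $i = k_0, k_1, \ldots, k_r = j$ be the unique path from $i$ to $j$ in $T$. By running intersection, $u$ belongs to $J_{k_s}$ for every $s$, and similarly for $v$; hence the $(u,v)$-entry is recorded in every $X_{k_s}$ along the path. The hypothesis $\NN_{k_s,k_{s+1}}(X_{k_{s+1}}) = \NN_{k_{s+1},k_s}(X_{k_s})$ on each consecutive tree edge forces the recorded value to agree across successive bags, and chaining these equalities yields $X_i[\alpha,\beta] = X_j[\alpha',\beta']$ with the obvious indexing. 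Thus global pairwise agreement holds.

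With global agreement in hand, the construction of $Z$ is immediate: for each $(u,v)$ that appears jointly in at least one bag, set $Z[u,v]$ equal to the common value shared by all such bags (well-defined by the previous step); for any $(u,v)$ not covered by any bag, set $Z[u,v] = 0$ (or anything, since no constraint applies). By definition $Z[J_j, J_j] = X_j$ for every $j$, completing the argument. The main obstacle is the promotion step, but it is a clean induction on the length of the tree path enabled directly by running intersection, and no computation beyond chaining equalities is required.
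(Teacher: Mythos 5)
Your proof is correct. The paper itself offers no proof of this statement---it is quoted as a known result of Fukuda et al.\ with only a citation---so there is no in-paper argument to compare against. Your argument is the standard one: necessity is immediate from reading off entries of $Z$, and for sufficiency you use the running intersection property to propagate the edge-wise overlap agreements along the unique tree path between any two bags, which makes the entrywise definition of $Z$ unambiguous; this is exactly the mechanism the paper alludes to when it says the running intersection property allows agreement to be enforced with $\ell-1$ pairwise comparisons.
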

Splitting the objective $C$ and constraint matrices $A_{1},\ldots,A_{m}$
into $C_{1},\ldots,C_{\ell}$ and $A_{1,1},\ldots,A_{m,\ell}$ to
satisfy 
\begin{align}
C_{1}\bullet X[J_{1},J_{1}]+C_{2}\bullet X[J_{2},J_{2}]+\cdots+C_{\ell}\bullet X[J_{\ell},J_{\ell}] & =C\bullet X,\label{eq:split_constr}\\
A_{i,1}\bullet X[J_{1},J_{1}]+A_{i,2}\bullet X[J_{2},J_{2}]+\cdots+A_{i,\ell}\bullet X[J_{\ell},J_{\ell}] & =A_{i}\bullet X,\nonumber 
\end{align}
and applying Theorem~\ref{thm:ctree} yields the following
\begin{alignat}{3}
 & \text{minimize }\quad & \sum_{j=1}^{\ell}C_{j}\bullet X_{j} &  &  & \tag{CTC}\label{eq:CTC}\\
 & \text{subject to } & \sum_{j=1}^{\ell}A_{i,j}\bullet X_{j} & =b_{i} & \text{for all } & i\in\{1,\ldots,m\},\nonumber \\
 &  & \NN_{i,j}(X_{j}) & =\NN_{j,i}(X_{i}) & \qquad\text{for all } & (i,j)\in E(T),\nonumber \\
 &  & X_{j} & \succeq0 & \text{for all } & j\in\{1,\ldots,\ell\},\nonumber 
\end{alignat}
which vectorizes into a linear conic program in standard form
\begin{alignat}{4}
 & \text{minimize } & c^{T}x, &  & \qquad\qquad & \text{maximize } & \begin{bmatrix}b\\
0
\end{bmatrix}^{T}y,\label{eq:vecCTC}\\
 & \text{subject to } & \begin{bmatrix}\A\\
\N
\end{bmatrix}x & =\begin{bmatrix}b\\
0
\end{bmatrix}, &  & \text{subject to } & \begin{bmatrix}\A\\
\N
\end{bmatrix}^{T}y+s & =c,\nonumber \\
 &  & x & \in\K, &  &  & s & \in\K_{*}\nonumber 
\end{alignat}
over the Cartesian product of $\ell\le n$ smaller semidefinite cones
\begin{equation}
\K=\K_{*}=\S_{+}^{|J_{1}|}\times\S_{+}^{|J_{2}|}\times\cdots\times\S_{+}^{|J_{\ell}|}.
\end{equation}
Here, $\A=[\vector(A_{i,j})^{T}]_{i,j=1}^{m,\ell}$ and $c=[\vector(C_{j})]_{j=1}^{\ell}$
correspond to (\ref{eq:split_constr}), and the \emph{overlap constraints}
matrix $\N=[\N_{i,j}]_{i,j=1}^{\ell,\ell}$ is implicitly defined
by the relation
\begin{equation}
\N_{i,j}\vector(X_{j})=\begin{cases}
+\vector(\NN_{p(i),i}(X_{i})) & j=i,\\
-\vector(\NN_{i,p(i)}(X_{p(i)})) & j=p(i),\\
0 & \text{otherwise},
\end{cases}\label{eq:Nmat}
\end{equation}
for every non-root node $i$ on $T$. (To avoid all-zero rows in $\N$,
we define $\N_{i,j}\,\vector(X_{j})$ as the empty length-zero vector
$\R^{0}$ if $i$ is the root node.)

The converted problem (\ref{eq:CTC}) inherits the standard regularity
assumptions from (\ref{eq:SDP}). Accordingly, an interior-point method
is well-behaved in solving (\ref{eq:vecCTC}). (Proofs for the following
statements are deferred to Appendix~\ref{sec:linindepslater}.)
\begin{lemma}[Linear independence]
\label{lem:lin_indep}There exists $[u;v]\ne0$ such that $\A^{T}u+\N^{T}v=0$
if and only if there exists $y\ne0$ such that $\sum_{i}y_{i}A_{i}=0$.
\end{lemma}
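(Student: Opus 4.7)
The plan is to interpret both sides of the equivalence through Theorem~\ref{thm:ctree}, which characterizes the null space of $\N$: a stacked vector $x = [\vector(X_j)]_{j=1}^{\ell}$ lies in $\mathrm{null}(\N)$ if and only if there exists some $Z \in \S_F^n$ with $X_j = Z[J_j,J_j]$ for every $j$. Combined with the splitting identity (\ref{eq:split_constr}), namely $\sum_{j=1}^{\ell} A_{i,j} \bullet Z[J_j,J_j] = A_i \bullet Z$, both directions reduce to a short duality computation between $\A^T u$ and elements of $\mathrm{null}(\N)$.

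For the $(\Leftarrow)$ direction I would set $u = y$ and look for $v$ with $\A^T u = -\N^T v$, which is the same as $\A^T u \perp \mathrm{null}(\N)$. Given any $x = [\vector(Z[J_j,J_j])]_j \in \mathrm{null}(\N)$, the pairing unfolds as $(\A^T u)^T x = \sum_i u_i \sum_j A_{i,j} \bullet Z[J_j,J_j] = \sum_i u_i A_i \bullet Z$, which vanishes by hypothesis. Hence a suitable $v$ exists and $[u; v] = [y; v] \ne 0$.

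For the $(\Rightarrow)$ direction, suppose $\A^T u + \N^T v = 0$ with $[u; v] \ne 0$. I first rule out $u = 0$ by proving that $\N$ has linearly independent rows. This is where the tree decomposition structure genuinely enters: the rows of $\N$ split by off-diagonal entry $(a,b) \in E(F)$, and each group acts on the disjoint coordinates $\{X_j[\cdot,\cdot] : a, b \in J_j\}$. Within one group, the rows are (up to signs from $\vector$) the transposed incidence matrix of the subgraph of $T$ induced by $\{j : a, b \in J_j\}$, which is a subtree by the running intersection property of Definition~\ref{def:treedecomp}. Since the incidence matrix of a tree on $k$ nodes has rank $k-1$, each group has full row rank, so $\N$ does too, forcing $u \ne 0$. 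Setting $y = u$, for any $X \in \S^n$ the stacked projection $x = [\vector(P_F(X)[J_j,J_j])]_j$ lies in $\mathrm{null}(\N)$ by Theorem~\ref{thm:ctree}, so $\sum_i u_i A_i \bullet X = (\A^T u)^T x = -(\N^T v)^T x = -v^T(\N x) = 0$; arbitrariness of $X$ yields $\sum_i y_i A_i = 0$.

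The main obstacle is the full-row-rank argument for $\N$ in the $(\Rightarrow)$ direction. Everything else follows from pure duality between the splitting identity (\ref{eq:split_constr}) and Theorem~\ref{thm:ctree}, but non-redundancy of the overlap constraints genuinely requires the running intersection property of the tree decomposition together with the rank of a tree-incidence matrix.
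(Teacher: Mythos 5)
Your proof is correct, and its skeleton matches the paper's: rule out $u=0$ by showing $\N$ has full row rank, then pass between $\sum_i y_iA_i$ and $\A^Tu$ using the fact that $\mathrm{null}(\N)$ consists exactly of the stacked vectors $[\vector(Z[J_j,J_j])]_j$ (Theorem~\ref{thm:ctree}) together with the splitting identity (\ref{eq:split_constr}). The paper packages these two ingredients as Lemma~\ref{lem:row-rank} and Lemma~\ref{lem:orth} (the latter introduces the explicit stacking matrix $\P$ with $\N\P=0$ and $\R^d=\mathrm{range}(\P)\oplus\mathrm{range}(\N^T)$, and the splitting identity becomes $\A\P=\M$); your argument works with $\mathrm{null}(\N)$ and its orthogonal complement directly, which is the same content. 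The one place you genuinely diverge is the full-row-rank sub-lemma: the paper orders the block rows of $\N$ topologically so that each block row $j$ has its nonzero blocks only in block columns $j$ and $p(j)>j$, and concludes from the surjectivity of each diagonal block $\N_{j,j}$; you instead group the scalar rows by matrix entry $(a,b)$, observe that each group is the edge-node incidence matrix of the subtree of $T$ induced by $\{j:a,b\in J_j\}$ (a subtree by running intersection), and use that a tree's incidence matrix has full row rank, with disjointness of the coordinate supports across groups. Both are valid; the paper's is shorter, while yours isolates precisely where the running intersection property enters and gives an exact per-entry rank count. No gaps.
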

\begin{lemma}[Primal Slater]
\label{lem:slaterp}There exists $x\in\Int(\K)$ satisfying $\A x=b$
and $\N x=0$ if and only if there exists an $X\succ0$ satisfying
$A_{i}\bullet X=b_{i}$ for all $i\in\{1,\ldots,m\}$.
\end{lemma}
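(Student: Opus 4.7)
The plan is to prove the equivalence by directly translating strict feasibility in (\ref{eq:vecCTC}) back and forth to strict feasibility in (\ref{eq:SDP}), using the two structural theorems already available in the excerpt: Theorem~\ref{thm:ctree} merges the split blocks $\{X_j\}$ into a single matrix $Z\in\S_F^n$ once the overlap constraints hold, and Theorem~\ref{thm:prim_sep} completes $Z$ into a globally positive definite $X$ once each clique principal submatrix is positive definite.

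I would handle the reverse direction first, since it is essentially tautological. Given $X\succ 0$ with $A_i\bullet X=b_i$, set $X_j=X[J_j,J_j]$ so that $x=[\vector(X_j)]_{j=1}^\ell$ lies in $\Int(\K)$ because every principal submatrix of a strictly positive definite matrix is strictly positive definite. The overlap constraint $\N x=0$ is immediate because all $X_j$ are restrictions of a common matrix, and $\A x=b$ follows directly from the splitting identity (\ref{eq:split_constr}).

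The forward direction carries the content. Given $X_1,\ldots,X_\ell\succ 0$ satisfying the overlap and linear constraints, I would first invoke Theorem~\ref{thm:ctree} to produce a matrix $Z$ with $Z[J_j,J_j]=X_j$ for every $j$; by zeroing out the entries of $Z$ outside $E(F)$ we may assume $Z\in\S_F^n$. I would then apply Theorem~\ref{thm:prim_sep} to obtain a completion $X$ with $P_F(X)=Z$. Finally, using that each $A_i$ is supported on $E(F)$ together with (\ref{eq:split_constr}), the chain $A_i\bullet X=A_i\bullet P_F(X)=A_i\bullet Z=\sum_j A_{i,j}\bullet X_j=b_i$ recovers the original primal constraints of (\ref{eq:SDP}).

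The main obstacle is that Theorem~\ref{thm:prim_sep}, as stated, yields only a positive \emph{semidefinite} completion $X\succeq 0$, whereas the lemma demands $X\succ 0$. I would close this gap by invoking the standard strict analogue of Grone's theorem: because $F$ is chordal by construction (its maximal cliques are precisely the bags $J_j$), the maximum-determinant completion exists and is strictly positive definite whenever every clique principal submatrix is strictly positive definite. This strengthening is recorded, for example, in Vandenberghe and Andersen's monograph on chordal matrix decomposition, and with it in hand the completion $X$ produced above is strictly positive definite, completing the proof.
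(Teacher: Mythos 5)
Your proposal is correct and follows essentially the same route as the paper: both directions reduce to merging the blocks via the running-intersection machinery (the paper packages Theorem~\ref{thm:ctree} as its orthogonal-complement lemma, you invoke it directly) and then applying the strict positive definite version of Grone's completion theorem, citing the same strengthening in Vandenberghe--Andersen that the paper uses to close the $X\succeq0$ versus $X\succ0$ gap. The only cosmetic difference is your aside that the maximal cliques of $F$ are "precisely" the bags $J_j$ (they are merely among the bags, since some bags may be nested), but this does not affect the argument.
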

\begin{lemma}[Dual Slater]
\label{lem:slaterd}There exists $u,v$ satisfying $c-\A^{T}u-\N^{T}v\in\Int(\K_{*})$
if and only if there exists $y$ satisfying $C-\sum_{i}y_{i}A_{i}\succ0$.
\end{lemma}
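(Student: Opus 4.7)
The plan is to recognize the range of $\N^{T}$ as the space of ``zero splittings'' and reduce the lemma to an existence statement about strictly positive definite decompositions of chordal matrices. Call a tuple $(s_{j})_{j}\in\prod_{j}\S^{|J_{j}|}$ a \emph{splitting} of $S\in\S^{n}$ if $\sum_{j}s_{j}\bullet X[J_{j},J_{j}]=S\bullet X$ for every $X\in\S^{n}$. The first step is to show that any two splittings of the same $S$ differ by a vector in $\mathrm{Range}(\N^{T})$, and conversely every element of $\mathrm{Range}(\N^{T})$ is a splitting of $0$. This follows by taking adjoints in Theorem~\ref{thm:ctree}, which identifies the kernel of $\N$ with the range of the restriction map $X\mapsto(X[J_{j},J_{j}])_{j}$.

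The reverse direction is then direct. Assuming $(u,v)$ satisfies $s_{j}:=C_{j}-\sum_{i}u_{i}A_{i,j}-(\N^{T}v)_{j}\succ 0$ for each $j$, set $y=u$ and $S=C-\sum_{i}y_{i}A_{i}$. Pairing the defining identities for $s_{j}$ against an arbitrary $X\in\S^{n}$, and using both the splitting identities (\ref{eq:split_constr}) and the zero-splitting property of $\N^{T}v$, confirms that $(s_{j})_{j}$ is itself a splitting of $S$. Taking $\alpha>0$ with $s_{j}\succeq\alpha I$ for all $j$ and invoking the vertex cover axiom $d(i):=|\{k:i\in J_{k}\}|\ge 1$, I obtain $S\bullet X\ge\alpha\,\tr X$ for every $X\succeq 0$, hence $S\succeq\alpha I\succ 0$.

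For the forward direction, start from $y$ with $S=C-\sum_{i}y_{i}A_{i}\succ 0$, set $u=y$, and note that $s_{j}^{0}:=C_{j}-\sum_{i}u_{i}A_{i,j}$ is already a splitting of $S$. By the characterization of $\mathrm{Range}(\N^{T})$ above, finding $v$ with $s_{j}^{0}-(\N^{T}v)_{j}\succ 0$ is equivalent to producing \emph{any} splitting of $S$ into strictly positive definite blocks. Such a splitting exists by the strict form of the Agler--Grone decomposition for chordal positive semidefinite matrices: since the bags of $\T$ are cliques in the chordal graph $F$ of (\ref{eq:chordal_compl}) and $S\in\S_{F}^{n}$, the classical decomposition (the dual companion to Theorem~\ref{thm:prim_sep}) writes $S-\epsilon I=\sum_{j}\Pi_{j}^{T}M_{j}\Pi_{j}$ with $M_{j}\succeq 0$ for sufficiently small $\epsilon>0$, where $\Pi_{j}$ selects the rows indexed by $J_{j}$. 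The residual $\epsilon I$ is absorbed via the partition-of-unity identity $I=\sum_{j}\Pi_{j}^{T}W_{j}\Pi_{j}$ with $W_{j}=\diag(1/d(i))_{i\in J_{j}}\succ 0$, yielding the strictly PD splitting $s_{j}=M_{j}+\epsilon W_{j}$.

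The main obstacle is invoking the strict Agler--Grone decomposition; everything else is linear algebra. The PSD version of the decomposition is the classical companion to Theorem~\ref{thm:prim_sep} already used in the paper, and the passage from PSD summands to strictly PD summands is handled cleanly by the $\epsilon$-perturbation and partition-of-unity trick, which crucially relies on the vertex cover axiom of tree decompositions to guarantee $d(i)\ge 1$ and hence $W_{j}\succ 0$.
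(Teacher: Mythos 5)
Your proof is correct and follows essentially the same route as the paper's: both directions ultimately rest on the Agler--Grone chordal decomposition (the paper's dual theorem (\ref{eq:dual_sep})) together with the identification of $\mathrm{Range}(\N^{T})$ with the orthogonal complement of $\mathrm{Range}(\P)$, i.e.\ the space of zero splittings (the paper's Lemma~\ref{lem:orth}). The only differences are cosmetic: you spell out the passage from the positive semidefinite decomposition to the strictly positive definite one via the $\epsilon I$ partition-of-unity trick, and you prove the easy direction by an elementary trace bound, whereas the paper delegates both points to the cited strict form of the decomposition theorem.
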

After an $\epsilon$-accurate solution $X_{1}^{\star},\ldots,X_{\ell}^{\star}$
to (\ref{eq:CTC}) is found, we recover, in closed-form, a \emph{positive
semidefinite completion} $X^{\star}\succeq0$ satisfying $X^{\star}[J_{j},J_{j}]=X_{j}^{\star}$,
which in turn serves as an $\epsilon$-accurate solution to (\ref{eq:SDP}).
Of all possible choices of $X^{\star}$, a particularly convenient
one is the \emph{low-rank completion} $X^{\star}=UU^{T}$, in which
$U$ is a dense matrix with $n$ rows and at most $\omega=\max_{j}|J_{j}|$
columns. While the existence of the low-rank completion was known
since Dancis~\cite{dancis1992positive} (see also Laurent and Varvitsiotis~\cite{laurent2014new}
and Madani et al.~\cite{madani2017finding}), Sun~\cite[Algorithm~2]{sun2015decomposition}
gave an explicit algorithm to compute $U^{\star}$ from $X_{1}^{\star},\ldots,X_{\ell}^{\star}$
in $O(\omega^{3}n)$ time and $O(\omega^{2}n)$ memory. The practical
effectiveness of Sun's algorithm was later validated on a large array
of power systems problems by Jiang~\cite{jiang2017minimum}.

\section{\label{sec:Cost-of-an}Cost of an interior-point iteration on (\ref{eq:CTC})}

When the vectorized version (\ref{eq:vecCTC}) of the converted problem
(\ref{eq:CTC}) is solved using an interior-point method, the cost
of each iteration is dominated by the cost of forming and solving
the \emph{normal equation} (also known as the \emph{Schur complement
equation}) 
\begin{equation}
\begin{bmatrix}\A\\
\N
\end{bmatrix}\D_{s}\begin{bmatrix}\A\\
\N
\end{bmatrix}^{T}\Delta y=\begin{bmatrix}\A\D_{s}\A^{T} & \A\D_{s}\N^{T}\\
\N\D_{s}\A^{T} & \N\D_{s}\N^{T}
\end{bmatrix}\begin{bmatrix}\Delta y_{1}\\
\Delta y_{2}
\end{bmatrix}=\begin{bmatrix}r_{1}\\
r_{2}
\end{bmatrix},\label{eq:Hxr}
\end{equation}
where the \emph{scaling matrix} $\D_{s}$ is block-diagonal with fully-dense
blocks 
\begin{equation}
\D_{s}=\diag(\D_{s,1},\ldots,\D_{s,\ell}),\qquad\D_{s,j}\succ0\quad\text{for all }j\in\{1,\ldots,\ell\}.\label{eq:scal}
\end{equation}
Typically, each dense block in $\D_{s}$ is the Hessian of a log-det
penalty, as in $\D_{s,j}=\nabla^{2}[\log\det(X_{j})]$. The submatrix
$\A\D_{s}\A^{T}$ is often sparse~\cite{sun2014decomposition}, with
a sparsity pattern that coincides with the \emph{correlative sparsity}~\cite{kobayashi2008correlative}
of the problem.

Unfortunately, $\N\D_{s}\N^{T}$ can be fully-dense, even when $\A\D_{s}\A^{T}$
is sparse or even diagonal. To explain, observe from (\ref{eq:Nmat})
that the block sparsity pattern of $\N=[\N_{i,j}]_{i,j=1}^{\ell,\ell}$
coincides with the \emph{incidence matrix} of the tree decomposition
tree $T$. Specifically, for every $i$ with parent $p(i)$, the block
$\N_{i,j}$ is nonzero if and only if $j\in\{i,p(i)\}$. As an immediate
corollary, the block sparsity pattern of $\N\D_{s}\N^{T}$ coincides
with the adjacency matrix of the \emph{line graph} of $T$:
\begin{equation}
\sum_{k=1}^{\ell}\N_{i,k}\D_{s,k}\N_{j,k}^{T}\ne0\quad\iff\quad j\in\{i,p(i)\}\text{ or }p(j)\in\{i,p(i)\}.
\end{equation}
The line graph of a tree is not necessarily sparse. If $T$ were the
star graph on $n$ vertices, then its associated line graph $\varL(T)$
would be the complete graph on $n-1$ vertices. Indeed, consider the
following example.
\begin{example}[Star graph]
\label{exa:star}Given $b\in\R^{n}$, embed $\max\{b^{T}y:\|y\|\le1\}$
into the order-$(n+1)$ semidefinite program: 
\begin{alignat*}{3}
 & \text{minimize } & \tr(X)\\
 & \text{subject to } & X[i,(n+1)] & =b_{i} & \quad\text{for all }i\in\{1,\ldots,n\}\\
 &  & X & \succeq0
\end{alignat*}
The associated sparsity graph $G$ is the star graph on $n+1$ nodes,
and its elimination tree $\T=(\{J_{1},\ldots,J_{n}\},T)$ has index
sets $J_{j}=\{j,n+1\}$ and parent pointer $p(j)=n$. Applying clique
tree conversion and vectorizing yields an instance of (\ref{eq:vecCTC})
with
\begin{align*}
\A & =\begin{bmatrix}e_{2}^{T} &  & 0\\
 & \ddots\\
0 &  & e_{2}^{T}
\end{bmatrix}, & \N & =\begin{bmatrix}e_{3}^{T} &  & 0 & -e_{3}^{T}\\
 & \ddots &  & \vdots\\
0 &  & e_{3}^{T} & -e_{3}^{T}
\end{bmatrix},
\end{align*}
where $e_{j}$ is the $j$-th column of the $3\times3$ identity matrix.
It is straightforward to verify that $\A\D_{s}\A^{T}$ is $n\times n$
diagonal but $\N\D_{s}\N^{T}$ is $(n-1)\times(n-1)$ fully dense
for the $\D_{s}$ in (\ref{eq:scal}). The cost of solving the corresponding
normal equation (\ref{eq:Hxr}) must include the cost of factoring
this fully dense submatrix, which is at least $(n-1)^{3}/3$ operations
and $(n-1)^{2}/2$ units of memory. \qed
\end{example}
On the other hand, observe that the block sparsity graph of $\N^{T}\N$
coincides with the tree graph $T$
\begin{equation}
\sum_{k=1}^{\ell}\N_{k,i}^{T}\N_{k,j}\ne0\quad\iff\quad i=j\text{ or }(i,j)\in E(T).
\end{equation}
Such a matrix is guaranteed to be \emph{block} sparse: sparse over
dense blocks. More importantly, after a \emph{topological} block permutation
$\Pi$, the matrix $\Pi(\N^{T}\N)\Pi^{T}$ factors into $\L\L^{T}$
with \emph{no block fill}.
\begin{definition}[Topological ordering]
\label{def:topological}An ordering\emph{ }$\pi:\{1,2,\ldots,n\}\to V(T)$
on the tree graph $T$ with $n$ nodes is said to be \emph{topological}~\cite[p.~10]{vandenberghe2015chordal}
if, by designating $\pi(n)$ as the root of $T$, each node is indexed
before its parent:
\[
\pi^{-1}(v)<\pi^{-1}(p(v))\qquad\text{ for all }v\ne r,
\]
where $\pi^{-1}(v)$ denotes the index associated with the node $v$.
\end{definition}
\begin{lemma}[No block fill]
\label{lem:blocktree}Let $J_{1},\ldots,J_{n}$ satisfy $\bigcup_{j=1}^{n}J_{j}=\{1,\ldots,d\}$
and $J_{i}\cap J_{j}=\emptyset$ for all $i\ne j$, and let $H\succ0$
be a $d\times d$ matrix satisfying
\[
H[J_{i},J_{j}]\ne0\qquad\implies\qquad(i,j)\in E(T)
\]
for a tree graph $T$ on $n$ nodes. If $\pi$ is a topological ordering
on $T$ and $\Pi$ is a permutation matrix satisfying 
\[
(\Pi H\Pi)[J_{i},J_{j}]=H[J_{\pi(i)},J_{\pi(j)}]\qquad\text{for all }i,j\in\{1,\ldots,n\},
\]
then $\Pi H\Pi^{T}$ factors into $LL^{T}$ where the Cholesky factor
$L$ satisfies
\[
L[J_{i},J_{j}]\ne0\qquad\implies\qquad(\Pi H\Pi)[J_{i},J_{j}]\ne0\qquad\text{ for all }i>j.
\]
Therefore, sparse Cholesky factorization solves $Hx=b$ for $x$ by:
(i) factoring $\Pi H\Pi^{T}$ into $LL^{T}$ in $O(\beta^{3}n)$ operations
and $O(\beta^{2}n)$ memory where $\beta=\max_{j}|J_{j}|$, and (ii)
solving $Ly=\Pi b$ and $L^{T}z=y$ and $x=\Pi^{T}z$ in $O(\beta^{2}n)$
operations and memory. 
\end{lemma}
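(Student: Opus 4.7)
The plan is to prove this by an induction on the block Cholesky elimination steps, exploiting the fact that a topological ordering on a tree eliminates leaves one at a time. The key observation is that when a leaf of a tree is block-eliminated, its only neighbor is its parent, so the Schur complement update modifies exactly one diagonal block and produces no new off-diagonal block fill.

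First I would verify that the topological property of $\pi$ means that, for each $k \in \{1, \ldots, n-1\}$, the node $\pi(k)$ is a leaf of the induced subtree $T_k = T[\{\pi(k), \pi(k+1), \ldots, \pi(n)\}]$. This follows because $\pi(k)$ is indexed before its parent, so its parent is still in $T_k$, while all of its children are indexed earlier and therefore have already been removed. Hence $\pi(k)$ has exactly one neighbor in $T_k$, namely its parent $p(\pi(k))$. In block terms, the block-column $(\Pi H \Pi^T)[:, J_k]$ has at most one nonzero off-diagonal block, in the block-row associated with $p(\pi(k))$.

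Next I would carry out the induction. Set $H^{(1)} = \Pi H \Pi^T$ and define $H^{(k+1)}$ to be the block Schur complement obtained after eliminating the $k$-th block. The inductive hypothesis is that $H^{(k)}$ has block-sparsity pattern coinciding with $T_k$, and $H^{(k)}[J_k, J_k] \succ 0$ (the latter following from $H \succ 0$ and standard properties of Schur complements). The block-Cholesky step sets $L[J_k, J_k]$ as the Cholesky factor of $H^{(k)}[J_k, J_k]$, computes $L[J_q, J_k] = H^{(k)}[J_q, J_k] L[J_k, J_k]^{-T}$ for the unique off-diagonal neighbor $q$, and updates $H^{(k+1)}[J_q, J_q] = H^{(k)}[J_q, J_q] - L[J_q, J_k] L[J_q, J_k]^{T}$. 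Since only a single off-diagonal block in block-column $k$ is nonzero, the rank-revealing update touches only the $(q,q)$ diagonal block of $H^{(k+1)}$, so no block off the original tree sparsity pattern is filled in. Restricted to the remaining block-indices, $H^{(k+1)}$ therefore inherits the sparsity pattern of $T_{k+1}$, closing the induction and establishing the claimed zero block fill-in.

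Finally, the complexity follows by counting blocks: $L$ has $n$ diagonal blocks and at most $n-1$ off-diagonal blocks, one per edge of $T$, each of size at most $\beta \times \beta$. Each block Cholesky factorization, triangular solve, or rank update on such a block costs $O(\beta^3)$, giving a total factorization cost of $O(\beta^3 n)$ and storage cost $O(\beta^2 n)$. The forward and backward substitutions $Ly = \Pi b$ and $L^T z = y$ reduce to $O(n)$ block-triangular solves and block matrix-vector multiplications against $O(n)$ blocks, each costing $O(\beta^2)$, for a total of $O(\beta^2 n)$. The main technical subtlety is keeping careful track of the inductive invariants, in particular the positive definiteness of each pivot block $H^{(k)}[J_k, J_k]$, and confirming that the block-sparsity bookkeeping faithfully mirrors the well-known scalar zero-fill property of perfect elimination orderings on trees; once this translation is in place, the argument is essentially routine.
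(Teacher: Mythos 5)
Your proof is correct: the leaf-elimination induction under a topological ordering, with the observation that each Schur-complement update touches only the parent's diagonal block, is precisely the classical Parter/George--Liu tree-elimination argument, extended blockwise. The paper does not spell out a proof at all---it simply cites Parter and George--Liu (Lemma 6.3.1) for exactly this argument---so your proposal faithfully reconstructs the intended reasoning, including the complexity count.
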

This is a simple block-wise extension of the tree elimination result
originally due to Parter~\cite{parter1961use}; see also George and
Liu~\cite[Lemma 6.3.1]{george1981computer}. In practice, a topological
ordering can be found by assigning indices $n,n-1,n-2,\ldots$ in
decreasing ordering during a depth-first search traversal of the tree.
In fact, the minimum degree heuristic is guaranteed to generate a
topological ordering~\cite{george1989evolution}.

One way of exploiting the favorable block sparsity of $\N^{T}\N$
is to view the normal equation (\ref{eq:Hxr}) as the Schur complement
equation to an augmented system with $\epsilon=0$:
\begin{equation}
\begin{bmatrix}\D_{s}^{-1} & \A^{T} & \N^{T}\\
\A & -\epsilon I & 0\\
\N & 0 & -\epsilon I
\end{bmatrix}\begin{bmatrix}\Delta x\\
\Delta y_{1}\\
\Delta y_{2}
\end{bmatrix}=\begin{bmatrix}0\\
-r_{1}\\
-r_{2}
\end{bmatrix}.\label{eq:nrmaug}
\end{equation}
Instead, we can solve the \emph{dual} Schur complement equation for
$\epsilon>0$
\begin{equation}
\left(\D_{s}^{-1}+\frac{1}{\epsilon}\A^{T}\A+\frac{1}{\epsilon}\N^{T}\N\right)\Delta x=-\frac{1}{\epsilon}\A^{T}r_{1}-\frac{1}{\epsilon}\A^{T}r_{2}\label{eq:dualschur}
\end{equation}
and recover an approximate solution. Under suitable sparsity assumptions
on $\A^{T}\A$, the block sparsity graph of the matrix in (\ref{eq:dualschur})
coincides with that of $\N^{T}\N$, which is itself the tree graph
$T$. Using sparse Cholesky factorization with a topological block
permutation, we solve (\ref{eq:dualschur}) in linear time and back
substitute to obtain a solution to (\ref{eq:nrmaug}) in linear time.
In principle, a sufficiently small $\epsilon>0$ will approximate
the exact case at $\epsilon=0$ to arbitrary accuracy, and this is
all we need for the outer interior-point method to converge in polynomial
time.

A more subtle way to exploit the block sparsity of $\N^{T}\N$ is
to reformulate (\ref{eq:CTC}) into a form whose normal equation is
exactly (\ref{eq:dualschur}). As we show in the next section, this
is achieved by a simple technique known as dualization.

\section{\label{sec:dctc}Dualized clique tree conversion}

The dualization technique of Löfberg~\cite{lofberg2009dualize} swaps
the roles played by the primal and the dual problems in a linear conic
program, by rewriting a primal standard form problem into dual standard
form, and vice versa. Applying dualization to (\ref{eq:vecCTC}) yields
the following
\begin{alignat}{4}
 & \text{minimize } & \begin{bmatrix}b\\
0
\end{bmatrix}^{T}x_{1} &  & \qquad & \text{maximize } & -c^{T}y\label{eq:dualCTC}\\
 & \text{subject to } & \begin{bmatrix}\A\\
\N
\end{bmatrix}^{T}x_{1}-x_{2} & =-c, &  & \text{subject to } & \begin{bmatrix}\A\\
\N
\end{bmatrix}y+s_{1} & =\begin{bmatrix}b\\
0
\end{bmatrix},\nonumber \\
 &  & x_{1}\in\R^{f},\,x_{2} & \in\K. &  &  & -y+s_{2} & =0,\nonumber \\
 &  &  &  &  &  & s_{1}\in\{0\}^{f},\,s_{2} & \in\K.\nonumber 
\end{alignat}
where we use $f$ to denote the number of equality constraints in
(\ref{eq:CTC}). Observe that the dual problem in (\ref{eq:dualCTC})
is identical to the primal problem in (\ref{eq:vecCTC}), so that
a dual solution $y^{\star}$ to (\ref{eq:dualCTC}) immediately serves
as a primal solution to (\ref{eq:vecCTC}), and hence also (\ref{eq:CTC}).

Modern interior-point methods solve (\ref{eq:dualCTC}) by embeding
the free variable $x_{1}\in\R^{f}$ and fixed variable $s_{1}\in\{0\}^{f}$
into a second-order cone (see Sturm~\cite{sturm1999using} and Andersen~\cite{andersen2002handling}):
\begin{alignat}{4}
 & \text{minimize } & \begin{bmatrix}b\\
0
\end{bmatrix}^{T}x_{1} &  & \qquad & \text{maximize } & -c^{T}y\label{eq:dualCTC-2}\\
 & \text{subject to } & \begin{bmatrix}\A\\
\N
\end{bmatrix}^{T}x_{1}-x_{2} & =-c, &  & \text{subject to } & \begin{bmatrix}\A\\
\N
\end{bmatrix}y+s_{1} & =\begin{bmatrix}b\\
0
\end{bmatrix},\nonumber \\
 &  & \|x_{1}\|\le x_{0},\,x_{2} & \in\K. &  &  & -y+s_{2} & =0,\nonumber \\
 &  &  &  &  &  & s_{0} & =0,\nonumber \\
 &  &  &  &  &  & \|s_{1}\|\le s_{0},\,s_{2} & \in\K.\nonumber 
\end{alignat}
When (\ref{eq:dualCTC-2}) is solved using an interior-point method,
the normal equation solved at each iteration takes the form
\begin{equation}
\left(\D_{s}+\begin{bmatrix}\A\\
\N
\end{bmatrix}^{T}\D_{f}\begin{bmatrix}\A\\
\N
\end{bmatrix}\right)\Delta y=r\label{eq:Hxr_dual}
\end{equation}
where $\D_{s}$ is comparable as before in (\ref{eq:scal}), and 
\begin{equation}
\D_{f}=\sigma I+ww^{T},\qquad\sigma>0\label{eq:scal_f}
\end{equation}
is the rank-1 perturbation of a scaled identity matrix. The standard
procedure, as implemented in SeDuMi~\cite{sturm1999using,sturm2002implementation}
and MOSEK~\cite{andersen2003implementing}, is to form the sparse
matrix $\H$ and dense vector $\q$, defined
\begin{align}
\H & =\D_{s}+\sigma\A^{T}\A+\sigma\N^{T}\N, & \q & =\begin{bmatrix}\A^{T} & \N^{T}\end{bmatrix}w.\label{eq:Hqdef}
\end{align}
and then solve (\ref{eq:Hxr_dual}) using a rank-1 update\footnote{To keep our derivations simple, we perform the rank-1 update using
the Sherman\textendash Morrison\textemdash Woodbury (SMW) formula.
In practice, the product-form Cholesky Factor (PFCF) formula of Goldfarb
and Scheinberg~\cite{goldfarb2005product} is more numerically stable
and more widely used~\cite{sturm1999using,sturm2002implementation}.
Our complexity results remain valid in either cases because the PFCF
is a constant factor of approximately two times slower than the SWM~\cite{goldfarb2005product}.}
\begin{equation}
\Delta y=(\H+\q\q^{T})^{-1}r=\left(I-\frac{(\H^{-1}\q)\q^{T}}{1+\q^{T}(\H^{-1}\q)}\right)\H^{-1}r,\label{eq:update}
\end{equation}
at a cost comparable to the solution of $\H u=r$ for two right-hand
sides. (In Appendix~\ref{sec:inequality}, we repeat these derivations
for the version of (\ref{eq:vecCTC}) in which $\A x=b$ is replaced
by the inequality $\A x\le b$.)

The matrix $\H$ is \emph{exactly} the dual Schur complement derived
in (\ref{eq:dualschur}) with $\sigma=1/\epsilon$. If the $\A^{T}\A$
shares its block sparsity pattern with $\N^{T}\N$, then the block
sparsity graph of $\H$ coincides with the tree graph $T$, and $\H u=r$
can be solved in linear time. The cost of making the rank-1 update
is also linear time, so the cost of solving the normal equation is
linear time. 
\begin{lemma}[Linear-time normal equation]
\label{lem:normal}Let there exist $v_{i}\in V(T)$ for each $i\in\{1,\ldots,m\}$
such that
\begin{equation}
A_{i,j}\ne0\qquad\implies\qquad j=v_{i}\text{ or }j=p(v_{i}).\label{eq:Asparsity}
\end{equation}
Define $\H$ and $\q$ according to (\ref{eq:Hqdef}). Then, under
Assumption~\ref{ass:lin}:
\begin{enumerate}
\item (Forming) It costs $O(\omega^{6}n)$ time and $O(\omega^{4}n)$ space
to form $\H$ and $\q$, where $\omega=\max_{j}|J_{j}|=1+\wid(\T)$.
\item (Factoring) Let $\pi$ be a topological ordering on $T$, and define
the associated block topological permutation $\Pi$ as in Lemma~\ref{lem:blocktree}.
Then, it costs $O(\omega^{6}n)$ time and $O(\omega^{4}n)$ space
to factor $\Pi\H\Pi^{T}$ into $\L\L^{T}$. 
\item (Solving) Given $r$, $\q$, $\Pi$, and the Cholesky factor $\L$
satisfying $\L\L^{T}=\Pi\H\Pi^{T}$, it costs $O(\omega^{4}n)$ time
and space to solve $(\H+\q\q^{T})u=r$ for $u$.
\end{enumerate}
\end{lemma}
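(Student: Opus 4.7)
The plan is to first establish that $\H$ is \emph{block-sparse} over the tree graph $T$, after which parts (2) and (3) follow immediately from Lemma~\ref{lem:blocktree}, and part (1) reduces to accounting local contributions to each block. For the structural claim, viewing $\H$ as a block matrix indexed by the cone factors $\{1,\ldots,\ell\}$ with blocks of dimension $|J_i|(|J_i|+1)/2 = O(\omega^2)$, I would show that both $\A^T\A$ and $\N^T\N$ have block sparsity patterns contained in the diagonal plus edges of $T$. For $\N^T\N$, the definition (\ref{eq:Nmat}) gives $\N_{k,j}\neq 0$ only when $j\in\{k,p(k)\}$, so $(\N^T\N)_{i,j}=\sum_k \N_{k,i}^T\N_{k,j}$ vanishes unless $\{i,j\}\subseteq\{k,p(k)\}$ for some $k$, i.e.\ unless $i=j$ or $(i,j)\in E(T)$. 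For $\A^T\A$, hypothesis (\ref{eq:Asparsity}) confines the nonzero blocks of each row of $\A$ to the two columns $v_i$ and $p(v_i)$, forcing the same pattern. Since $\D_s$ is block-diagonal, the block sparsity graph of $\H$ coincides with $T$.

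With that in hand, parts (2) and (3) are direct applications of Lemma~\ref{lem:blocktree}. Positive definiteness of $\H$ (required by that lemma) follows from $\D_s\succ 0$ in (\ref{eq:scal}) and $\sigma(\A^T\A+\N^T\N)\succeq 0$. The block size is $\beta=O(\omega^2)$, so a topological permutation $\Pi$ incurs no block fill and factoring $\Pi\H\Pi^T=\L\L^T$ costs $O(\beta^3 n)=O(\omega^6 n)$ time and $O(\beta^2 n)=O(\omega^4 n)$ storage. Each triangular solve then costs $O(\beta^2 n)=O(\omega^4 n)$. The rank-1 update formula (\ref{eq:update}) involves two such solves (one for $\H^{-1}\q$, one for $\H^{-1}r$) plus a constant number of inner products and vector updates on vectors of length $O(\omega^2 n)$, all within the stated $O(\omega^4 n)$ budget.

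For part (1), I would form $\H$ by accumulating local contributions. For each non-root node $k$, the blocks $\N_{k,k}$ and $\N_{k,p(k)}$ are dense of size $O(\omega^2)\times O(\omega^2)$; computing the four products $\N_{k,i}^T\N_{k,j}$ for $i,j\in\{k,p(k)\}$ costs $O(\omega^6)$ per node and thus $O(\omega^6 n)$ overall. Each linear constraint $i$ contributes rank-one updates $\vector(A_{i,p})\vector(A_{i,q})^T$ for $p,q\in\{v_i,p(v_i)\}$, each costing $O(\omega^4)$; by Assumption~\ref{ass:lin} there are $m\le\omega n$ such constraints, contributing $O(\omega^5 n)$, which is dominated by the $\N^T\N$ term. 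Adding $\D_s$ and assembling $\q=\A^T w_\A+\N^T w_\N$ (for the partition $w=[w_\A;w_\N]$) is $O(\omega^4 n)$ by traversing the same nonzero structure. Storage stays at $O(\omega^4 n)$ because there are $O(n)$ nonzero blocks of $O(\omega^4)$ entries each.

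The main obstacle is the bookkeeping in part (1): establishing that the $O(\omega^6)$-per-node cost of the dense block products in $\N^T\N$ dominates all other contributions, and verifying that the storage never exceeds $O(\omega^4 n)$ despite the apparent size $O(\omega^2 n)\times O(\omega^2 n)$ of $\H$. Once the block sparsity of $\H$ is proved, the rest of the proof is essentially an invocation of Lemma~\ref{lem:blocktree} plus the standard analysis of the Sherman--Morrison rank-one update.
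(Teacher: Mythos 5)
Your proposal is correct and follows essentially the same route as the paper: establish that the block sparsity graph of $\H$ coincides with $T$ (using (\ref{eq:Nmat}) for $\N^T\N$ and (\ref{eq:Asparsity}) for $\A^T\A$), invoke Lemma~\ref{lem:blocktree} with block size $\beta=O(\omega^2)$ for factoring and substitution, and handle the rank-one term via the Sherman--Morrison update at the cost of one extra triangular solve. The only cosmetic difference is that the paper accumulates $\H$ row-by-row via outer products $\xi_i\xi_i^T$ of the rows of $[\A;\N]$ (each with at most $2\omega^2$ nonzeros), whereas you accumulate node-by-node over the blocks of $\N$ and constraint-by-constraint over $\A$; both yield the same $O(\omega^6 n)$ time and $O(\omega^4 n)$ space bounds.
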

\begin{proof}
For an instance of (\ref{eq:CTC}), denote $\ell=|\mathcal{J}|\le n$
as its number of conic constraints, and $d=\frac{1}{2}\sum_{j=1}^{\ell}|J_{j}|(|J_{j}|+1)\le\omega^{2}\ell$
as its total number of variables. Under linear independence (Assumption~\ref{ass:lin}),
the constraint matrix $[\A;\N]$ associated with (\ref{eq:CTC}) has
$d$ columns and at most $d$ rows (Lemma~\ref{lem:lin_indep}).
Write $\xi_{i}^{T}$ as the $i$-th row of $[\A;\N]$, and assume
without loss of generality that $[\A;\N]$ has exactly $d$ rows.
Observe that $\nnz(\xi_{i})\le\omega(\omega+1)$ by the definition
of $\N$ (\ref{eq:Nmat}) and the hypothesis on $\A$ via (\ref{eq:Asparsity}),
so $\nnz([\A;\N])\le2\omega^{4}\ell$. 

\textbf{(i)} We form $\H$ by setting $\H\gets\D_{s}$ and then adding
$\H\gets\H+\sigma\xi_{i}\xi_{i}^{T}$ one at a time, for $i\in\{1,2,\ldots,d\}$.
The first step forms $\D_{s}=\diag(\D_{s}^{(1)},\D_{s}^{(2)},\ldots,\D_{s}^{(\ell)})$
where $\D_{s}^{(j)}=W_{j}\otimes W_{j}$. Each $\nnz(\D_{s}^{(j)})=\nnz(W_{j})^{2}=|J_{j}|^{2}(|J_{j}|+1)^{2}/4\le\omega^{4}$
for $j\in\{1,2,\ldots,\ell\}$, so the total cost is $O(\omega^{4}n)$
time and space. The second step adds $\nnz(\xi_{i}\xi_{i}^{T})^{2}\le\omega^{2}(\omega+1)^{2}$
nonzeros per constraint over $d$ total constraints, for a total cost
of $O(\omega^{6}n)$ time and apparently $O(\omega^{6}n)$ space.
However, by the definition of $\N$ (\ref{eq:Nmat}) and the hypothesis
on $\A$ via (\ref{eq:Asparsity}), the $(j,k)$-th off-diagonal block
of $\xi_{i}\xi_{i}^{T}$ is nonzero only if $(j,k)$ is an edge of
the tree $T$, as in
\[
\xi_{i}\xi_{i}^{T}[J_{j},J_{k}]\ne0\qquad\implies\qquad(j,k)\in E(T)\qquad\text{for all }j\ne k.
\]
Hence, adding $\mathbf{H}\gets\mathbf{H}+\sigma\xi_{i}\xi_{i}^{T}$
one at a time results in at most $|V(T)|+|E(T)|$ dense blocks of
at most $\frac{1}{2}\omega(\omega+1)\times\frac{1}{2}\omega(\omega+1)$,
for a total memory cost of $O(\omega^{4}n)$. 

\textbf{(ii)} We form $\q=[\A^{T},\N^{T}]w_{1}$ using a sparse matrix-vector
product. Given that $\nnz(w_{1})\le d$ and $\nnz([\A;\N])\le2\omega^{4}\ell$,
this step costs $O(\omega^{4}n)$ time and space.

\textbf{(iii)} We partition $\H$ into $[\H_{i,j}]_{i,j=1}^{\ell}$
to reveal a block sparsity pattern that coincides with the adjacency
matrix of $T$:
\begin{align*}
\H_{i,j} & =\begin{cases}
\D_{s,i}+\sigma\sum_{k=1}^{\ell}\N_{k,i}^{T}\N_{k,i}+\sigma\sum_{q=1}^{m}a_{q,i}a_{q,i}^{T} & i=j\\
\sigma\sum_{k=1}^{\ell}\N_{k,i}^{T}\N_{k,j}+\sigma\sum_{q=1}^{m}a_{q,i}a_{q,j}^{T} & (i,j)\in E(T)\\
0 & \text{otherwise}
\end{cases}
\end{align*}
where $a_{q,i}=\vector(A_{q,i})$. According to Lemma~\ref{lem:blocktree},
the permuted matrix $\Pi\H\Pi^{T}$ factors into $\L\L^{T}$ with
no block fill in $O(\omega^{6}n)$ time and $O(\omega^{4}n)$ space,
because each block $\H_{i,j}$ for $i,j\in\{1,2,\ldots,\ell\}$ is
at most order $\frac{1}{2}\omega(\omega+1)$.

\textbf{(iv)} Using the rank-1 update formula (\ref{eq:update}),
the cost of solving $(\H+\q\q^{T})u=r$ is the same as the cost of
solving $\H u=r$ for two right-hand sides, plus algebraic manipulations
in $O(d)=O(\omega^{2}n)$ time. Applying Lemma~\ref{lem:blocktree}
shows that the cost of solving $\H u=r$ for each right-hand side
is $O(\omega^{4}n)$ time and space. \qed
\end{proof}

Incorporating the block topological permutation of Lemma~\ref{lem:normal}
within any off-the-self interior-point method yields a fast interior-point
method with near-linear time complexity.
\begin{theorem}[Near-linear time]
\label{thm:lintime}Let $\T=(\{J_{1},\ldots,J_{\ell}\},T)$ be a
tree decomposition for the sparsity graph of $C,A_{1},\ldots,A_{m}\in\S^{n}$.
In the corresponding instance of (\ref{eq:CTC}), let each constraint
be written
\begin{equation}
\sum_{j=1}^{\ell}A_{i,j}\bullet X_{j}=A_{i,j}\bullet X_{j}+A_{i,k}\bullet X_{k}=b_{i}\qquad(j,k)\in E(T).\label{eq:structA}
\end{equation}
Under Assumptions~\ref{ass:lin} \&~\ref{ass:slater}, there exists
an algorithm that computes an iterate $(x,y,s)\in\K\times\R^{p}\times\K_{*}$
satisfying
\begin{align}
\left\Vert \begin{bmatrix}\A\\
\N
\end{bmatrix}x-\begin{bmatrix}b\\
0
\end{bmatrix}\right\Vert  & \le\epsilon, & \left\Vert \begin{bmatrix}\A\\
\N
\end{bmatrix}^{T}y-s+c\right\Vert  & \le\epsilon, & \frac{x^{T}s}{\sum_{j=1}^{\ell}|J_{j}|} & \le\epsilon\label{eq:eps_accurate}
\end{align}
in $O(\omega^{6.5}n^{1.5}\log(1/\epsilon))$ time and $O(\omega^{4}n)$
space, where $\omega=\max_{j}|J_{j}|=1+\wid(\T)$.
\end{theorem}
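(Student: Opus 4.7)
The plan is to apply a standard primal-dual path-following interior-point method to the dualized reformulation (\ref{eq:dualCTC-2}), and bound the overall cost as (iteration count) $\times$ (per-iteration cost), with the per-iteration cost coming from Lemma~\ref{lem:normal} and the iteration count coming from standard self-concordant barrier theory.

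First, I would verify that the hypothesis (\ref{eq:structA}) implies the sparsity condition (\ref{eq:Asparsity}) required by Lemma~\ref{lem:normal}. For each constraint $i$, the hypothesis says that the only nonzero blocks $A_{i,j}$ occur for $j \in \{j_i, k_i\}$ where $(j_i, k_i) \in E(T)$. Since $T$ is a rooted tree, exactly one of $j_i, k_i$ is the parent of the other; setting $v_i$ to be the child of the pair yields $\{v_i, p(v_i)\} = \{j_i, k_i\}$, so (\ref{eq:Asparsity}) holds. By Lemmas~\ref{lem:lin_indep}, \ref{lem:slaterp}, and~\ref{lem:slaterd}, Assumptions~\ref{ass:lin} and~\ref{ass:slater} transfer verbatim to the conic program (\ref{eq:vecCTC}) and hence also to (\ref{eq:dualCTC-2}), so a standard interior-point method is well-defined.

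Next, I would bound the per-iteration cost. Each iteration reduces to forming the rank-1 perturbed normal matrix $\H+\q\q^T$ in (\ref{eq:Hxr_dual})--(\ref{eq:Hqdef}) and solving a linear system with it. Applying Lemma~\ref{lem:normal} with the block topological permutation $\Pi$ derived from a depth-first ordering on $T$ gives us: forming $\H$ and $\q$ costs $O(\omega^6 n)$ time and $O(\omega^4 n)$ space; factoring $\Pi \H \Pi^T = \L\L^T$ with no block fill costs $O(\omega^6 n)$ time and $O(\omega^4 n)$ space; and applying the rank-1 Sherman--Morrison update costs $O(\omega^4 n)$ per right-hand side. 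The total per-iteration cost is therefore $O(\omega^6 n)$ time and $O(\omega^4 n)$ memory.

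Finally, I would bound the iteration count using classical self-concordant barrier complexity. The cone in (\ref{eq:dualCTC-2}) is the Cartesian product of $\prod_{j=1}^{\ell} \S_+^{|J_j|}$ and a second-order cone of dimension $f+1$, which is symmetric (self-scaled) and admits a self-concordant barrier with parameter $\nu = \sum_{j=1}^{\ell} |J_j| + 1 \le \omega n + 1 = O(\omega n)$. A standard primal-dual short-step or Nesterov--Todd method then produces an $\epsilon$-accurate iterate in the sense of (\ref{eq:eps_accurate}) after $O(\sqrt{\nu}\log(1/\epsilon)) = O(\sqrt{\omega n}\log(1/\epsilon))$ iterations, where Assumption~\ref{ass:slater} (via Lemmas~\ref{lem:slaterp} and~\ref{lem:slaterd}) guarantees a bounded central path and strict feasibility of the starting point. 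Multiplying,
\[
O(\sqrt{\omega n}\log(1/\epsilon))\cdot O(\omega^6 n) \;=\; O(\omega^{6.5} n^{1.5} \log(1/\epsilon))
\]
time, with space dominated by the single Cholesky factor at $O(\omega^4 n)$.

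The main obstacle I anticipate is essentially bookkeeping rather than mathematical: one must be careful that the embedding of the free variable $x_1$ into the second-order cone in going from (\ref{eq:dualCTC}) to (\ref{eq:dualCTC-2}) does not inflate the barrier parameter beyond $O(\omega n)$ (it contributes only $+1$), that the rank-1 update preserves the linear-time per-iteration bound (Lemma~\ref{lem:normal} part (iii) handles this via the Sherman--Morrison identity), and that the error measures in (\ref{eq:eps_accurate}) are indeed the standard ones produced by a symmetric primal-dual method after $O(\sqrt{\nu}\log(1/\epsilon))$ iterations; if strict feasibility of the initial point is not available, one invokes the homogeneous self-dual embedding noted after Assumption~\ref{ass:slater}, which does not change the asymptotic iteration count.
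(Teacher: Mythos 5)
Your proposal is correct and follows essentially the same route as the paper's own proof in Appendix~\ref{subsec:complexity}: verify that (\ref{eq:structA}) yields the sparsity hypothesis of Lemma~\ref{lem:normal}, bound the per-iteration cost by $O(\omega^{6}n)$ time and $O(\omega^{4}n)$ space via that lemma, and multiply by the $O(\sqrt{\nu}\log(1/\epsilon))$ iteration count of a short-step Nesterov--Todd method with barrier parameter $\nu=1+\sum_{j}|J_{j}|=O(\omega n)$. The only substantive item you defer---handling the lack of a strictly feasible starting point via the homogeneous self-dual embedding and showing that an $\epsilon$-accurate embedded iterate rescales to an $\epsilon$-accurate iterate of (\ref{eq:vecCTC})---is exactly what the paper's Lemma~\ref{lem:eps} supplies, and you correctly flag it as the remaining bookkeeping.
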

For completeness, we give a proof of Theorem~\ref{thm:lintime} in
Appendix~\ref{subsec:complexity}, based on the primal-dual interior-point
method found in SeDuMi~\cite{sturm1999using,sturm2002implementation}.
Our proof amounts to replacing the fill-reducing permutation\textemdash usually
a minimum degree ordering\textemdash by the block topological permutation
of of Lemma~\ref{lem:normal}. In practice, the minimum degree ordering
is often approximately block topological, and as such, Theorem~\ref{thm:lintime}
is often attained by off-the-shelf implementations without modification.

\begin{algorithm}
\caption{\label{alg:dctc}Dualized clique tree conversion}

\textbf{Input.} Data vector $b\in\R^{m}$, data matrices $C,A_{1},\ldots,A_{m}$,
and tree decomposition $\T=(\{J_{1},\ldots,J_{\ell}\},T)$ for the
sparsity graph of the data matrices.

\textbf{Output.} An $\epsilon$-accurate solution of (\ref{eq:SDP})
in factored form $X^{\star}=UU^{T}$, where $U\in\R^{n\times\omega}$
and $\omega=\max_{j}|J_{j}|$.

\textbf{Algorithm.}
\begin{enumerate}
\item (Conversion) Reformulate (\ref{eq:SDP}) into (\ref{eq:CTC}).
\item (Dualization) Vectorize (\ref{eq:CTC}) into (\ref{eq:vecCTC}) and
dualize into (\ref{eq:dualCTC-2}).
\item (Solution) Solve (\ref{eq:dualCTC-2}) as an order-$\nu$ conic linear
program in standard form, using an interior-point method with $O(\sqrt{\nu}\log(1/\epsilon))$
iteration complexity. At each iteration of the interior-point method,
solve the normal equation using sparse Cholesky factorization and
the fill-reducing permutation $\Pi$ in Lemma~\ref{lem:normal}.
Obtain $\epsilon$-accurate solutions $X_{1}^{\star},\ldots,X_{\ell}^{\star}$.
\item (Recovery) Recover $X^{\star}=UU^{T}$ satisfying $X^{\star}[J_{i},J_{i}]=X_{i}^{\star}$
using the low-rank matrix completion algorithm~\cite[Algorithm~2]{sun2015decomposition}.
\end{enumerate}
\end{algorithm}
The complete end-to-end procedure for solving (\ref{eq:SDP}) using
dualized clique tree conversion is summarized as Algorithm~\ref{alg:dctc}.
Before we can use Algorithm~\ref{alg:dctc} to prove our main results,
however, we must first address the cost of the pre-processing involved
in Step~1. Indeed, naively converting (\ref{eq:SDP}) into (\ref{eq:CTC})
by comparing each nonzero element of $A_{i}$ against each index set
$J_{j}$ would result in $\ell m=O(n^{2})$ comparisons, and this
would cause Step~1 to become the overall bottleneck of the algorithm. 

In the next section, we show that if (\ref{eq:SDP}) is partially
separated, then the cost of Step~1 is no more than $O(\omega^{3}n)$
time and memory. This is the final piece in the proof of Theorem~\ref{thm:comp2}.

\section{\label{sec:splitting}Optimal constraint splitting}

A key step in clique tree conversion is the splitting of a given $M\in\S_{F}^{n}$
into $M_{1},\ldots,M_{\ell}$ that satisfy
\begin{equation}
M_{1}\bullet X[J_{1},J_{1}]+M_{2}\bullet X[J_{2},J_{2}]+\cdots+M_{\ell}\bullet X[J_{\ell},J_{\ell}]=M\bullet X\qquad\text{for all }X\in\S^{n}.\label{eq:split_goal}
\end{equation}
The choice is not unique, but has a significant impact on the complexity
of an interior-point solution. The problem of choosing the \emph{sparsest}
choice with the fewest nonzero $M_{j}$ matrices can be written 
\begin{equation}
\Set^{\star}=\underset{\Set\subseteq\{1,\ldots,\ell\}}{\text{ minimize }}\quad|\Set|\quad\text{ subject to }\quad\bigcup_{j\in\Set}(J_{j}\times J_{j})\supseteq\mathcal{M},\label{eq:set_cover}
\end{equation}
where $\mathcal{M}=\{(i,j):M[i,j]\ne0\}$ are the nonzero matrix elements
to be covered. Problem (\ref{eq:set_cover}) is an instance of SET
COVER, one of Karp's 21 NP-complete problems, but becomes solvable
in polynomial time given a tree decomposition (with small width) for
the covering sets~\cite{guo2006exact}. 

In this section, we describe an algorithm that computes the sparsest
splitting for each $M$ in $O(\nnz(M))$ time and space, after a precomputation
set taking $O(\omega n)$ time and memory. Using this algorithm, we
convert a partially separable instance of (\ref{eq:SDP}) into (\ref{eq:CTC})
in $O(\omega^{3}n)$ time and memory. Then, give a complete proof
to Theorem~\ref{thm:comp2} by using this algorithm to convert (\ref{eq:SDP})
into (\ref{eq:CTC}) in Step~1 of Algorithm~\ref{alg:dctc}.

Our algorithm is adapted from the leaf-pruning algorithm of Guo and
Niedermeier~\cite{guo2006exact}, but appears to be new within the
context of clique tree conversion. Observe that the covering sets
inherit the \emph{edge cover} and \emph{running intersection }properties
of $\T$:
\begin{align}
\bigcup_{j=1}^{\ell}(J_{j}\times J_{j}) & \supseteq\mathcal{M}\quad\text{for all possible choices of }\mathcal{M},\label{eq:tree_like1}\\
(J_{i}\times J_{i})\cap(J_{j}\times J_{j}) & \subseteq(J_{k}\times J_{k})\quad\text{for all }k\text{ on the path from }i\text{ to }j.\label{eq:tree_like2}
\end{align}
For every leaf node $j$ with parent node $p(j)$ on $T$, property
(\ref{eq:tree_like2}) implies that the subset $(J_{j}\times J_{j})\backslash(J_{p(j)}\times J_{p(j)})$
contains elements \emph{unique} to $J_{j}\times J_{j}$, because $p(j)$
lies on the path from $j$ to all other nodes in $T$. If $\mathcal{M}$
contains an element from this subset, then $j$ must be included in
the cover set $\mathcal{S}$, so we set $\mathcal{S}\gets\mathcal{S}\cup\{j\}$
and $\mathcal{M}\gets\mathcal{M}\backslash(J_{j}\times J_{j})$; otherwise,
we do nothing. Pruning the leaf node reveals new leaf nodes, and we
repeat this process until the tree $T$ is exhausted of nodes. Then,
property (\ref{eq:tree_like1}) guarantees that $\mathcal{M}$ will
eventually be covered.

\begin{algorithm}[t]
\caption{\label{alg:setcover-tree}Optimal algorithm for splitting constraints}

\textbf{Input.} Data matrices $M_{1},\ldots,M_{m}$. Tree decomposition
$\T=(\{J_{1},\ldots,J_{\ell}\},T)$ for the sparsity graph of the
data matrices.

\textbf{Output.} Split matrices $M_{i,j}$ satisfying $M_{i}\bullet X=\sum_{j=1}^{\ell}M_{i,j}\bullet X[J_{j},J_{j}]$
for all $i\in\{1,\ldots,m\}$ in which the number of nonzero split
matrices $\{j:M_{i,j}\ne0\}$ is minimized.

\textbf{Algorithm.}
\begin{enumerate}
\item (Precomputation) Arbitrarily root $T$, and iterate over $j\in\{1,\ldots,\ell\}$
in any order. For each $j$ with parent $k$, define $U_{j}\equiv J_{j}\backslash J_{k}$.
For the root $j$, define $U_{j}=J_{j}$. For each $k\in U_{j}$,
set $u(k)=j$.
\item (Overestimation) Iterate over $i\in\{1,\ldots,m\}$ in any order.
For each $i$, compute the overestimate $\mathcal{S}_{i}=\bigcup_{(i,j)\in\mathcal{M}}u(j)$
where $\mathcal{M}=\{(j,k):M_{i}[j,k]\ne0\}$ are the nonzeros to
be covered.
\item (Leaf pruning on the overestimation) Iterate over $j\in\Set_{i}$
in topological order on $T$ (children before parents). If $M_{i}[J_{j},U_{j}]\ne0$
then add $j$ to the set cover, and remove the covered elements
\[
M_{i,j}\gets M_{i}[J_{j},J_{j}],\qquad M_{i}[J_{j},J_{j}]\gets0.
\]
If $M_{i}=0$, break. Return to Step 2 for a new value of $i$.
\end{enumerate}
\end{algorithm}
Algorithm~\ref{alg:setcover-tree} is an adaptation of the leaf-pruning
algorithm described above, with three important simplifications. First,
it uses a topological traversal (Definition~\ref{def:topological})
to simulate the process of leaf pruning without explicitly deleting
nodes from the tree. Second, it notes that the unique subset $(J_{j}\times J_{j})\backslash(J_{p(j)}\times J_{p(j)})$
can be written in terms of another unique set $U_{j}$:
\[
(J_{j}\times J_{j})\backslash(J_{p(j)}\times J_{p(j)})=(U_{j}\times J_{j})\cup(J_{j}\times U_{j})\qquad\text{where }U_{j}\equiv J_{j}\backslash J_{p(j)}.
\]
Third, it notes that the unique set $U_{j}$ defined above is a partitioning
of $\{1,\ldots,n\}$, and has a well-defined inverse map. The following
is taken from~\cite{lewis1989fast,pothen1990compact}, where $U_{j}$
is denoted $\mathrm{new}(J_{j})$ and referred to as the ``new set''
of $J_{j}$; see also~\cite{andersen2013logarithmic}.
\begin{lemma}[Unique partition]
Define $U_{j}=J_{j}\backslash J_{p(j)}$ for all nodes $j$ with
parent $p(j)$, and $U_{r}=J_{r}$ for the root node $r$. Then: (i)
$\bigcup_{j=1}^{\ell}U_{j}=\{1,\ldots,n\}$; and (ii) $U_{i}\cap U_{j}=\emptyset$
for all $i\ne j$.
\end{lemma}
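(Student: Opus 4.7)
The plan is to derive both claims directly from the \emph{running intersection} property of the tree decomposition (Definition~\ref{def:treedecomp}, item~3), combined with the vertex cover property. The crucial reformulation of running intersection is that, for any vertex $v\in\{1,\ldots,n\}$, the set $T_v\equiv\{j:v\in J_j\}$ induces a connected subtree of $T$. Since $T$ is a tree, this subtree contains a unique node $j^\star(v)$ that is closest to the root $r$. The whole proof will revolve around this single node.

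For part (i), I would fix an arbitrary $v\in\{1,\ldots,n\}$. The vertex cover property of $\T$ guarantees $T_v\neq\emptyset$, so $j^\star(v)$ is well defined. If $j^\star(v)=r$, then $v\in J_r=U_r$ by construction. Otherwise, $p(j^\star(v))$ is strictly closer to the root than $j^\star(v)$, so $p(j^\star(v))\notin T_v$, which means $v\in J_{j^\star(v)}\setminus J_{p(j^\star(v))}=U_{j^\star(v)}$. Either way, $v$ lies in some $U_j$, so $\bigcup_{j=1}^\ell U_j=\{1,\ldots,n\}$.

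For part (ii), I would show that for each $v$, exactly one $U_j$ contains $v$, namely $j=j^\star(v)$. Suppose $j\in T_v$ and $j\neq j^\star(v)$. Then the path in $T$ from $j$ up to $j^\star(v)$ is nontrivial, and every node along it lies in $T_v$ by running intersection. In particular, $p(j)$ is the first node on this path, so $p(j)\in T_v$, i.e.\ $v\in J_{p(j)}$, which rules out $v\in U_j$. For $j\notin T_v$, trivially $v\notin J_j\supseteq U_j$. Hence $v\in U_j$ forces $j=j^\star(v)$, and disjointness follows.

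There is no real obstacle here: the argument is essentially the standard characterization of the ``new'' sets $\mathrm{new}(J_j)$ from the sparse-matrix literature that the paper already cites. The only mild subtlety is remembering to handle the root case $j^\star(v)=r$ separately, since the definition $U_r=J_r$ (rather than $J_r\setminus J_{p(r)}$) is what makes part (i) work. Once that bookkeeping is done, both statements fall out immediately from the subtree-property reformulation of running intersection.
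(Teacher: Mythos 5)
Your proof is correct. The paper itself does not prove this lemma---it simply cites the ``new set'' $\mathrm{new}(J_j)$ construction from the sparse-matrix literature (Lewis--Peyton--Pothen and Pothen--Sun)---and your argument is exactly the standard one behind that citation: reformulate running intersection as ``$T_v=\{j:v\in J_j\}$ induces a connected subtree,'' take its unique minimum-depth node $j^\star(v)$, and show $v\in U_j$ iff $j=j^\star(v)$. The only step you leave implicit is why $p(j)$ lies on the path from $j$ to $j^\star(v)$ for any other $j\in T_v$; this holds because the lowest common ancestor of $j$ and $j^\star(v)$ lies on the connecting path, hence belongs to $T_v$ by connectivity, hence equals $j^\star(v)$ by depth-minimality, so $j^\star(v)$ is an ancestor of $j$ and the path ascends through $p(j)$. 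With that one sentence added, the proof is complete and self-contained.
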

In the case that $\mathcal{M}$ contains just $O(1)$ items to be
covered, we may use the inverse map associated with $U_{j}$ to directly
identify covering sets whose unique sets contain elements from $\mathcal{M}$,
without exhaustively iterating through all $O(n)$ covering sets.
This final simplification reduces the cost of processing each $M_{i}$
from linear $O(n)$ time to $O(\nnz(M_{i}))$ time, after setting
up the inverse map in $O(\omega n)$ time and space.
\begin{theorem}
\label{thm:preprocess}Algorithm~\ref{alg:setcover-tree} has complexity
\[
O(\omega n+\nnz(M_{1})+\nnz(M_{2})+\cdots+\nnz(M_{m}))\text{ time and memory,}
\]
where $\omega\equiv1+\wid(\T)$.
\end{theorem}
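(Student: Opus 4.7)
The plan is to establish two properties of Algorithm~\ref{alg:setcover-tree}: correctness (it returns a splitting in which the number of nonzero $M_{i,j}$ matrices is minimized) and the claimed time and memory bound. Since the theorem statement concerns only complexity, I will focus there, treating correctness as a specialization of the tree-structured leaf-pruning procedure of Guo and Niedermeier cited in the excerpt: the edge cover property guarantees that some valid cover exists, and the running intersection property (rephrased as $U_j$ being the unique ``top'' node at which each vertex appears) makes it strictly suboptimal to skip a leaf $j$ whenever $M_i[J_j, U_j] \ne 0$.

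For complexity, I would analyze the three steps of the algorithm separately. Step~1 iterates over all $(j, k)$ with $k \in J_j$ to compute the partition $\{U_j\}$ and the inverse pointer $u:\{1,\ldots,n\} \to \{1,\ldots,\ell\}$, at cost $\sum_{j=1}^\ell |J_j| \le \omega \ell \le \omega n$ in both time and memory. For each constraint $i$, Step~2 enumerates the $\nnz(M_i)$ nonzero positions $(j,k)$, applies the constant-time lookup $u(j)$, and accumulates the results in $\mathcal{S}_i$ using a bucket or hash-set so that duplicates collapse in amortized $O(1)$ time; hence Step~2 costs $O(\nnz(M_i))$. To make Step~3 match $O(\nnz(M_i))$ rather than the naive $O(\omega^2 |\mathcal{S}_i|)$, I would store $M_i$ in a sparse data structure bucketed by $u(\cdot)$ on each coordinate, so that for any node $j$ the current nonzeros $(r,c)$ with $r \in U_j$ can be enumerated in time proportional to their count; the topological (children-before-parents) traversal then ensures that when $j$ is visited, no element of $M_i[J_j, J_j]$ has already been consumed by an ancestor. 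Both the test $M_i[J_j, U_j] \ne 0$ and, in the positive case, the transfer $M_{i,j} \gets M_i[J_j, J_j]$ with subsequent zeroing can then be executed in time proportional to the nonzeros touched, and every original nonzero is touched exactly once.

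Summing over $i \in \{1,\ldots,m\}$ and adding the precomputation yields the stated $O(\omega n + \sum_i \nnz(M_i))$ figure. The main obstacle will be the amortized analysis of Step~3: I need to argue that when a node $j \in \mathcal{S}_i$ is visited but no addition is made to the cover, the $O(1)$ test is justified despite $J_j \times U_j$ containing as many as $\omega \cdot |U_j|$ positions. The charging works because $j \in \mathcal{S}_i$ only if some nonzero of $M_i$ originally contributed $u(r) = j$ for $r \in U_j$; that nonzero either still resides in the bucket attached to $U_j$ (in which case the test succeeds in $O(1)$ by inspecting the bucket head) or has already been removed by a descendant (in which case its removal paid the $O(1)$ inspection cost when it occurred). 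Either way, each original nonzero of $M_i$ is charged only a constant number of times across the whole traversal, so the total work is linear in $\nnz(M_i)$, as required.
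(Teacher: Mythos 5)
Your overall route---precompute the partition $U_j=J_j\backslash J_{p(j)}$ and its inverse map in $O(\omega n)$ time, use the inverse map to overestimate the cover in $O(\nnz(M_i))$ per constraint, then run leaf pruning only over the overestimate---is exactly the justification the paper gives; the paper states this theorem without a formal proof, so your write-up is more detailed than the original. Your amortized analysis of the \emph{test} $M_i[J_j,U_j]\ne0$ is sound: by the running intersection property, any surviving nonzero $(r,c)$ with $r\in U_j$ must also have $c\in J_j$ (otherwise it would already have been consumed at the strictly deeper node $u(c)$, which is visited first), so an $O(1)$ bucket-head inspection correctly decides the test.

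The gap is in the \emph{removal} step. When $j$ is added to the cover, the algorithm zeros all of $M_i[J_j,J_j]$, and this principal submatrix generally contains surviving nonzeros $(r,c)$ with both $r,c\in J_j\cap J_{p(j)}$, i.e.\ with \emph{neither} coordinate in $U_j$; such entries live in the buckets of ancestors of $j$, not in bucket $j$, so the structure you describe cannot enumerate them in time proportional to their count. If you instead clear only bucket $j$, optimality fails: take $J_1=\{1,2,3\}$ (leaf), $J_2=\{2,3\}$ (root), and $M_i$ with nonzeros at positions $\{1,2\}$ and $\{2,3\}$. The entry $\{2,3\}$ sits in the root's bucket, survives the visit to node $1$ under bucket-only clearing, and forces node $2$ into the cover, whereas the algorithm as written (which zeros $M_i[J_1,J_1]$ entirely) returns the optimal singleton $\{1\}$. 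Your claim that ``every original nonzero is touched exactly once'' is true of the removals themselves but not of the work needed to \emph{locate} the surviving entries of $M_i[J_j,J_j]$ scattered over ancestors' buckets (nor of a row-list implementation, whose scan cost $\sum_{r\in J_j}\nnz(\text{row }r)$ repeatedly touches entries that are not removed). To close the argument you need either a data structure supporting output-sensitive ``enumerate and delete all survivors inside $J_j\times J_j$,'' or a lazy coveredness check at ancestor nodes whose cost you can charge to deletions; as written, the charging does not go through. This is precisely the point that the paper's own informal discussion also leaves unaddressed, so flagging and resolving it would strengthen the result rather than merely reproduce it.
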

For partially separable instances of (\ref{eq:SDP}), the sparsest
instance of (\ref{eq:CTC}) contains exactly one nonzero split matrix
$A_{i,j}\ne0$ for each $i$, and Algorithm~\ref{alg:setcover-tree}
is guaranteed to find it. Using Algorithm~\ref{alg:setcover-tree}
to convert (\ref{eq:SDP}) into (\ref{eq:CTC}) in Step~1 of Algorithm~\ref{alg:dctc}
yields the complexity figures quoted in Theorem~\ref{thm:comp2}. 

\begin{proof}[Theorem~\ref{thm:comp2}]By hypothesis, $\T=\{J_{1},\ldots,J_{\ell}\}$
is a tree decomposition for the sparsity graph of the data matrices
$C,A_{1},\ldots,A_{m}$, and (\ref{eq:SDP}) is partially separable
on $\T$. We proceed to solve (\ref{eq:SDP}) using Algorithm~\ref{alg:dctc},
while performing the splitting into $C_{j}$ and $A_{i,j}$ using
Algorithm~\ref{alg:setcover-tree}. Below, we show that each step
of the algorithm costs no more than $O(\omega^{6.5}n^{1.5}\log(1/\epsilon))$
time and $O(\omega^{4}n)$ memory:

\textbf{Step 1 }(Matrix $\A$ and vector $\c$)\textbf{.} We have
$\dim(\S_{G}^{n})=|V(G)|+|E(G)|\le n+n\cdot\wid(\T)\le\omega n$,
and hence $\nnz(C)\le\omega n$. Under partial separability (Definition~\ref{def:decoupled}),
we also have $\nnz(A_{i})\le\omega^{2}$. Assuming linear independence
(Assumption~\ref{ass:lin}) yields $m\le\dim(\S_{G}^{n})\le\omega n$,
and this implies that $\nnz(C)+\sum_{i}\nnz(A_{i})=O(\omega^{3}n)$,
so the cost of forming $\A$ and $\c$ using Algorithm~\ref{alg:dctc}
is $O(\omega^{3}n)$ time and memory via Theorem~\ref{thm:preprocess}.

\textbf{Step 1} (Matrix $\N$)\textbf{.} For $\N=[\N_{i,j}]_{i,j=1}^{\ell}$,
we note that each block $\N_{i,j}$ is diagonal, and hence $\nnz(\N_{i,j})\le\omega^{2}$.
The overall $\N$ contains $\ell$ block-rows, with $2$ nonzero blocks
per block-row, for a total of $2\ell$ nonzero blocks. Therefore,
the cost of forming $\N$ is $\nnz(\N)=O(\omega^{2}n)$ time and memory. 

\textbf{Step 2.} We dualize by forming the matrix $\M=[0,-\A^{T},\N^{T},+I]$
and vectors $\c^{T}=[0,b^{T},0,0]$ and vectors $\b=-c$ in $O(\nnz(\A)+\nnz(\N))=O(\omega^{3}n)$
time and memory.

\textbf{Step 3.} The resulting instance of (\ref{eq:CTC}) satisfies
the assumptions of Theorem~\ref{thm:lintime} and therefore costs
$O(\omega^{6.5}n^{1.5}\log(1/\epsilon))$ time and $O(\omega^{4}n)$
memory to solve.

\textbf{Step 4.} The low-rank matrix completion algorithm~\cite[Algorithm~2]{sun2015decomposition}
makes $\ell\le n$ iterations, where each iteration performs $O(1)$
matrix-matrix operations over $\omega\times\omega$ dense matrices.
Its cost is therefore $O(\omega^{3}n)$ time and $O(\omega^{2}n)$
memory.\qed\end{proof}

\section{\label{sec:Auxillary-variables}Dualized Clique Tree Conversion with
auxiliary Variables}

Theorem~\ref{thm:lintime} bounds the cost of solving instances of
(\ref{eq:CTC}) that satisfy the sparsity assumption (\ref{eq:structA})
as near-linear time and linear memory. Instances of (\ref{eq:CTC})
that do not satisfy the sparsity assumption can be systematically
transformed into ones that do by introducing \emph{auxiliary variables}.
Let us illustrate this idea with an example.
\begin{example}[Path graph]
\label{exa:path}Given $(n+1)\times(n+1)$ symmetric tridiagonal
matrices $A\succ0$ and $C$ with $A[i,j]=C[i,j]=0$ for all $|i-j|>1$,
consider the Rayleigh quotient problem
\begin{equation}
\text{minimize }C\bullet X\qquad\text{ subject to }\qquad A\bullet X=1,\quad X\succeq0.\label{eq:exa_sdp}
\end{equation}
The associated sparsity graph is the \emph{path graph} on $n+1$ nodes,
and its elimination tree decomposition $\T=(\{J_{1},\ldots,J_{n}\},T)$
has index sets $J_{j}=\{j,j+1\}$ and parent pointer $p(j)=j+1$.
Applying clique tree conversion and vectorizing yields an instance
of (\ref{eq:vecCTC}) with
\begin{align*}
\A & =\begin{bmatrix}a_{1}^{T} & \cdots & a_{n}^{T}\end{bmatrix}, & \N & =\begin{bmatrix}e_{3}^{T} & -e_{1}^{T}\\
 & \ddots & \ddots\\
 &  & e_{3}^{T} & -e_{1}^{T}
\end{bmatrix}
\end{align*}
where $e_{j}$ is the $j$-th column of the $3\times3$ identity matrix,
and $a_{1},\ldots,a_{n}\in\R^{3}$ are appropriately chosen vectors.
The dualized Schur complement $\H=\D_{s}+\sigma\A^{T}\A+\sigma\N^{T}\N$
is fully dense, so dualized clique tree conversion (Algorithm~\ref{alg:dctc})
would have a complexity of at least cubic $n^{3}$ time and quadratic
$n^{2}$ memory. Instead, introducing $n-1$ auxiliary variables $u_{1},\ldots,u_{n-1}$
yields the following problem
\begin{alignat}{3}
 & \text{minimize }\quad & \sum_{j=1}^{n}c_{j}^{T}x_{j}\label{eq:auxpath}\\
 & \text{subject to } & a_{1}^{T}x_{1}-\begin{bmatrix}0 & 1\end{bmatrix}\begin{bmatrix}x_{2}\\
u_{2}
\end{bmatrix} & =b\nonumber \\
 &  & \begin{bmatrix}a_{i}^{T} & 1\end{bmatrix}\begin{bmatrix}x_{i}\\
u_{i}
\end{bmatrix}-\begin{bmatrix}0 & 1\end{bmatrix}\begin{bmatrix}x_{i+1}\\
u_{i+1}
\end{bmatrix} & =0 & \qquad\text{for all }i & \in\{2,\ldots n-1\}\nonumber \\
 &  & x_{1}\in\vector(\S_{+}^{2}),\;\begin{bmatrix}x_{j}\\
u_{j}
\end{bmatrix}\in\vector(\S_{+}^{2}) & \times\R & \text{for all }j & \in\{2,\ldots n\}\nonumber 
\end{alignat}
which does indeed satisfy the sparsity assumption (\ref{eq:structA})
of Theorem~\ref{thm:lintime}. In turn, solving (\ref{eq:auxpath})
using Steps~2-3 of Algorithm~\ref{alg:dctc} recovers an $\epsilon$-accurate
solution in $O(n^{1.5}\log\epsilon^{-1})$ time and $O(n)$ memory.\qed
\end{example}
For a constraint $A_{i}\bullet X=b_{i}$ in (\ref{eq:SDP}), we assume
without loss of generality\footnote{Since $T$ is connected, we can always find a connected subset $W'$
satisfying $W\subseteq W'\subseteq V(T)$ and replace $W$ by $W'$.} that the corresponding constraint in (\ref{eq:CTC}) is split over
a \emph{connected subtree} of $T$ induced by a subset of vertices
$W\subseteq V(T)$, as in
\begin{equation}
\sum_{j\in W}A_{i,j}\bullet X[J_{j},J_{j}]=b_{i},\qquad T_{W}\equiv(W,\;E(T))\text{ is connected}.\label{eq:Acond}
\end{equation}
Then, the coupled constraint (\ref{eq:Acond}) can be decoupled into
$|W|$ constraints, by introducing $|W|-1$ auxiliary variables, one
for each edge of the connected subtree $T_{W}$:
\begin{align}
A_{i,j}\bullet X[J_{j},J_{j}]+\sum_{k\in\ch(j)}u_{k} & =\begin{cases}
b_{i} & k\text{ is root of }T_{W},\\
u_{j} & \text{otherwise,}
\end{cases}\qquad\text{for all }j\in W.\label{eq:TC_constr}
\end{align}
It is easy to see that (\ref{eq:Acond}) and (\ref{eq:TC_constr})
are equivalent by applying Gaussian elimination on the auxiliary variables.
\begin{lemma}
The matrix $X$ satisfies (\ref{eq:Acond}) if and only if there exists
$\{u_{j}\}$ such that $X$ satisfies (\ref{eq:TC_constr}).
\end{lemma}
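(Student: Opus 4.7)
The plan is to establish both directions by exploiting the tree structure of $T_W$, using Gaussian elimination as suggested in the remark. Root the connected subtree $T_W$ at some $r\in W$, and for each $j\in W$ let $D_j\subseteq W$ denote the set of descendants of $j$ in $T_W$ (including $j$ itself, with $D_r=W$).

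For the forward direction ($\Rightarrow$), given $X$ satisfying (\ref{eq:Acond}), I would explicitly \emph{define} the auxiliary variables by aggregating contributions from each subtree: set
\[
u_j \;=\; \sum_{k\in D_j} A_{i,k}\bullet X[J_k,J_k] \qquad \text{for every non-root } j\in W.
\]
Then I would verify (\ref{eq:TC_constr}) pointwise. For a non-root $j$, the children of $j$ partition $D_j\setminus\{j\}$, so $\sum_{k\in\ch(j)} u_k = \sum_{k\in D_j\setminus\{j\}} A_{i,k}\bullet X[J_k,J_k]$, and adding $A_{i,j}\bullet X[J_j,J_j]$ yields exactly $u_j$. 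For the root $r$, the same identity together with $D_r=W$ and (\ref{eq:Acond}) gives the required value $b_i$.

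For the backward direction ($\Leftarrow$), I would simply sum (\ref{eq:TC_constr}) over all $j\in W$. On the left-hand side, $\sum_{j\in W}\sum_{k\in\ch(j)} u_k$ counts each non-root $k\in W$ exactly once, since in a rooted tree every non-root has a unique parent; this equals $\sum_{j\in W,\, j\ne r} u_j$, which is exactly the contribution from the right-hand side for non-root nodes. Hence all $u_j$ cancel and we are left with $\sum_{j\in W} A_{i,j}\bullet X[J_j,J_j] = b_i$, which is (\ref{eq:Acond}).

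There is essentially no hard step here; the only thing to be careful about is confirming that the partition of $W\setminus\{r\}$ by parent-child relationships works cleanly, which uses only the fact that $T_W$ is a connected tree (so rooting it yields a well-defined parent for each non-root). Everything else is bookkeeping.
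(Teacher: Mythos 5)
Your proof is correct and follows essentially the same route the paper intends: the paper offers no explicit proof, only the remark that the equivalence follows ``by applying Gaussian elimination on the auxillary variables,'' and your backward direction (summing the constraints over $j\in W$ so that each non-root $u_k$ appears once on each side and cancels) is precisely that elimination, while your forward direction supplies the natural explicit choice $u_j=\sum_{k\in D_j}A_{i,k}\bullet X[J_k,J_k]$. The only point worth making explicit is that $\ch(j)$ and the root in (\ref{eq:TC_constr}) must be read relative to the induced subtree $T_W$ (so that every $u_k$ appearing is indexed by a node of $W$), which your rooting of $T_W$ handles correctly.
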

Repeating the splitting procedure for every constraint in (\ref{eq:CTC})
yields a problem of the form
\begin{alignat}{2}
 & \text{minimize } & c^{T}x,\label{eq:auxfull}\\
 & \text{subject to } & \sum_{j\in W_{i}}(\A_{i,j}x_{j}+\B_{i,j}u_{i,j}) & =\f_{i}\qquad\text{for all }i\in\{1,\ldots,m\}\nonumber \\
 &  & \sum_{j=1}^{\ell}\N_{i,j}x_{j} & =0\qquad\text{for all }i\in\{1,\ldots,\ell\}\nonumber \\
 &  & \begin{bmatrix}x_{j}\\{}
[u_{i,j}]_{i=1}^{m}
\end{bmatrix}\in\vector(\S_{+}^{|J_{j}|})\times & \R^{\gamma_{j}}\qquad\text{for all }j\in\{1,\ldots,\ell\}\nonumber 
\end{alignat}
where $W_{i}$ is induces the connected subtree associated with $i$-th
constraint, and $\gamma_{j}$ is the total number of auxiliary variables
added to each $j$-th variable block. When (\ref{eq:dualCTC-2}) is
dualized and solved using an interior-point method, the matrix $\H=[\H_{i,j}]_{i,j=1}^{\ell}$
satisfies $\H_{i,j}=0$ for every $(i,j)\notin E(T)$, so by repeating
the proof of Lemma~\ref{lem:normal}, the cost of solving the normal
equation is again linear time. Incorporating this within any off-the-self
interior-point method again yields a fast interior-point method.
\begin{lemma}
\label{lem:aux}Let $\T=(\{J_{1},\ldots,J_{\ell}\},T)$ be a tree
decomposition for the sparsity graph of $C,A_{1},\ldots,A_{m}\in\S^{n}$,
and convert the  corresponding instance of (\ref{eq:CTC}) into (\ref{eq:auxfull}).
Under Assumptions~\ref{ass:lin} \&~\ref{ass:slater}, there exists
an algorithm that computes an iterate $(x,y,s)\in\K\times\R^{p}\times\K_{*}$
satisfying (\ref{eq:eps_accurate}) in
\[
O((\omega^{2}+\gamma_{\max})^{3}\omega^{0.5}n^{1.5}\log\epsilon^{-1})\text{ time and }O((\omega^{2}+\gamma_{\max})^{2}n)\text{ memory,}
\]
where $\omega=1+\wid(\T)$ and $\gamma_{\max}=\max_{j}\gamma_{j}$
is the maximum number of auxiliary variables added to a single variable
block.
\end{lemma}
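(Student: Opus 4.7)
The plan is to reduce Lemma~\ref{lem:aux} to Theorem~\ref{thm:lintime} by viewing the auxillary-variable reformulation (\ref{eq:vecCTC-1}) as an instance of clique tree conversion with enlarged variable blocks. Specifically, at each node $j$ of the tree $T$ I would merge the semidefinite variable $x_j \in \vector(\S_+^{|J_j|})$ with the auxillary variables $[u_{i,j}]_{i=1}^{m} \in \R^{\gamma_j}$ into a single variable block of size $\tfrac{1}{2}\omega(\omega+1) + \gamma_j = O(\omega^2 + \gamma_{\max})$, living in the product cone $\vector(\S_+^{|J_j|}) \times \R^{\gamma_j}$. The resulting conic program has exactly the same block structure as (\ref{eq:CTC}), and Step 2 of Algorithm~\ref{alg:dctc} can then be applied verbatim to dualize it, embedding the now-larger free component into a single second-order cone of barrier parameter $2$.

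First I would verify that Assumptions~\ref{ass:lin} and~\ref{ass:slater} are inherited by the reformulation, by direct analogy with Lemmas~\ref{lem:lin_indep}--\ref{lem:slaterd} in Appendix~\ref{sec:linindepslater}. Linear independence follows because each auxillary variable $u_{i,j}$ is introduced together with exactly one defining equation that pins it down in terms of $x_j$ and neighboring auxillaries (cf.\ (\ref{eq:TC_constr})), so eliminating the auxillaries by Gaussian elimination recovers the original full-rank system. Slater's condition is preserved because any strictly feasible $X \succ 0$ for (\ref{eq:SDP}) can be extended to a strictly feasible point of (\ref{eq:vecCTC-1}) by propagating the defining equations from the leaves of $T_{W_i}$ up to the root.

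The main obstacle is verifying that the decoupled constraints in (\ref{eq:vecCTC-1}) satisfy the pairwise block-adjacency hypothesis (\ref{eq:structA}) of Theorem~\ref{thm:lintime}. By design, the decoupled constraint at a tree node $j \in W_i$ involves only the enlarged blocks at $j$ and its tree-neighbors, so each row of $\A$ is nonzero only in a tree-adjacent pair of block-columns, and hence $\A^T \A$ shares its block sparsity pattern with the tree $T$. The subtlety is that a node with several children in $W_i$ would naively produce a row touching three or more blocks, which would destroy block-tree sparsity for $\A^T \A$; to avoid this one must introduce a short chain of additional auxillaries per branching node to pairwise-split the child sum, and these additional variables are absorbed into the bound $\gamma_{\max}$.

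Finally I would invoke Lemma~\ref{lem:normal} on the dualized normal matrix $\H$, with the enlarged block dimension $O(\omega^2+\gamma_{\max})$ replacing the dimension $\tfrac{1}{2}\omega(\omega+1)$ used there. Under the topological block permutation of Lemma~\ref{lem:blocktree}, the Cholesky factorization of $\H$ admits no block fill, so forming, factoring, and back-substituting $\H$ costs $O((\omega^2+\gamma_{\max})^3 n)$ time and $O((\omega^2+\gamma_{\max})^2 n)$ memory per iteration. The barrier parameter of the product cone is $\nu = O(\omega n)$, since the product of $\ell \le n$ semidefinite cones of order at most $\omega$ contributes $O(\omega n)$ while the single second-order cone hosting all free variables contributes only $2$; standard primal-dual interior-point analysis (Appendix~\ref{subsec:complexity}) then yields $O(\sqrt{\omega n}\log\epsilon^{-1})$ iterations. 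Multiplying the per-iteration cost by the iteration count gives the claimed $O((\omega^2 + \gamma_{\max})^3 \omega^{0.5} n^{1.5} \log\epsilon^{-1})$ time and $O((\omega^2+\gamma_{\max})^2 n)$ memory bounds.
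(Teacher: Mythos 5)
Your proposal is correct and follows essentially the same route as the paper: reduce to Theorem~\ref{thm:lintime} by rerunning Lemma~\ref{lem:normal} with the enlarged block order $\tfrac{1}{2}\omega(\omega+1)+\gamma_{\max}$, then multiply the $O((\omega^{2}+\gamma_{\max})^{3}n)$ per-iteration cost by the $O(\sqrt{\omega n}\log\epsilon^{-1})$ iteration bound. The one place you go beyond the paper---the worry that a node with several children in $W_{i}$ yields a constraint row touching three or more blocks---is resolved under the paper's implicit convention of storing the auxillary variable for each child edge in the \emph{parent's} block (cf.\ the count of $|\ch(j)|\le d_{\max}$ auxillaries per block in the proof of Theorem~\ref{thm:comp3}), so each decoupled equation in (\ref{eq:TC_constr}) touches only blocks $j$ and $p(j)$; your chained-auxillary fix achieves the same effect at the same order of $\gamma_{\max}$.
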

\begin{proof}
We repeat the proof of Theorem~\ref{thm:lintime}, but slightly modify
the linear time normal equation result in Lemma~\ref{lem:normal}.
Specifically, we repeat the proof of Lemma~\ref{lem:normal}, but
note that each block $\H_{i,j}$ of $\H$ is now order $\frac{1}{2}\omega(\omega+1)+\gamma_{\max}$,
so that factoring in (ii) now costs $O((\omega^{2}+\gamma_{\max})^{3}n)$
time and $O((\omega^{2}+\gamma_{\max})^{2}n)$ memory, and substituting
in (iii) costs $O((\omega^{2}+\gamma_{\max})^{2}n)$ time and memory.
After $O(\sqrt{\omega n}\log\epsilon^{-1})$ interior-point iterations,
we again arrive at an $\epsilon$-accurate and $\epsilon$-feasible
solution to (\ref{eq:CTC}). \qed
\end{proof}

\begin{algorithm}
\caption{\label{alg:dctcaux}Dualized clique tree conversion with auxiliary
variables}

\textbf{Input.} Data vector $b\in\R^{m}$, data matrices $C,A_{1},\ldots,A_{m}$,
and tree decomposition $\T=(\{J_{1},\ldots,J_{\ell}\},T)$ for the
sparsity graph of the data matrices.

\textbf{Output.} An $\epsilon$-accurate solution of (\ref{eq:SDP})
in factored form $X^{\star}=UU^{T}$, where $U\in\R^{n\times\omega}$
and $\omega=\max_{j}|J_{j}|$.

\textbf{Algorithm.}
\begin{enumerate}
\item (Conversion) Reformulate (\ref{eq:SDP}) into (\ref{eq:CTC}).
\item (auxiliary variables) Vectorize (\ref{eq:CTC}) into (\ref{eq:vecCTC}),
and separate into (\ref{eq:auxfull}) by rewriting each (\ref{eq:Acond})
as (\ref{eq:TC_constr}).
\item (Dualization) Dualize (\ref{eq:auxfull}) into (\ref{eq:dualCTC-2}).
\item (Solution) Solve (\ref{eq:dualCTC-2}) as an order-$\nu$ conic linear
program in standard form, using an interior-point method with $O(\sqrt{\nu}\log(1/\epsilon))$
iteration complexity. At each iteration of the interior-point method,
solve the normal equation using sparse Cholesky factorization and
the fill-reducing permutation $\Pi$ in Lemma~\ref{lem:normal}.
Obtain $\epsilon$-accurate solutions $X_{1}^{\star},\ldots,X_{\ell}^{\star}$.
\item (Recovery) Recover $X^{\star}=UU^{T}$ satisfying $X^{\star}[J_{i},J_{i}]=X_{i}^{\star}$
using the low-rank matrix completion algorithm~\cite[Algorithm~2]{sun2015decomposition}.
\end{enumerate}
\end{algorithm}
The complete end-to-end procedure for solving (\ref{eq:SDP}) using
the auxiliary variables method is summarized as Algorithm~\ref{alg:dctcaux}.
In the case of network flow semidefinite programs, the separating
in Step 2 can be performed in closed-form using the\emph{ induced
subtree} property of the tree decomposition~\cite{blair1993introduction}.
\begin{definition}[Induced subtrees]
Let $\T=(\{J_{1},\ldots,J_{\ell}\},T)$ be a tree decomposition.
We define $T_{k}$ as the connected subtree of $T$ induced by the
nodes that contain the element $k$, as in
\begin{align*}
V(T_{k}) & =\{j\in\{1,\ldots,\ell\}:k\in J_{j}\}, & E(T_{k}) & =E(T).
\end{align*}
\end{definition}
\begin{lemma}
\label{lem:network_split}Let $\T=(\{J_{1},\ldots,J_{\ell}\},T)$
be a tree decomposition for the graph $G$. For every $i\in V(G)$
and
\[
A=\alpha_{i}e_{i}e_{i}^{T}+\sum_{(i,j)\in E(G)}\alpha_{j}(e_{i}e_{j}^{T}+e_{j}e_{i}^{T}),
\]
there exists $A_{j}$ for $j\in V(T_{i})$ such that
\[
A\bullet X=\sum_{j\in V(T_{i})}A_{j}\bullet X[J_{j},J_{j}]\qquad\text{for all }X\in\S^{n}.
\]
\end{lemma}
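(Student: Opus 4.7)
The plan is to explicitly construct the split matrices $A_j$ by distributing each nonzero entry of $A$ to a suitable bag in $V(T_i)$. Observe first that
\[
A \bullet X = \alpha_i X[i,i] + 2\sum_{(i,k)\in E(G)} \alpha_k X[i,k],
\]
so the only matrix entries of $X$ that affect $A \bullet X$ are $X[i,k]$ with either $k=i$ or $(i,k)\in E(G)$. In particular, every such $k$ is either equal to $i$ or adjacent to $i$ in $G$.

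Next, for each relevant index $k$ (with $\alpha_k \ne 0$), I would invoke the edge cover property (or vertex cover when $k=i$) of the tree decomposition $\T$ to select a bag $J_{\phi(k)}$ with $\{i,k\} \subseteq J_{\phi(k)}$. The key structural point is that $i \in J_{\phi(k)}$ forces $\phi(k) \in V(T_i)$ by the very definition of the induced subtree. Thus every nonzero entry of $A$ can be assigned to a bag inside $V(T_i)$.

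Given $\phi$, I would define $A_j$ for each $j \in V(T_i)$ as the $|J_j| \times |J_j|$ symmetric matrix whose entries are zero except at positions indexed by the pair $(i,k)$ (in the local ordering of $J_j$) for each $k$ with $\phi(k)=j$, where the value is chosen to reproduce the corresponding term of $A\bullet X$. For $j \notin V(T_i)$, set $A_j = 0$. A direct computation then gives
\[
\sum_{j\in V(T_i)} A_j \bullet X[J_j,J_j] = \sum_k \alpha_k\,(\text{entry at }(i,k)\text{ of }X) = A\bullet X,
\]
because each nonzero entry of $A$ is accounted for exactly once, at bag $J_{\phi(k)}$.

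There is essentially no hard step here: the result is a specialization of the set-cover construction underlying Algorithm~\ref{alg:setcover-tree}, made trivial by the fact that every nonzero entry of $A$ shares the common row index $i$, so the bags eligible to cover these entries are exactly the bags containing $i$, which by definition form $V(T_i)$. The only mild care needed is to choose the factor of $2$ correctly so that the symmetric off-diagonal contributions $X[i,k]$ and $X[k,i]$ are not double-counted across bags; this is handled by the Frobenius inner product convention, since each off-diagonal pair is assigned to a single bag $J_{\phi(k)}$.
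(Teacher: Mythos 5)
Your proposal is correct and matches the paper's own proof in essence: both assign each nonzero entry $X[i,k]$ to a bag containing $\{i,k\}$ via the edge/vertex cover property and observe that any such bag lies in $V(T_i)$ by definition of the induced subtree. The only cosmetic difference is that the paper distributes the diagonal term $\alpha_i X[i,i]$ evenly as $\alpha_i/\deg_i$ over the chosen bags while you assign it to a single bag, which is immaterial.
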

\begin{proof}
We give an explicit construction. Iterate $j$ over the neighbors
$\mathrm{nei}(i)=\{j:(i,j)\in E(G)\}$ of $i$. By the edge cover
property of the tree decomposition, there exists $k\in\{1,\ldots,\ell\}$
satisfying $i,j\in J_{k}$. Moreover, $k\in V(T_{i})$ because $i\in J_{k}$.
Define $A_{k}$ to satisfy 
\[
A_{k}\bullet X[J_{k},J_{k}]=(\alpha_{i}/\deg_{i})X[i,i]+\alpha_{j}(X[i,j]+X[j,i]),
\]
where $\deg_{i}=|\mathrm{nei}(i)|$. \qed
\end{proof}

If each network flow constraint is split using according to Lemma~\ref{lem:network_split},
then the number of auxiliary variables needed to decouple the problem
can be bounded. This results in a proof of Theorem~\ref{thm:comp3}.

\begin{proof}[Theorem~\ref{thm:comp3}]By hypothesis, $\T=\{J_{1},\ldots,J_{\ell}\}$
is a tree decomposition for the sparsity graph of the data matrices
$C,A_{1},\ldots,A_{m}$, and each $A_{i}$ can be split according
to Lemma~\ref{lem:network_split} onto a connected subtree of $T$.
We proceed to solve (\ref{eq:SDP}) using Algorithm~\ref{alg:dctcaux}.
We perform Step 1 in closed-form, by splitting each $A_{i}$ in according
to Lemma~\ref{lem:network_split}. The cost of Steps 2 and 3 are
then bound as $\nnz(\A)+\nnz(\N)=O(\omega^{3}n)$ time and memory.
The cost of step 5 is also $O(\omega^{3}n)$ time and $O(\omega^{2}n)$
memory, using the same reasoning as the proof of Theorem~\ref{thm:comp2}.

To quantify the cost of Step 4, we must show that under the conditions
stated in the theorem, the maximum number of auxiliary variables added
to each variable block is bound $\gamma_{j}\le m_{k}\cdot\omega\cdot d_{\max}$.
We do this via the following line of reasoning:
\begin{itemize}
\item A single network flow constraint at vertex $k$ contributes $|\ch(j)|\le d_{\max}$
auxiliary variables to every $j$-th index set $J_{j}$ satisfying
$j\in V(T_{k})$. 
\item Having one network flow constraint at every $k\in\{1,\ldots,\ell\}$
contributes at most $\omega\cdot d_{\max}$ auxiliary variables to
every $j$-th clique $J_{j}$. This is because the set of $V(T_{k})$
for which $j\in V(T_{k})$ is exactly $J_{j}=\{\{1,\ldots,\ell\}:j\in V(T_{k})\}$,
and $|J_{j}|\le\omega$ by definition.
\item Having $m_{k}$ network flow constraints at each $k\in\{1,\ldots,\ell\}$
contributes at most $m_{k}\cdot\omega\cdot d_{\max}$ auxiliary variables
to every $j$-th clique $J_{j}$.
\end{itemize}
Finally, applying $\gamma_{j}\le m_{k}\cdot\omega\cdot d_{\max}$
to Lemma~\ref{lem:aux} yields the desired complexity figure, which
dominates the cost of the entire algorithm. \qed \end{proof}

\section{\label{sec:num-exp}Numerical Experiments}

Using the techniques described in this paper, we solve sparse semidefinite
programs posed on the 40 power system test cases in the MATPOWER suite~\cite{zimmerman2011matpower},
each with number of constraints $m$ comparable to $n$. The largest
two cases have $n=9241$ and $n=13659$, and are designed to accurately
represent the size and complexity of the European high voltage electricity
transmission network~\cite{josz2016ac}. In all of our trials below,
the accuracy of a primal-dual iterate $(X,y,S)$ is measured using
the DIMACS feasibility and duality gap metrics~\cite{mittelmann2003independent}
and stated as the number of accurate decimal digits:
\begin{align*}
\mathrm{pinf} & =-\log_{10}\left[\;\|\mathcal{A}(X)-b\|_{2}/(1+\|b\|_{2})\;\right],\\
\mathrm{dinf} & =-\log_{10}\left[\;\lambda_{\max}(\mathcal{A}^{T}(y)-C)/(1+\|C\|_{2})\;\right],\\
\mathrm{gap} & =-\log_{10}\left[\;(C\bullet X-b^{T}y)/(1+|C\bullet X|+|b^{T}y|)\;\right],
\end{align*}
where $\mathcal{A}(X)=[A_{i}\bullet X]_{i=1}^{m}$ and $\mathcal{A}^{T}(y)=\sum_{i=1}^{m}y_{i}A_{i}$.
We will frequently measure the overall number of accurate digits as
$L=\min\{\mathrm{gap},\mathrm{pinf},\mathrm{dinf}\}$. 

In our trials, we implement Algorithm~\ref{alg:dctc} and Algorithm~\ref{alg:dctcaux}
in MATLAB using a version of SeDuMi v1.32~\cite{sturm1999using}
that is modified to force a specific fill-reducing permutation during
symbolic factorization. The actual block topological ordering that
we force SeDuMi to use is a simple postordering of the elimination
tree. For comparison, we also implement both algorithms using the
standard off-the-shelf version of MOSEK v8.0.0.53~\cite{andersen2000mosek},
without forcing a specific fill-reducing permutation. The experiments
are performed on a Xeon 3.3 GHz quad-core CPU with 16 GB of RAM. 

\subsection{Elimination trees with small widths}

\begin{figure}
\begin{lstlisting}
E = C | A1 | A2 | A3 | A4;
p = amd(E); % fill-reducing ordering
[~,~,parT,~,R] = symbfact(E(p,p), 'sym', 'lower');
R(p,:) = R; % Reverse the ordering
for i = 1:n, J{i} = find(R(:,i)); end
\end{lstlisting}

\caption{\label{fig:MATLAB-code-snippet}MATLAB code for computing the tree
decomposition of a given sparsity graph. The code terminates with
tree decomposition $\protect\T=(\protect\J,T)$ in which the index
sets $\protect\J=\{J_{1},\ldots,J_{n}\}$ are stored as the cell array
\texttt{J}, and the tree $T$ is stored in terms of its parent pointer
\texttt{parT}.}
\end{figure}
\begin{table}
\caption{\label{tab:treedecomp}Tree decompositions for the 40 test power systems
under study: $|V(G)|$ - number of vertices; $|E(G)|$ - number of
edges; $|\protect\J|$ - number of bags in $\protect\T$; $\omega=1+\protect\wid(\protect\T)$
- computed clique number; ``Time'' - total computation time in seconds.}

\setlength\tabcolsep{3pt}
\resizebox{\textwidth}{!}{

\begin{tabular}{|c|cccccc|c|cccccc|}
\hline 
\# & Name & $|V(G)|$ & $|E(G)|$ & \color{blue} $|\J|$ & $\omega$ & Time & \# & Name & $|V(G)|$ & $|E(G)|$ & \color{blue} $|\J|$ & $\omega$ & Time\tabularnewline
\hline 
1 & case4gs & 4 & 4 & \color{blue} 2 & 3 & 0.171 & 21 & case1354pegase & 1354 & 1991 & \color{blue} 1287 & 13 & 0.155\tabularnewline
2 & case5 & 5 & 6 & \color{blue} 3 & 3 & 0.030 & 22 & case1888rte & 1888 & 2531 & \color{blue} 1816 & 13 & 0.213\tabularnewline
3 & case6ww & 6 & 11 & \color{blue} 2 & 4 & 0.014 & 23 & case1951rte & 1951 & 2596 & \color{blue} 1879 & 14 & 0.219\tabularnewline
4 & case9 & 9 & 9 & \color{blue} 7 & 3 & 0.027 & 24 & case2383wp & 2383 & 2896 & \color{blue} 2312 & 25 & 0.278\tabularnewline
5 & case9Q & 9 & 9 & \color{blue} 7 & 3 & 0.011 & 25 & case2736sp & 2736 & 3504 & \color{blue} 2652 & 25 & 0.310\tabularnewline
6 & case9target & 9 & 9 & \color{blue} 7 & 3 & 0.002 & 26 & case2737sop & 2737 & 3506 & \color{blue} 2653 & 24 & 0.317\tabularnewline
7 & case14 & 14 & 20 & \color{blue} 12 & 3 & 0.006 & 27 & case2746wop & 2746 & 3514 & \color{blue} 2653 & 26 & 0.314\tabularnewline
8 & case24\_ieee\_rts & 24 & 38 & \color{blue} 20 & 5 & 0.017 & 28 & case2746wp & 2746 & 3514 & \color{blue} 2659 & 24 & 0.312\tabularnewline
9 & case30 & 30 & 41 & \color{blue} 26 & 4 & 0.005 & 29 & case2848rte & 2848 & 3776 & \color{blue} 2739 & 18 & 0.334\tabularnewline
10 & case30Q & 30 & 41 & \color{blue} 26 & 4 & 0.004 & 30 & case2868rte & 2868 & 3808 & \color{blue} 2763 & 17 & 0.323\tabularnewline
11 & case30pwl & 30 & 41 & \color{blue} 26 & 4 & 0.004 & 31 & case2869pegase & 2869 & 4582 & \color{blue} 2700 & 15 & 0.317\tabularnewline
12 & case\_ieee30 & 30 & 41 & \color{blue} 26 & 4 & 0.004 & 32 & case3012wp & 3012 & 3572 & \color{blue} 2916 & 28 & 0.344\tabularnewline
13 & case33bw & 33 & 37 & \color{blue} 32 & 2 & 0.005 & 33 & case3120sp & 3120 & 3693 & \color{blue} 3029 & 27 & 0.353\tabularnewline
14 & case39 & 39 & 46 & \color{blue} 34 & 4 & 0.005 & 34 & case3375wp & 3374 & 4161 & \color{blue} 3248 & 30 & 0.378\tabularnewline
15 & case57 & 57 & 80 & \color{blue} 52 & 6 & 0.010 & 35 & case6468rte & 6468 & 9000 & \color{blue} 6153 & 30 & 0.725\tabularnewline
16 & case89pegase & 89 & 210 & \color{blue} 77 & 12 & 0.011 & 36 & case6470rte & 6470 & 9005 & \color{blue} 6149 & 30 & 0.716\tabularnewline
17 & case145 & 145 & 453 & \color{blue} 111 & 5 & 0.018 & 37 & case6495rte & 6495 & 9019 & \color{blue} 6171 & 31 & 0.713\tabularnewline
18 & case118 & 118 & 186 & \color{blue} 108 & 11 & 0.020 & 38 & case6515rte & 6515 & 9037 & \color{blue} 6193 & 31 & 0.716\tabularnewline
19 & case\_illinois200 & 200 & 245 & \color{blue} 189 & 9 & 0.024 & 39 & case9241pegase & 9241 & 16049 & \color{blue} 8577 & 35 & 1.009\tabularnewline
20 & case300 & 300 & 411 & \color{blue} 278 & 7 & 0.035 & 40 & case13659pegase & 13659 & 20467 & \color{blue} 12997 & 35 & 1.520\tabularnewline
\hline 
\end{tabular}

}
\end{table}
We begin by computing tree decompositions using MATLAB's internal
approximate minimum degree heuristic (due to Amestoy, Davis and Duff~\cite{amestoy2004algorithm}).
A simplified version of our code is shown as the snippet in Figure~\ref{fig:MATLAB-code-snippet}.
(Our actual code uses Algorithm~4.1 in~\cite{vandenberghe2015chordal}
to reduce the computed elimination tree to the \emph{supernodal} elimination
tree, for a slight reduction in the number of index sets $\ell$.)
Table~\ref{tab:treedecomp} gives the details and timings for the
40 power system graphs from the MATPOWER suite~\cite{zimmerman2011matpower}.
As shown, we compute tree decompositions with $\wid(\T)\le34$ in
less than $2$ seconds. In practice, the bottleneck of the preprocessing
step is not the tree decomposition, but the constraint splitting step
in Algorithm~\ref{alg:setcover-tree}.

\subsection{MAX 3-CUT and Lovasz Theta}

\begin{figure}
\subfloat[]{\includegraphics[width=0.49\columnwidth]{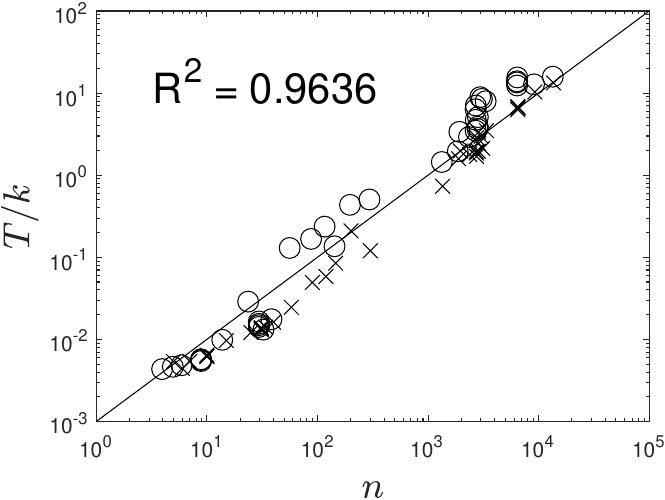}

}\subfloat[]{\includegraphics[width=0.49\columnwidth]{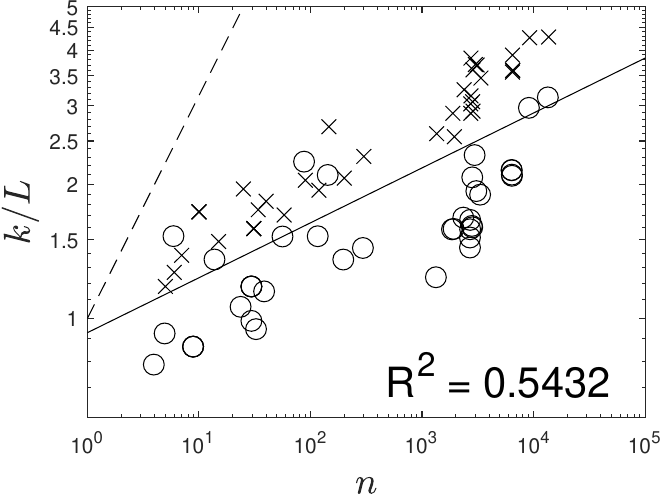}

}

\caption{\label{fig:results}SeDuMi Timings for MAX $3$-CUT ($\circ$) and
Lovasz Theta ($\times$) problems: (a) Time per iteration, with regression
$T/k=10^{-3}n$; (b) Iterations per decimal digit of accuracy, with
(solid) regression $k/L=0.929n^{0.123}$ and (dashed) bound $k/L=\sqrt{n}$.}

\vspace*{\floatsep}

\subfloat[]{\includegraphics[width=0.49\columnwidth]{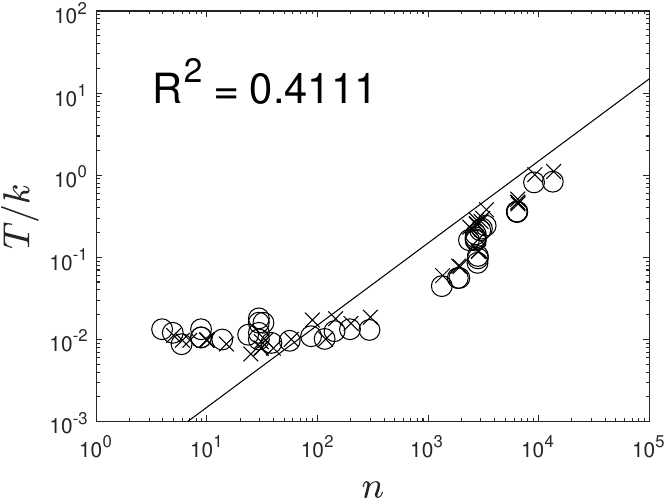}

}\subfloat[]{\includegraphics[width=0.49\columnwidth]{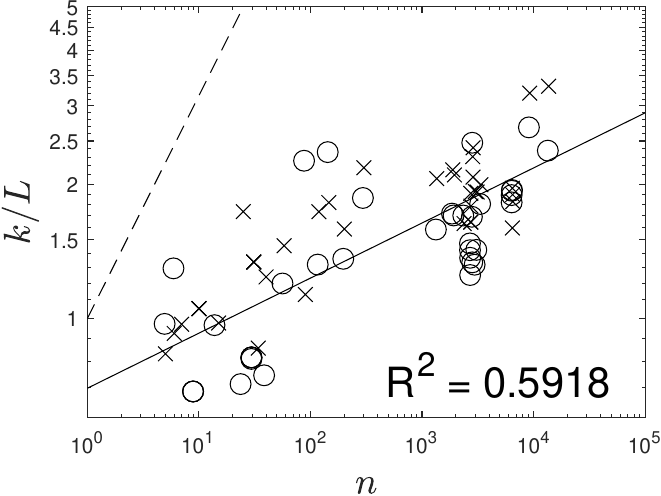}

}

\caption{\label{fig:resultsMOSEK}MOSEK Timings for MAX $3$-CUT ($\circ$)
and Lovasz Theta ($\times$) problems: (a) Time per iteration, with
regression $T/k=1.488\times10^{-4}n$; (b) Iterations per decimal
digit of accuracy, with (solid) regression $k/L=0.697n^{0.123}$ and
(dashed) bound $k/L=\sqrt{n}$.}
\end{figure}
We begin by considering the MAX 3-CUT and Lovasz Theta problems, which
are partially separable by default, and hence have solution complexities
of $O(n^{1.5})$ time and $O(n)$ memory. For each of the 40 test
cases, we use the MATPOWER function \texttt{makeYbus} to generate
the bus admittance matrix $Y_{bus}=[Y_{i,j}]_{i,j=1}^{n},$ and symmetrize
to yield $Y_{abs}=\frac{1}{2}[|Y_{i,j}|+|Y_{j,i}|]_{i,j=1}^{n}$.
We view this matrix as the weighted adjacency matrix for the system
graph. For MAX 3-CUT, we define the weighted Laplacian matrix $C=\diag(Y_{abs}\one)-Y_{abs}$,
and set up problem (\ref{eq:MkC}). For Lovasz Theta, we extract the
location of the graph edges from $Y_{abs}$ and set up (\ref{eq:LTp}). 

First, we use Algorithm~\ref{alg:dctc} with the modified version
of SeDuMi to solve the 80 instances of (\ref{eq:SDP}). Of the 80
instances considered, 79 solved to $L\ge5$ digits in $k\le23$ iterations
and $T\le306$ seconds; the largest instance solved to $L=4.48$.
Table~\ref{tab:maxcut} shows the accuracy and timing details for
the 20 largest problems solved. Figure~\ref{fig:results}a plots
$T/k$, the mean time taken per-iteration. As we guaranteed in Lemma~\ref{lem:lin_indep},
the per-iteration time is linear with respect to $n$. A log-log regression
yields $T/k=10^{-3}n$, with $R^{2}=0.9636$. Figure~\ref{fig:results}b
plots $k/L$, the number of iterations to a factor-of-ten error reduction.
We see that SeDuMi's guaranteed iteration complexity $k=O(\sqrt{n}\log\epsilon^{-1})=O(\sqrt{n}L)$
is a significant over-estimate; a log-log regression yields $k/L=0.929n^{0.123}\approx n^{1/8}$,
with $R^{2}=0.5432$. Combined, the data suggests an actual time complexity
of $T\approx10^{-3}n^{1.1}L$.

Next, we use Algorithm~\ref{alg:dctc} alongside the off-the-shelf
version of MOSEK to solve the 80 same instances. It turns out that
MOSEK is both more accurate than SeDuMi, as well as a factor of 5-10
faster. It manages to solve all 80 instances to $L\ge6$ digits in
$k\le21$ iterations and $T\le24$ seconds. Table~\ref{tab:theta}
shows the accuracy and timing details for the 20 largest problems
solved. Figure~\ref{fig:resultsMOSEK}a plots $T/k$, the mean time
taken per-iteration. Despite not forcing the use of a block topological
ordering, MOSEK nevertheless attains an approximately linear per-iteration
cost. Figure~\ref{fig:resultsMOSEK}b plots $k/L$, the number of
iterations to a factor-of-ten error reduction. Again, we see that
MOSEK's guaranteed iteration complexity $k=O(\sqrt{n}\log\epsilon^{-1})=O(\sqrt{n}L)$
is a significant over-estimate. A log-log regression yields an empirical
time complexity of $T\approx10^{-4}n^{1.12}L$, which is very close
to being linear-time.

\begin{table}
\caption{\label{tab:maxcut}Accuracy (in decimal digits) and timing (in seconds)
for 20 largest MAX $3$-CUT problems: $n$ - order of matrix variable;
$m$ - number of constraints; ``Pre-proc'' - post-processing time;
``gap'' - duality gap; ``pinf'' - primal infeasibility; ``dinf''
- dual infeasibility; $k$ - number of interior-point iterations;
$T$ - total interior-point time; ``Post-proc'' - post-processing
time.}

\resizebox{\textwidth}{!}{

\begin{tabular}{|c|cc|c|ccccc|ccccc|c|}
\hline 
 &  &  & Pre- & \multicolumn{5}{c|}{MOSEK} & \multicolumn{5}{c|}{SeDuMi} & Post-\tabularnewline
\cline{5-14} \cline{6-14} \cline{7-14} \cline{8-14} \cline{9-14} \cline{10-14} \cline{11-14} \cline{12-14} \cline{13-14} \cline{14-14} 
\#  & $n$ & $m$ & proc & gap & pinf & dinf & $k$ & $T$ & gap & pinf & dinf & $k$ & $T$ & proc.\tabularnewline
\hline 
21 & 1354 & 3064 & 1.1 & 9.6 & 8.9 & 9.1 & 14 & 0.6 & 11.6 & 7.5 & 9.7 & 12 & 17.0 & 0.1\tabularnewline
22 & 1888 & 4196 & 1.5 & 8.9 & 8.2 & 8.4 & 14 & 0.8 & 8.2 & 7.1 & 9.4 & 13 & 24.9 & 0.2\tabularnewline
23 & 1951 & 4326 & 1.6 & 8.9 & 8.3 & 8.4 & 14 & 0.8 & 8.9 & 7.3 & 10.1 & 14 & 46.2 & 0.2\tabularnewline
24 & 2383 & 5269 & 2.1 & 9.0 & 8.3 & 8.4 & 14 & 2.2 & 7.8 & 7.3 & 8.5 & 13 & 37.1 & 0.4\tabularnewline
25 & 2736 & 5999 & 2.4 & 8.8 & 8.2 & 8.3 & 12 & 2.0 & 12.0 & 7.5 & 10.6 & 16 & 99.6 & 0.4\tabularnewline
26 & 2737 & 6000 & 2.4 & 9.0 & 8.5 & 8.5 & 12 & 1.9 & 11.4 & 6.8 & 9.7 & 14 & 47.7 & 0.4\tabularnewline
27 & 2746 & 6045 & 2.4 & 11.3 & 10.4 & 10.8 & 13 & 2.4 & 11.3 & 6.4 & 9.5 & 15 & 69.3 & 0.4\tabularnewline
28 & 2746 & 6019 & 2.4 & 10.9 & 10.3 & 10.3 & 14 & 2.2 & 11.9 & 7.1 & 10.3 & 17 & 117.3 & 0.4\tabularnewline
29 & 2848 & 6290 & 2.5 & 8.9 & 8.3 & 8.4 & 14 & 1.2 & 8.1 & 6.9 & 9.4 & 13 & 46.7 & 0.4\tabularnewline
30 & 2868 & 6339 & 2.6 & 10.4 & 9.8 & 9.9 & 13 & 1.2 & 8.2 & 6.9 & 9.5 & 13 & 49.5 & 0.4\tabularnewline
31 & 2869 & 6837 & 2.7 & 8.7 & 8.1 & 8.2 & 20 & 2.0 & 9.6 & 5.2 & 8.2 & 17 & 84.7 & 0.5\tabularnewline
32 & 3012 & 6578 & 2.7 & 9.8 & 9.1 & 9.3 & 12 & 2.5 & 7.8 & 7.3 & 10.1 & 18 & 157.1 & 0.5\tabularnewline
33 & 3120 & 6804 & 2.8 & 9.3 & 8.5 & 8.7 & 12 & 2.6 & 11.7 & 7.7 & 10.4 & 20 & 166.4 & 0.5\tabularnewline
34 & 3374 & 7442 & 3.2 & 9.1 & 8.3 & 8.5 & 15 & 3.6 & 10.0 & 5.8 & 8.5 & 16 & 124.7 & 0.6\tabularnewline
35 & 6468 & 14533 & 7.6 & 9.2 & 8.5 & 8.7 & 16 & 5.6 & 9.4 & 4.9 & 7.5 & 16 & 210.9 & 1.6\tabularnewline
36 & 6470 & 14536 & 7.6 & 9.7 & 8.8 & 9.2 & 16 & 5.6 & 9.4 & 5.0 & 7.5 & 16 & 218.2 & 1.6\tabularnewline
37 & 6495 & 14579 & 7.7 & 9.0 & 8.3 & 8.5 & 16 & 5.6 & 9.4 & 4.7 & 7.6 & 16 & 193.8 & 1.6\tabularnewline
38 & 6515 & 14619 & 7.7 & 9.0 & 8.3 & 8.5 & 16 & 5.6 & 9.8 & 5.3 & 8.2 & 17 & 257.8 & 1.6\tabularnewline
39 & 9241 & 23448 & 14.0 & 9.2 & 8.2 & 8.7 & 22 & 17.6 & 5.1 & 4.1 & 6.6 & 15 & 187.5 & 3.5\tabularnewline
40 & 13659 & 32284 & 23.5 & 9.2 & 8.4 & 8.7 & 20 & 16.3 & 4.5 & 3.8 & 6.0 & 14 & 216.9 & 6.0\tabularnewline
\hline 
\end{tabular}

}
\vspace*{\floatsep}

\caption{\label{tab:theta}Accuracy and Timing for 20 largest Lovasz Theta
problems. }

\resizebox{\textwidth}{!}{

\begin{tabular}{|c|cc|c|ccccc|ccccc|c|}
\hline 
 &  &  & Pre- & \multicolumn{5}{c|}{MOSEK} & \multicolumn{5}{c|}{SeDuMi} & Post-\tabularnewline
\cline{5-14} \cline{6-14} \cline{7-14} \cline{8-14} \cline{9-14} \cline{10-14} \cline{11-14} \cline{12-14} \cline{13-14} \cline{14-14} 
\#  & $n$ & $m$ & proc & gap & pinf & dinf & $k$ & $T$ & gap & pinf & dinf & $k$ & $T$ & proc.\tabularnewline
\hline 
21 & 1355 & 1711 & 0.8 & 11.6 & 8.3 & 8.8 & 17 & 1.0 & 6.4 & 5.4 & 6.2 & 16 & 11.7 & 0.2\tabularnewline
22 & 1889 & 2309 & 1.2 & 11.4 & 7.9 & 8.4 & 17 & 1.3 & 5.9 & 5.1 & 6.7 & 17 & 27.0 & 0.3\tabularnewline
23 & 1952 & 2376 & 1.2 & 11.2 & 7.6 & 8.1 & 16 & 1.2 & 6.3 & 5.5 & 6.9 & 16 & 31.6 & 0.3\tabularnewline
24 & 2384 & 2887 & 1.6 & 12.2 & 8.6 & 9.1 & 14 & 3.3 & 5.8 & 5.1 & 6.6 & 19 & 33.9 & 0.4\tabularnewline
25 & 2737 & 3264 & 1.8 & 11.4 & 7.9 & 8.4 & 13 & 3.4 & 6.6 & 5.6 & 6.8 & 19 & 36.9 & 0.5\tabularnewline
26 & 2738 & 3264 & 1.8 & 10.9 & 7.3 & 7.8 & 14 & 3.5 & 7.4 & 5.8 & 6.3 & 19 & 35.3 & 0.5\tabularnewline
27 & 2747 & 3300 & 1.8 & 13.2 & 9.1 & 9.8 & 15 & 4.3 & 5.2 & 4.7 & 6.7 & 20 & 57.6 & 0.6\tabularnewline
28 & 2747 & 3274 & 1.9 & 11.4 & 7.8 & 8.4 & 14 & 3.5 & 7.5 & 5.3 & 5.7 & 18 & 30.5 & 0.5\tabularnewline
29 & 2849 & 3443 & 1.9 & 11.1 & 7.4 & 7.8 & 17 & 2.0 & 8.5 & 5.1 & 5.6 & 17 & 33.3 & 0.5\tabularnewline
30 & 2869 & 3472 & 1.9 & 11.4 & 7.7 & 8.2 & 16 & 1.9 & 5.8 & 4.9 & 6.5 & 17 & 41.4 & 0.5\tabularnewline
31 & 2870 & 3969 & 2.0 & 11.1 & 7.5 & 7.9 & 18 & 3.0 & 6.1 & 5.2 & 6.1 & 22 & 74.0 & 0.6\tabularnewline
32 & 3013 & 3567 & 2.1 & 11.4 & 7.8 & 8.3 & 15 & 4.1 & 9.1 & 5.9 & 5.9 & 22 & 65.5 & 0.6\tabularnewline
33 & 3121 & 3685 & 2.2 & 14.6 & 8.9 & 10.5 & 17 & 5.1 & 8.8 & 5.6 & 5.7 & 21 & 44.5 & 0.7\tabularnewline
34 & 3375 & 4069 & 2.4 & 12.6 & 8.5 & 9.7 & 17 & 6.5 & 9.2 & 5.8 & 6.1 & 21 & 73.1 & 0.8\tabularnewline
35 & 6469 & 8066 & 5.5 & 13.7 & 8.8 & 9.4 & 14 & 7.2 & 5.1 & 4.8 & 6.9 & 20 & 137.0 & 2.1\tabularnewline
36 & 6471 & 8067 & 5.5 & 12.2 & 8.2 & 8.7 & 16 & 7.5 & 5.7 & 4.9 & 5.6 & 20 & 125.9 & 2.1\tabularnewline
37 & 6496 & 8085 & 5.6 & 12.9 & 9.0 & 9.4 & 17 & 7.9 & 5.7 & 4.9 & 5.6 & 20 & 131.4 & 2.0\tabularnewline
38 & 6516 & 8105 & 5.6 & 13.2 & 8.8 & 9.3 & 16 & 7.1 & 5.7 & 4.9 & 5.6 & 20 & 133.9 & 2.1\tabularnewline
39 & 9242 & 14208 & 10.0 & 10.4 & 6.2 & 6.7 & 20 & 20.3 & 6.2 & 4.7 & 5.4 & 23 & 237.2 & 4.6\tabularnewline
40 & 13660 & 18626 & 16.4 & 10.8 & 6.3 & 6.7 & 21 & 23.3 & 5.7 & 4.5 & 5.4 & 23 & 305.9 & 8.0\tabularnewline
\hline 
\end{tabular}

}
\end{table}

\subsection{Optimal power flow}

\begin{figure}
\subfloat[]{\includegraphics[width=0.49\columnwidth]{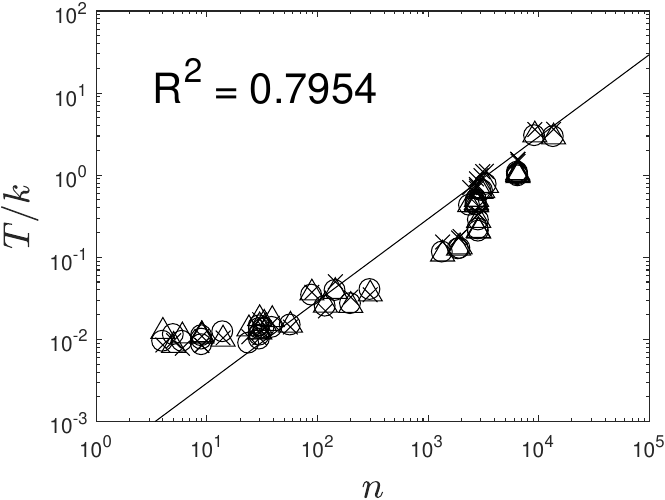}

}\subfloat[]{\includegraphics[width=0.49\columnwidth]{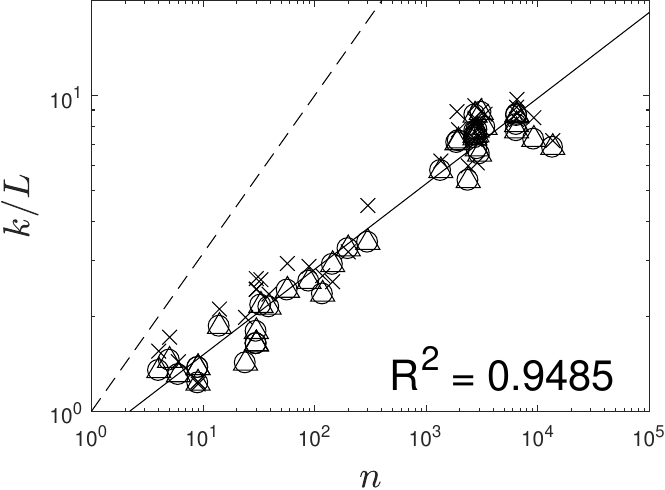}

}

\caption{\label{fig:results2}OPF problems solved using clique tree conversion
($\times$), dualized clique tree conversion ($\circ$) and dualized
clique tree conversion with auxiliary variables ($\triangle$): (a)
Time per iteration, with regression $T/k=2.931\times10^{-4}n$; (b)
Iterations per decimal digit of accuracy, with (solid) regression
$k/L=0.807n^{0.271}$ and (dashed) bound $k/L=\sqrt{n}$.}
\end{figure}
\begin{table}
\caption{\label{tab:opf}Accuracy (in decimal digits) and timing (in seconds)
for 20 largest OPF problems: $n$ - order of matrix variable; $m$
- number of constraints; ``Pre-proc'' - post-processing time; $L=\min\{\mathrm{gap},\mathrm{pinf},\mathrm{dinf}\}$
- accurate decimal digits; $k$ - number of interior-point iterations;
$T$ - total interior-point time; ``Post-proc'' - post-processing
time.}

\resizebox{\textwidth}{!}{

\begin{tabular}{|c|cc|c|ccc|ccc|ccc|c|}
\hline 
 &  &  & Pre- & \multicolumn{3}{c|}{CTC} & \multicolumn{3}{c|}{Dual CTC} & \multicolumn{3}{c|}{Dual CTC w/ aux} & Post-\tabularnewline
\cline{5-13} \cline{6-13} \cline{7-13} \cline{8-13} \cline{9-13} \cline{10-13} \cline{11-13} \cline{12-13} \cline{13-13} 
\#  & $n$ & $m$ & proc & $L$ & $k$ & $T$ & $L$ & $k$ & $T$ & $L$ & $k$ & $T$ & proc.\tabularnewline
\hline 
21 & 1354 & 4060 & 3.0 & 7.3 & 45 & 6.9 & 7.2 & 42 & 4.2 & 7.1 & 41 & 4.5 & 0.2\tabularnewline
22 & 1888 & 5662 & 4.1 & 7.2 & 64 & 11.0 & 6.9 & 48 & 6.3 & 6.8 & 48 & 6.2 & 0.3\tabularnewline
23 & 1951 & 5851 & 4.2 & 7.8 & 61 & 10.9 & 7.1 & 46 & 6.0 & 7.0 & 50 & 6.7 & 0.3\tabularnewline
24 & 2383 & 7147 & 5.9 & 7.2 & 43 & 30.4 & 6.9 & 38 & 16.6 & 6.9 & 37 & 16.2 & 0.4\tabularnewline
25 & 2736 & 8206 & 7.0 & 7.2 & 60 & 46.2 & 6.8 & 53 & 23.8 & 6.5 & 48 & 22.3 & 0.5\tabularnewline
26 & 2737 & 8209 & 6.8 & 7.1 & 66 & 45.7 & 6.7 & 57 & 24.5 & 6.9 & 53 & 23.1 & 0.5\tabularnewline
27 & 2746 & 8236 & 7.0 & 6.9 & 50 & 45.1 & 6.6 & 50 & 24.7 & 6.3 & 47 & 23.9 & 0.5\tabularnewline
28 & 2746 & 8236 & 6.9 & 7.1 & 60 & 44.2 & 6.7 & 56 & 25.2 & 6.9 & 60 & 26.7 & 0.6\tabularnewline
29 & 2848 & 8542 & 6.8 & 7.1 & 56 & 18.9 & 6.4 & 49 & 10.1 & 6.4 & 48 & 10.2 & 0.5\tabularnewline
30 & 2868 & 8602 & 6.8 & 7.4 & 56 & 18.8 & 6.6 & 51 & 10.6 & 6.7 & 52 & 10.8 & 0.5\tabularnewline
31 & 2869 & 8605 & 7.4 & 7.7 & 47 & 19.6 & 7.1 & 46 & 12.7 & 7.4 & 50 & 14.1 & 0.6\tabularnewline
32 & 3012 & 9034 & 7.9 & 7.0 & 55 & 54.5 & 6.1 & 54 & 31.6 & 6.9 & 45 & 28.5 & 0.6\tabularnewline
33 & 3120 & 9358 & 8.1 & 7.2 & 64 & 70.7 & 6.3 & 58 & 38.5 & 6.7 & 59 & 38.2 & 0.7\tabularnewline
34 & 3374 & 10120 & 8.9 & 7.1 & 62 & 69.3 & 6.6 & 56 & 39.8 & 6.6 & 52 & 39.0 & 0.7\tabularnewline
35 & 6468 & 19402 & 17.9 & 7.6 & 64 & 99.9 & 7.0 & 54 & 53.7 & 6.9 & 53 & 56.7 & 2.0\tabularnewline
36 & 6470 & 19408 & 18.0 & 7.4 & 68 & 106.1 & 6.8 & 57 & 56.3 & 6.9 & 56 & 57.2 & 2.0\tabularnewline
37 & 6495 & 19483 & 17.7 & 7.5 & 66 & 102.8 & 7.3 & 54 & 53.2 & 7.0 & 60 & 62.3 & 2.0\tabularnewline
38 & 6515 & 19543 & 17.7 & 7.2 & 70 & 103.4 & 6.8 & 54 & 54.7 & 6.8 & 59 & 58.1 & 2.0\tabularnewline
39 & 9241 & 27721 & 31.3 & 7.5 & 64 & 230.1 & 7.0 & 57 & 165.0 & 7.6 & 55 & 169.7 & 4.3\tabularnewline
40 & 13659 & 40975 & 47.9 & 6.8 & 49 & 177.4 & 7.9 & 48 & 154.6 & 7.9 & 54 & 157.4 & 7.7\tabularnewline
\hline 
\end{tabular}

}
\end{table}
We now solve instances of the OPF posed on the same 40 power systems
as mentioned above. Here, we use the MATPOWER function \texttt{makeYbus}
to generate the bus admittance matrix $Y_{bus}$, and then manually
generate each constraint matrix $A_{i}$ from $Y_{bus}$ using the
recipes described in~\cite{lavaei2012zero}. Specifically, we formulate
each OPF problem given the power flow case as follows: 
\begin{itemize}
\item Minimize the cost of generation. This is the sum of real-power injection
at each generator times \$1 per MW. 
\item Constrain all bus voltages to be from 95\% to 105\% of their nominal
values.
\item Constrain all load bus real-power and reactive-power values to be
from 95\% to 105\% of their nominal values.
\item Constrain all generator bus real-power and reactive-power values within
their power curve. The actual minimum and maximum real and reactive
power limits are obtained from the case description. 
\end{itemize}
We use three different algorithms based to solve the resulting semidefinite
program: 1) The original clique tree conversion of Fukuda and Nakata
et al.~\cite{fukuda2001exploiting,nakata2003exploiting} in Section~\ref{subsec:prelim_ctc};
2) Dualized clique tree conversion in Algorithm~\ref{alg:dctc};
3) Dualized clique tree conversion with auxiliary variables in Algorithm~\ref{alg:dctcaux}.
We solved all 40 problems using the three algorithms and MOSEK as
the internal interior-point solver. Table~\ref{tab:opf} shows the
accuracy and timing details for the 20 largest problems solved. All
three algorithms achieved near-linear time performance, solving each
problem instances to 7 digits of accuracy within 6 minutes. Upon closer
examination, we see that the two dualized algorithms are both about
a factor-of-two faster than the basic CTC method. Figure~\ref{fig:results2}
plots $T/k$, the mean time taken per-iteration, and $k/L$, the number
of iterations for a factor-of-ten error reduction, and their respective
log-log regressions. The data suggests an empirical time complexity
of $T\approx2.3\times10^{-4}n^{1.3}L$ over the three algorithms.

\section{Conclusion}

Clique tree conversion splits a large $n\times n$ semidefinite variable
$X\succeq0$ into up to $n$ smaller semidefinite variables $X_{j}\succeq0$,
coupled by a large number of overlap constraints. These overlap constraints
are a fundamental weakness of clique tree conversion, and can cause
highly sparse semidefinite program to be solved in as much as cubic
time and quadratic memory.

In this paper, we apply \emph{dualization} to clique tree decomposition.
Under a \emph{partially separable} sparsity assumption, we show that
the resulting normal equations have a block-sparsity pattern that
coincides with the adjacency matrix of a tree graph, so the per-iteration
time and memory complexity of an interior-point method is guaranteed
to be \emph{linear }with respect to $n$, the order of the matrix
variable $X$. Problems that do not satisfy the separable assumption
can be systematically separated by introducing auxiliary variables.
In the case of \emph{network flow} semidefinite programs, the number
of auxiliary variables can be bounded, so an interior-point method
again has a per-iteration time and memory complexity that is linear
with respect to $n$. 

Using these insights, we prove that the MAXCUT and MAX $k$-CUT relaxations,
the Lovasz Theta problem, and the AC optimal power flow relaxation
can all be solved with a guaranteed time and memory complexity that
is \emph{near-linear }with respect to $n$, assuming that a tree decomposition
with small width for the sparsity graph is known. Our numerical results
confirm an empirical time complexity that is \emph{linear} with respect
to $n$ on the MAX 3-CUT and Lovasz Theta relaxations.

\section*{Acknowledgments}

The authors are grateful to Daniel Bienstock, Salar Fattahi, Cédric
Josz, and Yi Ouyang for insightful discussions and helpful comments
on earlier versions of this manuscript. We thank Frank Permenter for
clarifications on various aspects of the homogeneous self-dual embedding
for SDPs. Finally, we thank the Associate Editor and Reviewer 2 for
meticulous and detailed comments that led to a significantly improved
paper. 

\appendix

\section{\label{sec:linindepslater}Linear independence and Slater's conditions}

In this section, we prove that (\ref{eq:CTC}) inherits the assumptions
of linear independence and Slater's conditions from (\ref{eq:SDP}).
We begin with two important technical lemmas.
\begin{lemma}
\label{lem:row-rank}The matrix $\N$ in (\ref{eq:Nmat}) has full
row rank, that is $\det(\N\N^{T})\ne0$.
\end{lemma}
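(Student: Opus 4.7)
The plan is to prove full row rank by induction on the number of nodes of the tree $T$, peeling off one leaf at a time. Recall from (\ref{eq:Nmat}) that each non-root node $i$ contributes one block-row $\N_i=[\N_{i,1},\ldots,\N_{i,\ell}]$, whose only nonzero blocks are $\N_{i,i}$ (implementing $+\vector(\NN_{p(i),i}(X_i))$) and $\N_{i,p(i)}$ (implementing $-\vector(\NN_{i,p(i)}(X_{p(i)}))$). The tree-like structure of these nonzero blocks is exactly what allows a leaf-pruning argument.

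The base case is a single-node tree, for which $\N$ is empty and the claim is vacuous. For the inductive step, pick any leaf $\ell\in V(T)$. Since $\ell$ has no children, no other block-row $\N_j$ with $j\ne\ell$ touches the column block associated with $x_\ell$: indeed, $\N_{j,\ell}\ne0$ requires either $j=\ell$ or $p(j)=\ell$, both of which are excluded. Consequently, if $\N^T v=0$ with $v=[v_i]$ indexed by the non-root nodes, then restricting to the column block of $x_\ell$ yields the single equation $\N_{\ell,\ell}^T v_\ell=0$.

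The key sublemma is then that $\N_{\ell,\ell}$ itself has full row rank. This follows immediately once one observes that $\NN_{p(\ell),\ell}$ is a principal-submatrix restriction to the indices $J_\ell\cap J_{p(\ell)}$, so in the symmetric vectorization $\vector(\NN_{p(\ell),\ell}(X_\ell))$ each coordinate equals a distinct coordinate of $\vector(X_\ell)$ (with matching $\sqrt{2}$ scaling on off-diagonals). Hence $\N_{\ell,\ell}$ is, up to a permutation, the horizontal concatenation $[I\;\;0]$, and its rows are trivially linearly independent. We conclude $v_\ell=0$.

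With $v_\ell$ eliminated, the remaining system coincides with the overlap matrix constructed from the smaller tree $T\setminus\{\ell\}$ (the leaf's block-row contributed nothing else, and no other block-row refers to $\ell$), so the inductive hypothesis gives $v_i=0$ for all remaining $i$. This forces $v=0$ and proves $\N$ has full row rank, equivalently $\det(\N\N^T)\ne0$. I do not anticipate a serious obstacle: the only nontrivial ingredient is the identification of $\N_{\ell,\ell}$ as a coordinate-selection operator, which can be stated in one line using the definition of the symmetric vectorization $\vector(\cdot)$ and of $\NN_{i,j}(\cdot)$.
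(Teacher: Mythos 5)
Your proof is correct and is essentially the paper's argument in inductive form: the paper orders the blocks topologically so that $\N$ becomes block triangular with diagonal blocks $\N_{j,j}$ that are coordinate-selection surjections, whereas you peel leaves one at a time, which amounts to unrolling that same topological ordering. The key ingredients\textemdash the tree-incidence block structure of $\N$ and the identification of $\N_{j,j}$ as (a permutation of) $[I\;\,0]$\textemdash are identical in both.
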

\begin{proof}
We make $\N=[\N_{i,j}]_{i,j=1}^{\ell}$ upper-triangular by ordering
its blocks topologically on $T$: each nonempty block row $\N_{j}$
contains a nonzero block at $\N_{j,j}$ and a nonzero block at $\N_{j,p(j)}$
where the parent node $p(j)>j$ is ordered after $j$. Then, the claim
follows because each diagonal block $\N_{j,j}$ implements a surjection
and must therefore have full row-rank. \qed
\end{proof}

\begin{lemma}[Orthogonal complement]
\label{lem:orth}Let $d=\frac{1}{2}\sum_{j=1}^{\ell}|J_{j}|(|J_{j}|+1)$.
Implicitly define the $d\times\frac{1}{2}n(n+1)$ matrix $\P$ to
satisfy
\begin{align*}
\P\,\vector(X) & =[\vector(X[J_{1},J_{1}]);\ldots;\vector(X[J_{\ell},J_{\ell}])]\qquad\forall X\in\S^{n}.
\end{align*}
Then, (i) $\N\P=0$; (ii) every $x\in\R^{d}$ can be decomposed as
$x=\N^{T}u+\P v$. 
\end{lemma}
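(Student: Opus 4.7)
\textbf{Plan for Lemma~\ref{lem:orth}.} The plan is to establish (i) by a direct block-wise computation using the definitions of $\P$ and $\N$, and then to deduce (ii) by combining (i) with Theorem~\ref{thm:ctree} via an orthogonal decomposition argument based on Lemma~\ref{lem:row-rank}.

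For (i), I would unpack $\N\P\,\vector(X)$ for an arbitrary $X\in\S^n$. By the defining property of $\P$, the $j$-th block of $\P\,\vector(X)$ equals $\vector(X[J_j,J_j])$. Substituting this into the block definition (\ref{eq:Nmat}) of $\N$, the $i$-th block of $\N\P\,\vector(X)$ for any non-root $i$ becomes
\[
\vector\bigl(\NN_{p(i),i}(X[J_i,J_i])\bigr)-\vector\bigl(\NN_{i,p(i)}(X[J_{p(i)},J_{p(i)}])\bigr).
\]
Both overlap operators evaluate to $\vector(X[J_i\cap J_{p(i)},\,J_i\cap J_{p(i)}])$, so the difference vanishes identically in $X$. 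Hence $\N\P=0$.

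For (ii), the key observation is that (i) together with Theorem~\ref{thm:ctree} yields the subspace equality $\mathrm{range}(\P)=\mathrm{null}(\N)$. One inclusion is exactly (i). For the reverse, take $x\in\mathrm{null}(\N)$ and unstack it as $x=[\vector(X_j)]_{j=1}^{\ell}$ with $X_j\in\S^{|J_j|}$. The algebraic equation $\N x=0$ reads, block by block through (\ref{eq:Nmat}), as $\NN_{p(i),i}(X_i)=\NN_{i,p(i)}(X_{p(i)})$ for every non-root $i$. Since the non-root block rows index exactly the $\ell-1$ edges of $T$, this is precisely the hypothesis of Theorem~\ref{thm:ctree}, which then supplies $Z\in\S^n$ with $Z[J_j,J_j]=X_j$ for all $j$, so $x=\P\,\vector(Z)\in\mathrm{range}(\P)$.

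To conclude (ii), I would invoke the orthogonal decomposition $\R^d=\mathrm{range}(\N^T)\oplus\mathrm{null}(\N)$, which is valid because $\N$ has full row rank by Lemma~\ref{lem:row-rank}. Writing $x=x_\perp+x_\parallel$ accordingly gives $x_\perp=\N^T u$ for some $u$ and $x_\parallel=\P v$ for some $v$ by the equality established above, producing the claimed decomposition. The only mildly delicate step is the translation between the algebraic condition $\N x=0$ and the combinatorial overlap-agreement condition of Theorem~\ref{thm:ctree}; once the block structure from (\ref{eq:Nmat}) is read off carefully, both parts reduce to standard linear algebra.
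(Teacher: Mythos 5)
Your proof is correct and follows essentially the same route as the paper: both arguments reduce to the subspace identity $\mathrm{range}(\P)=\mathrm{null}(\N)$ obtained from Theorem~\ref{thm:ctree}, combined with the orthogonal decomposition $\R^{d}=\mathrm{range}(\N^{T})\oplus\mathrm{null}(\N)$ (your direct block-wise verification of (i) is a harmless elaboration of the paper's appeal to the ``only if'' direction of that theorem). One small remark: the appeal to Lemma~\ref{lem:row-rank} is unnecessary, since $\mathrm{null}(\N)=\mathrm{range}(\N^{T})^{\perp}$ holds for any matrix regardless of rank.
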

\begin{proof}
For each $x=[\vector(X_{j})]_{j=1}^{\ell}\in\R^{d}$, Theorem~\ref{thm:ctree}
says that there exists a $Z$ satisfying $\P\,\vector(Z)=x$ if and
only if $\N x=0$. Equivalently, $x\in\mathrm{span}(\P)$ if and only
if $x\perp\mathrm{span}(\N^{T})$. The ``only if'' part implies
(i), while the ``if'' part implies (ii).
\end{proof}

Define the $m\times\frac{1}{2}n(n+1)$ matrix $\M$ as the vectorization
of the linear constraints in (\ref{eq:SDP}), as in
\[
\M\,\vector(X)=\begin{bmatrix}\vector(A_{1})^{T}\\
\vdots\\
\vector(A_{m})^{T}
\end{bmatrix}\vector(X)=\begin{bmatrix}\vector(A_{1})^{T}\vector(X)\\
\vdots\\
\vector(A_{m})^{T}\vector(X)
\end{bmatrix}=\begin{bmatrix}A_{1}\bullet X\\
\vdots\\
A_{m}\bullet X
\end{bmatrix}.
\]
In reformulating (\ref{eq:SDP}) into (\ref{eq:CTC}), the splitting
conditions (\ref{eq:split_constr}) can be rewritten as the following
\begin{align}
c^{T}\P & =\vector(C)^{T}, & \A\P & =\M,\label{eq:split_constr2}
\end{align}
where $c=[\vector(C_{j})]_{j=1}^{\ell}$ and $\A=[\vector(A_{i,j})^{T}]_{i,j=1}^{m,\ell}$
are the data for the vectorized verison of (\ref{eq:CTC}).

\begin{proof}[Lemma~\ref{lem:lin_indep}]We will prove that
\[
\text{exists }[u;v]\ne0,\quad\A^{T}u+\N^{T}v=0\qquad\iff\qquad\text{exists }y\ne0,\quad\sum_{i=1}^{m}y_{i}A_{i}=0.
\]
\textbf{($\implies$)} We must have $u\ne0$, because $\N$ has full
row rank by Lemma~\ref{lem:row-rank}, and so $\A^{T}0+\N^{T}v=0$
if and only if $v=0$. Multiplying by $\P$ yields $\P^{T}(\A^{T}u+\N^{T}v)=\M^{T}u+0=0$
and so setting $y=u\ne0$ yields $\M^{T}y=0$.\textbf{ ($\impliedby$)
}We use Lemma~\ref{lem:orth} to decompose $\A^{T}y=\P z+\N^{T}v$.
If $\vector(\sum_{i}y_{i}A_{i})=\M^{T}y=\P^{T}\A^{T}y=0,$ then $\P^{T}\P z=\P^{T}\A^{T}y-\P^{T}\N^{T}v=0$
and so $\P z=0$. Setting $u=-y\ne0$ yields $\A^{T}u+\N^{T}v=0$.\qed \end{proof}

\begin{proof}[Lemma~\ref{lem:slaterp}]We will prove that 
\[
\text{exists }x\in\Int(\K),\quad\begin{bmatrix}\A\\
\N
\end{bmatrix}x=\begin{bmatrix}b\\
0
\end{bmatrix}\qquad\iff\qquad\text{exists }X\succeq0,\quad\M\,\vector(X)=b.
\]
Define the chordal completion $F$ as in (\ref{eq:chordal_compl}).
Observe that $\M\,\vector(X)=\M\,\vector(Z)$ holds for all pairs
of $P_{F}(X)=Z$, because each $A_{i}\in\S_{F}^{n}$ satisfies $A_{i}\bullet X=A_{i}\bullet P_{F}(X)$.
Additionally, the positive definite version of Theorem~\ref{thm:prim_sep}
is written
\begin{gather}
\text{exists }X\succ0,\quad P_{F}(X)=Z\qquad\iff\qquad\P\,\vector(Z)\in\Int(\K).\label{eq:prim_sep}
\end{gather}
This result was first established by Grone et al.~\cite{grone1984positive};
a succinct proof can be found in~\cite[Theorem~10.1]{vandenberghe2015chordal}.\textbf{
($\implies$)} For every $x$ satisfying $\N x=0$, there exists $Z$
such that $\P\,\vector(Z)=x$ due to Lemma~\ref{lem:orth}. If additionally
$x\in\Int(\K)$, then there exists $X\succ0$ satisfying $Z=P_{F}(X)$
due to (\ref{eq:prim_sep}). We can verify that $\M\,\vector(X)=\M\,\vector(Z)=\A\P\,\vector(Z)=\A x=b$.\textbf{
($\impliedby$)} For every $X\succ0,$ there exists $Z$ satisfying
$Z=P_{F}(X)$ and $\P\,\vector(Z)\in\Int(\K)$ due to (\ref{eq:prim_sep}).
Set $u=\P\,\vector(Z)$ and observe that $u\in\Int(\K)$ and $\N u=\N\P\,\vector(Z)=0$.
If additionally $\M\,\vector(X)=b$, then $\A u=\A\P\,\vector(Z)=\M\,\vector(Z)=b$.\qed \end{proof}

\begin{proof}[Lemma~\ref{lem:slaterp}]We will prove that
\[
\text{exists }u,v,\quad c-\A^{T}u-\N^{T}v\in\Int(\K_{*})\qquad\iff\qquad\text{exists }y,\quad C-\sum_{i}y_{i}A_{i}\succ0.
\]
Define the chordal completion $F$ as in (\ref{eq:chordal_compl}).
Theorem~\ref{thm:prim_sep} in (\ref{eq:prim_sep}) has a dual theorem
\begin{gather}
\text{exists }S\succ0,\quad S\in\S_{F}^{n}\qquad\iff\qquad\text{exists }h\in\Int(\K_{*}),\quad\vector(S)=\P^{T}h.\label{eq:dual_sep}
\end{gather}
This result readily follows from the positive semidefinite version
proved by Alger et al.~\cite{agler1988positive}; see also~\cite[Theorem~9.2]{vandenberghe2015chordal}.\textbf{
($\implies$)} For each $h=c-\A^{T}u-\N^{T}v$, define $S=C-\sum_{i}u_{i}A_{i}$
and observe that 
\[
\P^{T}h=\P^{T}(c-\A^{T}u-\N^{T}v)=\vector(C)-\M^{T}u-0=\vector(S).
\]
If additionally $h\in\K_{*}$, then $S\succ0$ due to (\ref{eq:dual_sep}).\textbf{
($\impliedby$)} For each $S=C-\sum_{i}y_{i}A_{i}\succ0$, there exists
an $h\in\Int(\K_{*})$ satisfying $\vector(S)=\P^{T}h$ due to (\ref{eq:dual_sep}).
We use Lemma~\ref{lem:orth} to decompose $h=\P u+\N^{T}v$. Given
that $\vector(S)=\P^{T}h=\P^{T}\P u+0$, we must actually have $\P u=c-\A^{T}y$
since $\P^{T}(c-\A^{T}y)=\vector(C)-\M^{T}y=\vector(S)$. Hence $h=c-\A^{T}y+\N^{T}v$
and $h\in\Int(\K_{*})$.\qed \end{proof}

\section{\label{sec:inequality}Extension to inequality constraints}

Consider the modifying the equality constraint in (\ref{eq:vecCTC})
into an inequality constraint, as in
\[
\text{minimize }c^{T}x\quad\text{subject to }\quad\A x\le b,\quad\N x=0,\quad x\in\K.
\]
The corresponding dualization reads
\[
\text{maximize }-c^{T}y\quad\text{subject to }\quad\begin{bmatrix}\A\\
\N\\
-I
\end{bmatrix}y+s=\begin{bmatrix}b\\
0\\
0
\end{bmatrix},\quad s\in\R_{+}^{m}\times\{0\}^{f}\times\K,
\]
where $m$ denotes the number of rows in $\A$ and $f$ now denotes
the number of rows in $\N$. Embedding the equality constraint into
a second-order cone, the associated normal equation takes the form
\[
\left(\D_{s}+\A^{T}\D_{l}\A+\N^{T}\D_{f}\N\right)\Delta y=r,
\]
where $\D_{s}$ and $\D_{f}$ are comparable as before in (\ref{eq:scal})
and (\ref{eq:scal_f}), and $\D_{l}$ is a diagonal matrix with positive
diagonal elements. This matrix has the same sparse-plus-rank-1 structure
as (\ref{eq:Hxr_dual}), and can therefore be solved using the same
rank-1 update 
\[
\Delta y=(\H+\q\q^{T})^{-1}r=\left(I-\frac{(\H^{-1}\q)\q^{T}}{1+\q^{T}(\H^{-1}\q)}\right)\H^{-1}r,
\]
where $\H$ and $\q$ now read
\begin{align*}
\H & =\D_{s}+\A^{T}\D_{l}\A+\sigma\N^{T}\N, & \q & =\N^{T}w.
\end{align*}
The matrix $\H$ has the same block sparsity graph as the tree graph
$T$, so we can evoke Lemma~\ref{lem:normal} to show that the cost
of computing $\Delta y$ is again $O(\omega^{6}n)$ time and $O(\omega^{4}n)$
memory. 

\section{\label{subsec:complexity}Interior-point method complexity analysis}

We solve the dualized problem (\ref{eq:dualCTC-2}) by solving its
extended homogeneous self-dual embedding\begin{subequations}\label{eq:hsd}
\begin{alignat}{2}
 & \text{min.}\quad & (\nu+1)\theta\\
 & \text{s.t.} & \begin{bmatrix}0 & +\M^{T} & -\c & -r_{d}\\
-\M & 0 & +\b & -r_{p}\\
+\c^{T} & -\b^{T} & 0 & -r_{c}\\
r_{d}^{T} & r_{p}^{T} & r_{c} & 0
\end{bmatrix}\begin{bmatrix}x\\
y\\
\tau\\
\theta
\end{bmatrix}+\begin{bmatrix}s\\
0\\
\kappa\\
0
\end{bmatrix} & =\begin{bmatrix}0\\
0\\
0\\
\nu+1
\end{bmatrix}\label{eq:hsdfeas}\\
 &  & x,s\in\C\quad\kappa,\tau & \ge0,\label{eq:hsdcone}
\end{alignat}
where the data is given in standard form
\begin{gather}
\M=\begin{bmatrix}0 & -\begin{bmatrix}\A^{T} & \N^{T}\end{bmatrix} & +I\end{bmatrix},\quad\c^{T}=\begin{bmatrix}0 & \begin{bmatrix}b^{T} & 0\end{bmatrix} & 0\end{bmatrix},\quad\b=-c,\\
\C=\{(x_{0},x_{1}\}\in\R^{p+1}:\|x_{1}\|\le x_{0}\}\times\K,
\end{gather}
and the residual vectors are defined
\begin{equation}
r_{d}=\one_{\C}-\c,\quad r_{p}=-\M\one_{\C}+\b,\quad r_{c}=1+\c^{T}\one_{\C}.
\end{equation}
\end{subequations}Here, $\nu$ is the order of the cone $\C$, and
$\one_{\C}$ is its identity element
\begin{align}
\nu & =1+|J_{1}|+\cdots+|J_{\ell}|, & \one_{\C} & =[1;0;\ldots;0;\vector(I_{|J_{1}|});\ldots;\vector(I_{|J_{\ell}|})].
\end{align}
Problem (\ref{eq:hsd}) has optimal value $\theta^{\star}=0$. Under
the primal-dual Slater's conditions (Assumption~\ref{ass:slater}),
an interior-point method is guaranteed to converge to an $\epsilon$-accurate
solution with $\tau>0$, and this yields an $\epsilon$-feasible and
$\epsilon$-accurate solution to the dualized problem (\ref{eq:dualCTC-2})
by rescaling $x/\tau$ and $y=y/\tau$ and $s=s/\tau$. The following
result is adapted from~\cite[Lemma~5.7.2]{de2000self} and \cite[Theorem~22.7]{vanderbei2015linear}.
\begin{lemma}[$\epsilon$-accurate and $\epsilon$-feasible]
\label{lem:eps}If $(x,y,s,\tau,\theta,\kappa)$ satisfies (\ref{eq:hsdfeas})
and (\ref{eq:hsdcone}) and 
\begin{align*}
\mu=\frac{x^{T}s+\tau\kappa}{\nu+1} & \le\epsilon, & \tau\kappa\ge & \gamma\mu
\end{align*}
for constants $\epsilon,\gamma>0$, then the rescaled point $(x/\tau,y/\tau,s/\tau)$
satisfies
\begin{align*}
\|\M(x/\tau)-\b\| & \le K\epsilon, & \|\M^{T}(y/\tau)+(s/\tau)-\c\| & \le K\epsilon, & \frac{(x/\tau)^{T}(s/\tau)}{\nu+1}\le K\epsilon
\end{align*}
where $K$ is a constant.
\end{lemma}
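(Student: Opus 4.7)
The plan is to reduce the lemma to three facts: a free identity from the skew-symmetry of the HSD coefficient matrix, a bound on $\tau$ from below in terms of a strictly complementary HSD optimum, and then routine division by $\tau$ to produce the three claimed inequalities.

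First I would observe that the coefficient matrix in (\ref{eq:hsdfeas}), call it $Q$, satisfies $Q^{T}=-Q$ block by block. Writing the HSD feasibility as $Qz+w=e$ with $z=(x,y,\tau,\theta)$, $w=(s,0,\kappa,0)$, and $e=(0,0,0,\nu+1)$, and taking the inner product with $z$, the skew-symmetry kills $z^{T}Qz$ and leaves $x^{T}s+\tau\kappa=(\nu+1)\theta$. Combined with the definition of $\mu$, this yields the key identity $\theta=\mu\le\epsilon$. Reading off rows~1 and 2 of (\ref{eq:hsdfeas}) and dividing through by $\tau$ then gives
\[
\M(x/\tau)-\b=-r_{p}(\theta/\tau),\qquad \M^{T}(y/\tau)+(s/\tau)-\c=r_{d}(\theta/\tau),
\]
so both feasibility residuals are bounded by $\max\{\|r_{d}\|,\|r_{p}\|\}\cdot(\theta/\tau)$, while the duality-gap expression rescales as $(x/\tau)^{T}(s/\tau)/(\nu+1)\le\mu/\tau^{2}$. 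Thus the whole lemma reduces to showing that $\tau$ admits a data-dependent positive lower bound.

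The hard step is this bound on $\tau$ from below, and it is where Assumption~\ref{ass:slater} must be invoked. By Lemmas~\ref{lem:slaterp}--\ref{lem:slaterd}, Slater transfers from (\ref{eq:SDP}) to the dualized problem, and standard HSD theory (Ye, Todd, Mizuno~\cite{ye1994homogeneous}; de Klerk et al.~\cite{de2000self}) then guarantees a strictly complementary optimal solution $z^{\star}=(x^{\star},y^{\star},\tau^{\star},0)$, $w^{\star}=(s^{\star},0,\kappa^{\star},0)$ of (\ref{eq:hsd}) with $\tau^{\star}>0$ (and hence $\kappa^{\star}=0$). Applying skew-symmetry to the \emph{difference} $z-z^{\star}$ gives the bilinear identity $z^{T}w+(z^{\star})^{T}w^{\star}=z^{T}w^{\star}+(z^{\star})^{T}w$. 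The left side equals $(\nu+1)\mu+0$, while on the right every term is nonnegative because $x,s^{\star},s,x^{\star}\in\K$ and $\tau,\tau^{\star},\kappa,\kappa^{\star}\ge 0$. In particular the summand $\tau^{\star}\kappa$ is at most $(\nu+1)\mu$, giving $\kappa\le(\nu+1)\mu/\tau^{\star}$. Combining with the hypothesis $\tau\kappa\ge\gamma\mu$ yields the $\mu$-independent bound
\[
\tau\;\ge\;\frac{\gamma\mu}{\kappa}\;\ge\;\frac{\gamma\tau^{\star}}{\nu+1}\;\equiv\;\tau_{\min}>0.
\]

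Finally I would plug $\tau\ge\tau_{\min}$ and $\theta\le\epsilon$ back into the three inequalities from the first step and define $K$ as any constant majorizing the resulting coefficients, e.g.\ $K=(1+\max\{\|r_{p}\|,\|r_{d}\|\})\cdot(\nu+1)^{2}/(\gamma\tau^{\star})^{2}$. The main obstacle is purely the lower bound on $\tau$; once strict complementarity of the HSD is granted by Slater, the rest is bookkeeping on the three rescaled residuals.
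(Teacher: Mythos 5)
Your proposal is correct and follows essentially the same route as the paper: the skew-symmetry identity giving $\theta=\mu$, the bilinear identity against an optimal HSD point yielding $\tau^{\star}\kappa\le(\nu+1)\mu$, and the hypothesis $\tau\kappa\ge\gamma\mu$ combining to give $\tau\ge\gamma\tau^{\star}/(\nu+1)$. The only cosmetic difference is how $\tau^{\star}>0$ is certified: you appeal to strict complementarity of the HSD (which can fail for general SDPs and is more than you need), whereas the paper explicitly builds an HSD optimum with $\tau^{\star}>0$ from a zero-gap solution of the converted problem guaranteed by Lemmas~\ref{lem:slaterp}--\ref{lem:slaterd}; since your argument only uses $\tau^{\star}>0$ and cone-nonnegativity of the cross terms, it goes through all the same.
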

\begin{proof}
Note that (\ref{eq:hsdfeas}) implies $\mu=\theta$ and
\begin{align*}
\|\M(x/\tau)-\b\| & =\frac{\|r_{p}\|\mu}{\tau}, & \|\M^{T}(y/\tau)+(s/\tau)-\c\| & =\frac{\|r_{d}\|\mu}{\tau}, & \frac{(x/\tau)^{T}(s/\tau)}{\nu+1}=\frac{\mu}{\tau^{2}}.
\end{align*}
Hence, we obtain our desired result by upper-bounding $1/\tau$. Let
$(x^{\star},y^{\star},s^{\star},\tau^{\star},\theta^{\star},\kappa^{\star})$
be a solution of (\ref{eq:hsd}), and note that for every $(x,y,s,\tau,\theta,\kappa)$
satisfying (\ref{eq:hsdfeas}) and (\ref{eq:hsdcone}), we have the
following via the skew-symmetry of (\ref{eq:hsdfeas})
\[
(x-x^{\star})^{T}(s-s^{\star})+(\tau-\tau^{\star})(\kappa-\kappa^{\star})=0.
\]
Rearranging yields
\[
(\nu+1)\mu=x^{T}s+\tau\kappa=(x^{\star})^{T}s+x^{T}(s^{\star})+\tau\kappa^{\star}+\tau^{\star}\kappa\ge\tau^{\star}\kappa
\]
and hence
\[
\kappa\tau^{\star}\le\mu(\nu+1),\quad\tau\kappa\ge\gamma\mu\qquad\implies\qquad\tau\ge\frac{\gamma}{\nu+1}\tau^{\star}.
\]

If (\ref{eq:SDP}) satisfies the primal-dual Slater's condition, then
(\ref{eq:CTC}) also satisfies the primal-dual Slater's condition
(Lemmas~\ref{lem:slaterp} \&~\ref{lem:slaterd}). Therefore, the
vectorized version (\ref{eq:vecCTC}) of (\ref{eq:CTC}) attains a
solution $(\hat{x},\hat{y},\hat{s})$ with $\hat{x}^{T}\hat{s}=0$,
and the following
\[
x^{\star}=\tau^{\star}\begin{bmatrix}\|\hat{y}\|\\
\hat{y}\\
\hat{s}
\end{bmatrix},\quad y^{\star}=\tau^{\star}\hat{x},\quad s^{\star}=\tau^{\star}\begin{bmatrix}0\\
0\\
\hat{x}
\end{bmatrix},\quad\tau^{\star}=\frac{\nu+1}{\|\hat{y}\|+\one_{\K}^{T}\hat{s}+\one_{\K}^{T}\hat{x}+1},
\]
with $\theta^{\star}=\kappa^{\star}=0$ is a solution to (\ref{eq:hsd}).
This proves the following upper-bound 
\[
\frac{1}{\tau}\le K_{\tau}=\frac{1}{\gamma}\cdot\min_{\hat{x},\hat{y},\hat{s}}\{\|\hat{y}\|+\one_{\K}^{T}\hat{s}+\one_{\K}^{T}\hat{x}+1:(\hat{x},\hat{y},\hat{s})\text{ solves (\ref{eq:vecCTC}) with }\hat{x}^{T}\hat{s}=0\}.
\]
Setting $K=\max\{\|r_{p}\|K_{\tau},\|r_{d}\|K_{\tau},K_{\tau}^{2}\}$
yields our desired result. \qed
\end{proof}

We solve the homogeneous self-dual embedding (\ref{eq:hsd}) using
the short-step method of Nesterov and Todd~\cite[Algorithm~6.1]{nesterov1998primal}
(and also Sturm and S.~Zhang~\cite[Section~5.1]{sturm1999symmetric}),
noting that SeDuMi reduces to it in the worst case; see~\cite{sturm2002implementation}
and~\cite{sturm1997wide}. Beginning at the following strictly feasible,
perfectly centered point
\begin{gather}
\theta^{(0)}=\tau^{(0)}=\kappa^{(0)}=1,\quad y^{(0)}=0,\quad x^{(0)}=s^{(0)}=\one_{\C},\label{eq:hsdinit}
\end{gather}
with barrier parameter $\mu=1$, we take the following steps\begin{subequations}\label{eq:hsd_ipm}
\begin{align}
\mu^{+} & =\left(1-\frac{1}{15\sqrt{\nu+1}}\right)\cdot\frac{x^{T}s+\tau\kappa}{\nu+1},\\
(x^{+},y^{+},s^{+},\tau^{+},\theta^{+},\kappa^{+}) & =(x,y,s,\tau,\theta,\kappa)+(\Delta x,\Delta y,\Delta s,\Delta\tau,\Delta\theta,\Delta\kappa).\nonumber 
\end{align}
\end{subequations}along the search direction defined by the linear
system~\cite[Eqn. 9]{todd1998nesterov} \begin{subequations}\label{eq:hsd_nt}
\begin{align}
\begin{bmatrix}0 & +\M^{T} & -\c & -r_{p}\\
-\M & 0 & +\b & -r_{d}\\
+\c^{T} & -\b^{T} & 0 & -r_{c}\\
+r_{p}^{T} & +r_{d}^{T} & +r_{c} & 0
\end{bmatrix}\begin{bmatrix}\Delta x\\
\Delta y\\
\Delta\tau\\
\Delta\theta
\end{bmatrix}+\begin{bmatrix}\Delta s\\
0\\
\Delta\kappa\\
0
\end{bmatrix} & =0,\\
s+\Delta s+\nabla^{2}F(w)\Delta x+\mu^{+}\nabla F(x) & =0,\\
\kappa+\Delta\kappa+(\kappa/\tau)\Delta\tau-\mu^{+}\tau^{-1} & =0.
\end{align}
\end{subequations}Here, $F$ is the usual self-concordant barrier
function on $\C$
\begin{align}
F([x_{0};x_{1};\vector(X_{1});\ldots;\vector(X_{\ell})])= & -\log\left(\frac{1}{2}x_{0}^{2}-\frac{1}{2}x_{1}^{T}x_{1}\right)-\sum_{j=1}^{\ell}\log\det(X_{j})\label{eq:Fdef}
\end{align}
and $w\in\Int(\C)$ is the unique \emph{scaling point} satisfying
$\nabla^{2}F(w)x=s$, which can be computed from $x$ and $s$ in
closed-form. The following iteration bound is an immediate consequence
of~\cite[Theorem~6.4]{nesterov1998primal}; see also~\cite[Theorem~5.1]{sturm1999symmetric}.
\begin{lemma}[Short-Step Method]
\label{lem:short-step}The sequence in (\ref{eq:hsd_ipm}) arrives
at an iterate $(x,y,s,\tau,\theta,\kappa)$ satisfying the conditions
of Lemma~\ref{lem:eps} with $\gamma=9/10$ in at most $O(\sqrt{\nu}\log(1/\epsilon))$
iterations. 
\end{lemma}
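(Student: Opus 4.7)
\textbf{Proof proposal for Lemma~\ref{lem:short-step}.}
The plan is to recognize the homogeneous self-dual embedding~(\ref{eq:hsd}) as a conic linear program over the symmetric cone $\C \times \R_{+}$ whose barrier $\widetilde{F}(x,\tau) = F(x) - \log\tau$ is $(\nu+1)$-self-concordant, and then to invoke the generic short-step convergence theory of Nesterov and Todd~\cite[Theorem~6.4]{nesterov1998primal} essentially verbatim. First I would verify that the Newton direction defined by~(\ref{eq:hsd_nt}) coincides with the primal-dual scaled Newton direction associated with the Nesterov--Todd scaling point $w \in \Int(\C)$ satisfying $\nabla^2 F(w)x = s$; this is the content of~\cite[Eqn.~9]{todd1998nesterov}, so no new calculation is required. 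Second I would check that the initial point~(\ref{eq:hsdinit}) is \emph{perfectly centered}, in the sense that $x^{(0)} = s^{(0)} = \one_\C$ and $\tau^{(0)} = \kappa^{(0)} = 1$ jointly satisfy $s = -\mu \nabla F(x)$ and $\kappa = \mu/\tau$ at $\mu = 1$; this is immediate from the definition of $F$ in~(\ref{eq:Fdef}) and the fact that $\one_\C$ is the self-dual identity of $\C$.

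With centering and Newton direction in hand, the workhorse is the standard proximity analysis for self-scaled barriers. Define the proximity measure
\[
\delta(x,s,\tau,\kappa;\mu) = \left\| \mu^{-1/2}\nabla^2 F(w)^{1/2}\bigl(s + \mu \nabla F(x)\bigr) \right\|^2 + \mu^{-1}\bigl(\kappa\tau - \mu\bigr)^2/\tau^2,
\]
and invoke~\cite[Theorem~6.1]{nesterov1998primal} (or equivalently~\cite[Lemma~5.1]{sturm1999symmetric}), which shows that a full Newton step on the scaled system~(\ref{eq:hsd_nt}) reduces the proximity by a quadratic factor whenever $\delta \le 1/2$. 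The choice of step $\mu^{+} = (1 - 1/(15\sqrt{\nu+1}))\mu$ is precisely the one calibrated so that the ``$\mu$-update'' perturbation to the proximity is dominated by the quadratic Newton contraction, keeping the iterate inside the neighborhood $\{\delta \le 1/2\}$ at every step. Inside this neighborhood one has $\tau\kappa \ge (9/10)\mu$; this is a direct consequence of Cauchy--Schwarz applied to the second summand of $\delta$.

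Third, I would iterate the $\mu$-recurrence: starting from $\mu^{(0)} = 1$, after $k$ iterations
\[
\mu^{(k)} = \left(1 - \frac{1}{15\sqrt{\nu+1}}\right)^{k}.
\]
Taking logarithms, $\mu^{(k)} \le \epsilon$ whenever $k \ge 15\sqrt{\nu+1}\,\log(1/\epsilon)$, which is the claimed $O(\sqrt{\nu}\log(1/\epsilon))$ bound. Combined with the neighborhood bound $\tau\kappa \ge (9/10)\mu$, the iterate satisfies the hypotheses of Lemma~\ref{lem:eps} with $\gamma = 9/10$, which completes the argument.

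The only nontrivial obstacle is verifying that the paper's convention for the scaling point $w$ and the augmented cone $\C \times \R_{+}$ matches the one used by Nesterov and Todd, and in particular that the self-concordance parameter of $\widetilde{F}$ on the product is indeed $\nu+1$ rather than $\nu$. This is a bookkeeping exercise resolved by observing that the logarithmic barrier $-\log\tau$ contributes parameter $1$ and adds additively to the parameter $\nu$ of $F$; once this is in place, the entire Nesterov--Todd machinery applies off the shelf and no further estimates are needed.
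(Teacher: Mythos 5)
Your proposal follows essentially the same route as the paper, which proves this lemma purely by citation: it observes that the initial point (\ref{eq:hsdinit}) is strictly feasible and perfectly centered, that the direction (\ref{eq:hsd_nt}) is the Nesterov--Todd direction, and then invokes \cite[Theorem~6.4]{nesterov1998primal} (equivalently \cite[Theorem~5.1]{sturm1999symmetric}) to get the $O(\sqrt{\nu}\log(1/\epsilon))$ bound with $\tau\kappa\ge\gamma\mu$ inside the neighborhood. Your expansion of that citation is sound in structure; the only blemish is the ad hoc normalization of the $(\tau,\kappa)$ term in your proximity measure (the denominator should involve $\tau\kappa$ rather than $\tau^{2}$ for the neighborhood bound to yield $\tau\kappa\ge\gamma\mu$ directly), but since both you and the paper ultimately defer the constant-chasing to the cited theorems, this does not affect the argument.
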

The cost of each interior-point iteration is dominated by the cost
of computing the search direction in (\ref{eq:hsd_nt}). Using elementary
but tedious linear algebra, we can show that if\begin{subequations}\label{eq:newt}
\begin{equation}
(\M\D^{-1}\M^{T})\begin{bmatrix}v_{1} & v_{2} & v_{3}\end{bmatrix}=\begin{bmatrix}0 & -\b & r_{p}\end{bmatrix}-\M\D^{-1}\begin{bmatrix}d & \c & r_{d}\end{bmatrix}\label{eq:hsd_nrm}
\end{equation}
where $\D=\nabla^{2}F(w)$ and $d=-s-\mu^{+}\nabla F(x)$, and 
\begin{equation}
\begin{bmatrix}u_{1} & u_{2} & u_{3}\end{bmatrix}=\D^{-1}(\begin{bmatrix}d & c & r_{d}\end{bmatrix}+\M^{T}\begin{bmatrix}v_{1} & v_{2} & v_{3}\end{bmatrix}),\label{eq:hsd_back1}
\end{equation}
then
\begin{align}
\left(\begin{bmatrix}-\D_{0} & -r_{c}\\
r_{c} & 0
\end{bmatrix}-\begin{bmatrix}\c & r_{d}\\
-\b & r_{p}
\end{bmatrix}^{T}\begin{bmatrix}u_{2} & u_{3}\\
v_{2} & v_{3}
\end{bmatrix}\right)\begin{bmatrix}\Delta\tau\\
\Delta\theta
\end{bmatrix} & =\begin{bmatrix}-d_{0}\\
0
\end{bmatrix}-\begin{bmatrix}\c & r_{d}\\
-\b & r_{p}
\end{bmatrix}^{T}\begin{bmatrix}u_{1}\\
v_{1}
\end{bmatrix},\\
\begin{bmatrix}\Delta x\\
\Delta y
\end{bmatrix} & =\begin{bmatrix}u_{1}\\
v_{1}
\end{bmatrix}-\begin{bmatrix}u_{1} & u_{2}\\
v_{1} & v_{2}
\end{bmatrix}\begin{bmatrix}\Delta\tau\\
\Delta\theta
\end{bmatrix},\\
\Delta s & =d-\D\Delta x,\\
\Delta\kappa & =d_{0}-\D_{0}\Delta\tau,\label{eq:hsd_back2}
\end{align}
\end{subequations}where $\D_{0}=\kappa/\tau$ and $d_{0}=-\kappa+\mu^{+}\tau^{-1}$.
Hence, the cost of computing the search direction is dominated by
the cost of solving the normal equation for three different right-hand
sides. Here, the normal matrix is written

\[
\M\D^{-1}\M^{T}=\diag(W_{1}\otimes_{s}W_{1},\ldots,W_{\ell}\otimes_{s}W_{\ell})+\begin{bmatrix}\A\\
\N
\end{bmatrix}^{T}(w_{1}w_{1}^{T}+\sigma I)\begin{bmatrix}\A\\
\N
\end{bmatrix},
\]
where $\sigma=\frac{1}{2}(w_{0}^{2}-w_{1}^{T}w_{1})>0$ and $\otimes_{s}$
denotes the symmetric Kronecker product~\cite{alizadeh1998primal}
implicitly defined to satisfy
\[
(A\otimes_{s}B)\vector(X)=\frac{1}{2}\vector(AXB^{T}+BXA^{T})\qquad\text{for all }X=X^{T}.
\]
Under the hypothesis on $\A$ stated in Theorem~\ref{thm:lintime},
the normal matrix satisfies the assumptions of Lemma~\ref{lem:normal},
and can therefore be solved in linear $O(n)$ time and memory. 

\begin{proof}[Theorem~\ref{thm:lintime}]Combining Lemma~\ref{lem:eps}
and Lemma~\ref{lem:short-step} shows that the desired $\epsilon$-accurate,
$\epsilon$-feasible iterate is obtained after $O(\sqrt{\nu}\log(1/\epsilon))$
interior-point iterations. At each iteration we perform the following
steps: 1) compute the scaling point $w$; 2) solve the normal equation
(\ref{eq:hsd_nrm}) for three right-hand sides; 3) back-substitute
(\ref{eq:hsd_back1})-(\ref{eq:hsd_back2}) for the search direction
and take the step in (\ref{eq:hsd_ipm}). Note from the proof of Lemma~\ref{lem:normal}
that the matrix $[\A;\N]$ has at most $O(\omega^{2}n)$ rows under
Assumption~\ref{ass:lin}, and therefore $\nnz(\M)=O(\omega^{4}n)$
under the hypothesis of Theorem~\ref{thm:lintime}. Below, we show
that the cost of each step is bounded by $O(\omega^{6}n)$ time and
$O(\omega^{4}n)$ memory.

\textbf{Scaling point.} We partition $x=[x_{0};x_{1};\vector(X_{1});\ldots;\vector(X_{\ell})]$
and similarly for $s$. Then, the scaling point $w$ is given in closed-form~\cite[Section~5]{sturm2002implementation}
\begin{gather*}
\begin{bmatrix}u_{0}\\
u_{1}
\end{bmatrix}=2^{-3/4}\begin{bmatrix}1 & 1\\
-s_{1}/\|s_{1}\| & s_{1}/\|s_{1}\|
\end{bmatrix}\begin{bmatrix}(s_{0}-\|s_{1}\|)^{1/2}\\
(s_{0}+\|s_{1}\|)^{1/2}
\end{bmatrix},\quad\begin{bmatrix}v_{0}\\
v_{1}
\end{bmatrix}=\begin{bmatrix}u_{0} & u_{1}^{T}\\
u_{1} & \frac{1}{2}(u_{0}^{2}-u_{1}^{T}u_{1})I
\end{bmatrix}\begin{bmatrix}x_{0}\\
x_{1}
\end{bmatrix},\\
\begin{bmatrix}w_{0}\\
w_{1}
\end{bmatrix}=2^{-1/4}\begin{bmatrix}u_{0} & u_{1}^{T}\\
u_{1} & \frac{1}{2}(u_{0}^{2}-u_{1}^{T}u_{1})I
\end{bmatrix}\begin{bmatrix}1 & 1\\
-v_{1}/\|v_{1}\| & v_{1}/\|v_{1}\|
\end{bmatrix}\begin{bmatrix}(v_{0}-\|v_{1}\|)^{-1/2}\\
(v_{0}+\|v_{1}\|)^{-1/2}
\end{bmatrix},\\
W_{j}=S_{j}^{1/2}(S_{j}^{1/2}X_{j}S_{j}^{1/2})^{-1/2}S_{j}^{1/2}\qquad\text{for all }j\in\{1,\ldots,\ell\}.
\end{gather*}
Noting that $\nnz(w_{1})\le O(\omega^{2}n)$, $\ell\le n$ and each
$W_{j}$ is at most $\omega\times\omega$, the cost of forming $w=[w_{0};w_{1};\vector(W_{1});\ldots;\vector(W_{\ell})]$
is at most $O(\omega^{3}n)$ time and $O(\omega^{2}n)$ memory. Also,
since
\[
\D=\nabla^{2}F(w)=\diag\left(\begin{bmatrix}w_{0} & w_{1}^{T}\\
w_{1} & \frac{1}{2}(w_{0}^{2}-w_{1}^{T}w_{1})I
\end{bmatrix},W_{1}\otimes_{s}W_{1},\ldots,W_{\ell}\otimes_{s}W_{\ell}\right)^{-1},
\]
the cost of each matrix-vector product with $\D$ and $\D^{-1}$ is
also $O(\omega^{3}n)$ time and $O(\omega^{2}n)$ memory.

\textbf{Normal equation. }The cost of matrix-vector products with
$\M$ and $\M^{T}$ is $\nnz(\M)=O(\omega^{4}n)$ time and memory.
Using Lemma~\ref{lem:normal}, we form the right-hand sides and solve
the three normal equations in (\ref{eq:hsd_nrm}) in $O(\omega^{6}n)$
time and $O(\omega^{4}n)$ memory.

\textbf{Back-substitution.} The cost of back substituting (\ref{eq:hsd_back1})-(\ref{eq:hsd_back2})
and making the step (\ref{eq:hsd_ipm}) is dominated by matrix-vector
products with $\D$, $\D^{-1}$, $\M$, and $\M^{T}$ at $O(\omega^{4}n)$
time and memory.\qed \end{proof}

\bibliographystyle{spiebib}
\bibliography{chordConv2}

\end{document}